\documentclass[preprint,12pt]{article}
\usepackage[top=1.1in, bottom=1.2in, left=0.8in, right=0.8in]{geometry}
\usepackage{amsmath,amsfonts,amssymb,amsthm}
\usepackage{mathrsfs,dsfont}
\usepackage{graphicx}
\usepackage{colortbl,dcolumn}
\usepackage{psfrag}
\usepackage{booktabs}
\usepackage{cite}
\usepackage{url}
\usepackage{comment}
\usepackage{hyperref}
\usepackage{subfigure}
\allowdisplaybreaks[4]
\numberwithin{equation}{section}

\newcommand{\R}{\mathbb{R}}
\newcommand{\N}{\mathbb{N}}
\newcommand{\E}{\mathbb{E}}
\renewcommand{\P}{\mathbb{P}}

\newcommand{\F}{\mathcal{F}}

\newtheorem{theorem}{Theorem}[section]

\newtheorem{lemma}[theorem]{Lemma}
\newtheorem{proposition}[theorem]{Proposition}

\newtheorem{remark}[theorem]{Remark}
\newtheorem{example}[theorem]{Example}
\newtheorem{assumption}[theorem]{Assumption}
\newcounter{algorithm}

\begin{document}


\title{Weak order one convergence of structure-preserving stochastic theta methods for stochastic differential algebraic equations with time-dependent singular matrices\footnotemark[1]}

\footnotetext{\footnotemark[1] Z. Chen is supported by Yunnan Fundamental Research Project (No. 202601AT070161) and  National Natural Science Foundation of China (No. 12201552). L. Chen is supported by National Natural Science Foundation of China (No. 11961029) and Jiangxi Provincial Natural Science Foundation (No. 20242BAB23004).}

\author{Caiyuan Zhu\footnotemark[2],\quad
Ziheng Chen\footnotemark[3],\quad
Lin Chen\footnotemark[4],\quad
Yiwei Zhou\footnotemark[5]}

\footnotetext{\footnotemark[2] School of Mathematics and Statistics, Yunnan University, Kunming, Yunnan, 650500, China. Email: 12024113103@stu.ynu.edu.cn}

\footnotetext{\footnotemark[3] School of Mathematics and Statistics, Yunnan University, Kunming, Yunnan, 650500, China. Email: czh@ynu.edu.cn. Corresponding author}

\footnotetext{\footnotemark[4] School of Statistics and Data Science, Jiangxi University of Finance and Economics, Nanchang, 330013, China. Email: chenlin@jxufe.edu.cn}

\footnotetext{\footnotemark[5] School of Mathematics and Statistics, Yunnan University, Kunming, Yunnan, 650500, China. Email: yiwei.zhou@utexas.edu}

\maketitle

\begin{abstract}
      {\rm\small This paper studies the weak convergence order of structure-preserving stochastic theta methods for a class of index-$1$ stochastic differential algebraic equations with time-dependent singular matrices. The singular matrix is allowed to vary in time but preserves a fixed differential-algebraic splitting, thereby extending the constant singular-matrix setting while retaining the projector structure required for constraint preservation. By exploiting the index-$1$ algebraic-differential decomposition of the exact solution, we establish an abstract weak convergence theorem for constraint-preserving one-step approximations and apply it to the stochastic theta method with $\theta \in (0,1]$. Under global Lipschitz, linear growth, and suitable smoothness assumptions, the considered method is proved to be well posed, to preserve the algebraic constraints at all time levels, and to converge with weak order one. Numerical experiments are finally presented to confirm the structure-preserving property and the theoretical convergence order.} \\

      \textbf{AMS subject classification: } {\rm\small 60H10, 65C30, 65L80, 65C20}\\

      \textbf{Key Words: }{\rm\small} stochastic differential algebraic equations; time-dependent singular matrix; stochastic theta method; constraint preservation; weak convergence order
\end{abstract}

\section{Introduction}

Stochastic differential equations (SDEs) provide a fundamental mathematical framework for modelling random phenomena arising in physics, engineering, biology, finance, and many other areas. However, the state variables in many stochastic systems do not evolve freely in the ambient state space, but are restricted by algebraic constraints imposed by conservation laws, balance relations, circuit laws, or geometric restrictions; see, e.g., \cite{blajer1992index, winkler2003stochastic, romisch2003stochastic, wang2020manifold}. Such systems are naturally described by stochastic differential-algebraic equations (SDAEs), where stochastic differential dynamics and algebraic constraints are coupled in a single formulation \cite{winkler2003stochastic, romisch2003stochastic, chen2025strong, serea2025existence, tsafack2025pathwise, tsafack2025strong, tsafack2026pathwise}. Over the past few decades, the constant singular-matrix setting has served as a standard framework in much of the existing theory for SDAEs \cite{winkler2003stochastic, alabert2006linear}. In this case, the differential and algebraic subspaces are fixed, and the associated projectors can be chosen independently of time. Beyond this setting, the singular matrix itself may vary with time, reflecting time-varying parameters, changing physical coefficients, or evolving constraint structures. Such a dependence may also affect the differential and algebraic subspaces, leading to time-dependent projectors and additional terms in the reduced dynamics; see, e.g., \cite{chen2025strong, serea2025existence, tsafack2025pathwise, tsafack2025strong, tsafack2026pathwise}. As a result, the analysis becomes structurally more delicate than in the constant singular-matrix setting.

In the present paper, we focus on an intermediate but nontrivial case in which the singular matrix is allowed to depend on time, whereas the differential-algebraic splitting remains fixed. This setting extends the constant singular-matrix case, but still preserves a fixed projector structure, which is crucial for the structure-preserving weak convergence analysis developed below. More precisely, we consider the following  SDAEs
\begin{equation}\label{eq:sdae}
      A_{t}\,dX_{t}
      =
      F(t,X_{t})\,dt + G(t,X_{t})\,dW_{t},
      \quad t \in (0,T]
\end{equation}
with initial value $X_{0} \in \R^{d}$. Here $\{W_{t}\}_{t \in [0,T]}$ is an $m$-dimensional standard Brownian motion defined on a complete filtered probability space $(\Omega, \F, \P, \{\F_{t}\}_{t \in [0,T]})$, where the filtration $\{\F_{t}\}_{t \in [0,T]}$ satisfies the usual conditions. The coefficients $F \colon [0,T] \times \R^{d} \to \R^{d}$ and $G \colon [0,T] \times \R^{d} \to \R^{d \times m}$ are given functions, and the matrix $A_{t} \in \R^{d \times d}$ is singular for every $t \in [0,T]$. The singularity of $A_{t}$ is responsible for the appearance of algebraic constraints and distinguishes \eqref{eq:sdae} from standard unconstrained SDEs. Throughout this paper, \eqref{eq:sdae} is assumed to be an index-$1$ SDAE in the sense that the diffusion is compatible with the algebraic constraints, and the algebraic variables are globally uniquely determined by the differential variables; see, e.g., \cite{winkler2003stochastic, serea2025existence}. SDAEs of the form \eqref{eq:sdae} provide a useful framework for modelling constrained stochastic evolution phenomena in which differential dynamics and algebraic restrictions coexist. Typical examples include constrained dynamics in protein folding problems, constrained Brownian motion in nanostructured materials, and transient noise simulation of microelectronic circuits; see, e.g., \cite{neumaier1997molecular, winkler2003stochastic, raccis2011confined}.

Since explicit solutions of SDAEs are rarely available, numerical approximations are indispensable for studying their dynamical behaviour. However, unlike standard unconstrained SDEs, numerical methods for SDAEs must not only approximate the differential dynamics accurately, but also preserve the algebraic constraints at each time step; see, e.g., \cite{oki2023improved}. This constraint-preserving requirement constitutes one of the main structural issues in the numerical analysis of SDAEs. For SDAEs with a constant singular matrix, a number of numerical methods and convergence results have been developed under global Lipschitz or related assumptions \cite{kupper2012rungekutta, qin2019general, sickenberger2009local, winkler2003stochastic}. By contrast, the numerical analysis of SDAEs with time-dependent singular matrices is much less developed, and existing results mainly concern pathwise or strong convergence of numerical approximations \cite{chen2025strong, tsafack2025pathwise, tsafack2025strong, tsafack2026pathwise}. In many applications, however, statistical observables such as mean values and response functionals play a central role, especially in Monte Carlo simulation and uncertainty quantification. Their accurate approximation requires weak convergence analysis, which focuses on errors in expectations of solution functionals; see, e.g., \cite{chen2026weak, milstein2004stochastic, wang2024weak, zhao2025weak}. As far as we know, a systematic weak convergence analysis of structure-preserving methods for SDAEs with time-dependent singular matrices has not yet been established. This motivates us to investigate the weak convergence order of structure-preserving stochastic theta methods for SDAEs \eqref{eq:sdae}.

Given a uniform stepsize $h = \frac{T}{N}, N \in \N$ and the grid $t_{n} := nh, n = 0, 1, \cdots, N$, the stochastic theta method for each $\theta \in (0,1]$ applied to \eqref{eq:sdae} is given by
\begin{equation}\label{eq:theta-scheme}
\left\{\begin{aligned}
      &A_{t_n}Y_{n+1}
      =
      A_{t_n}Y_n
      +
      h\big( (1-\theta)F(t_n,Y_n) + \theta F(t_{n+1},Y_{n+1}) \big)
      +
      G(t_n,Y_n)\Delta{W_n},
      \\
      &Y_{0} = X_{0}
\end{aligned}\right.
\end{equation}
for $n = 0,1,\cdots,N-1$, where $Y_{n}$ denotes the numerical approximation of $X_{t_n}$ and $\Delta{W_n} := W_{t_{n+1}} - W_{t_n}$. It is worth emphasizing that the explicit case $\theta = 0$ is excluded from the present analysis, since the explicit update neither automatically enforces the algebraic constraint at the new time level nor uniquely determines the algebraic component of $Y_{n+1}$ in general; see Remark \ref{rk:choiceoftheta} for more details. Thus the implicit contribution corresponding to $\theta > 0$ is essential for both well-posedness and constraint preservation. A central ingredient of the present work is to exploit the index-$1$ algebraic-differential decomposition at both the continuous and discrete levels, following the framework developed in \cite{chen2025strong}. At the continuous level, the exact solution admits the decomposition $X_{t} = U_{t} + \widehat{V}(t,U_{t})$, where $U_{t} = PX_{t}$ is the differential component and $\widehat{V}$ is the algebraic reconstruction map, so that the original SDAE can be represented through a reduced SDE for $U_{t}$. At the discrete level, we do not discretize the reduced SDE directly; instead, we analyze the stochastic theta method as a scheme formulated at the original SDAE level and extract the induced one-step dynamics of its differential component.  This viewpoint provides the bridge between the constraint-preserving property of the original SDAE-level scheme and the Kolmogorov-function-based weak error analysis for the induced differential one-step approximation.

Since $A_{t_n}$ is singular, the well-posedness of the implicit update and the preservation of the algebraic constraints are not automatic. We address both issues  by exploiting the fixed projector structure and the algebraic reconstruction map: the SDAE-level update is reduced to a fixed-point problem for the differential component, while the algebraic component is recovered by the algebraic reconstruction map so that the new constraint is enforced. This proves  that for sufficiently small stepsizes and for every $\theta \in (0,1]$, the stochastic theta method admits a unique adapted numerical solution and preserves the constraint manifold at each time level; see Theorem~\ref{thm:wp_invariance}. We then turn to the weak convergence order. Since the stochastic theta method is formulated at the original SDAE level, its weak error cannot be analyzed by directly applying the classical SDE theory to the reduced equation. Instead, we introduce the backward Kolmogorov function associated with the reduced SDE and establish an abstract weak convergence theorem for constraint-preserving one-step approximations. This theorem reduces the global weak convergence problem to a one-step weak local error estimate for the induced differential component. For the stochastic theta method \eqref{eq:theta-scheme}, verifying this local weak error estimate is the main technical step. We insert a frozen Euler reference step between the exact reduced flow and the induced differential one-step update. In this way, the local weak error is decomposed into two parts: the weak expansion error of the frozen Euler reference step and the weak contribution arising from the implicit correction. The former is handled by a standard Brownian increment expansion, while the latter is controlled by using the regularity of the Kolmogorov function and suitable increment estimates. Combining this local estimate with the moment bounds of the numerical solution yields weak convergence of order one, as stated in Theorem  \ref{thm:weak_order_one_theta}, namely, for sufficiently smooth test functions $\varphi \colon \R^{d} \to \R$ with polynomial growth,
\begin{align*}
      \big|\E\big[\varphi(X_{T})\big]
      - \E\big[\varphi(Y_{N})\big]\big|
      \leq
      Ch.
\end{align*}

The main contributions of this paper are threefold. First, we establish an abstract weak convergence theorem for constraint-preserving one-step approximations of index-$1$ SDAEs. Second, we prove that the stochastic theta method is well posed and preserves the algebraic constraint manifold for sufficiently small stepsizes and $\theta \in (0,1]$. Third, by combining the  Kolmogorov-function approach with a frozen Euler reference step and an implicit correction estimate, we verify the required local weak error estimate and obtain weak convergence of order one for the stochastic theta method. The remainder of this paper is organized as follows. Section \ref{sec:basicsetting} introduces the index-$1$ SDAE setting, the algebraic reconstruction map, and the reduced SDE for the differential component. In Section \ref{sec:mainbody}, we develop the weak convergence framework and verify its assumptions for the stochastic theta method. Finally, Section \ref{sec:experiments} presents some numerical experiments illustrating both the constraint-preserving property and the weak convergence order.

\section{Index-$1$ SDAE setting and reduced SDE}\label{sec:basicsetting}
\label{sec:setting}

In this section, we introduce the structural assumptions on the index-$1$ SDAEs \eqref{eq:sdae} and derive the associated reduced dynamics for the differential component. To this end, we first fix some notation. The letter $C$ denotes a generic positive constant whose value may change from line to line, but is independent of the stepsize of the numerical method. Let $\langle \cdot,\cdot \rangle$ and $|\cdot|$ denote the Euclidean inner product and norm in $\R^{d}$, respectively. For any matrix $B \in \R^{d \times m}$, $B^{\top}$ denotes its transpose, $|B| := \sqrt{\operatorname{Tr}(B^{\top}B)}$ its Frobenius norm, $\operatorname{Im}(B) := \{Bx : x \in \R^{m}\}$ its image, and $\operatorname{Ker}(B) := \{x \in \R^m : Bx = 0\}$ its kernel. For $k \in \{1,2,\cdots\}$, let $C^{k}(\R^{d};\R)$ denote the class of all real-valued functions on $\R^{d}$ whose partial derivatives up to order $k$ exist and are continuous. We further denote by $C_{\mathrm{pol}}^{k}(\R^{d};\R)$ the class of all functions $\psi \in C^{k}(\R^{d};\R)$ such that for every multi-index $\alpha$ with $|\alpha| \leq k$, the derivative $D^{\alpha}\psi$ has at most polynomial growth.

\subsection{Matrix structure, constraint manifold, and exact solution}
We first impose a structural assumption on $A_{t}$, which allows the singular matrix to vary in time while keeping the differential-algebraic splitting and the associated projectors independent of time.

\begin{assumption}\label{ass:At-structure}
      For any $t \in [0,T]$, the singular value decomposition of matrix $A_t$ takes the form $A_t = M\Sigma_t N$, where $M,N \in \R^{d \times d}$ are orthogonal matrices and $\Sigma_t = \operatorname{diag}\bigl(\sigma_1(t), \cdots, \sigma_r(t), 0, \cdots, 0\bigr)$ is continuously differentiable with $r\in\{1,2,\cdots,d-1\}$. Moreover, there exist constants $\underline{\sigma}$ and $\overline{\sigma}$ such that
      \begin{align*}
            0 < \underline{\sigma} 
            \leq \sigma_i(t) 
            \leq \overline{\sigma} 
            < +\infty,
            \quad i = 1,2,\cdots,r, t \in [0,T].
      \end{align*}
\end{assumption}

Since the Moore--Penrose inverse of any real matrix always exists and is unique, we denote $A_t^{-}$ the unique Moore--Penrose inverse matrix of $A_t$ for each $t \in [0,T]$. It follows from Assumption \ref{ass:At-structure} that $A_t^{-} = N^{\top} \Sigma_t^{-} M^{\top}$ with $$\Sigma_t^{-} = \operatorname{diag}\bigl(\sigma_1^{-1}(t), \cdots, \sigma_r^{-1}(t), 0, \cdots, 0\bigr), \quad t \in [0,T].$$ Now we define the time independent operator
\begin{equation*}
      P := A_t^{-}A_t = N^{\top}\begin{pmatrix}
            I_{r} & 0 \\ 0 & 0_{d-r} \end{pmatrix}N,
\end{equation*}
which is a projector along $\mathrm{Ker}(A_t)$, where $0_{d-r}$ represents the $(d-r) \times (d-r)$ zero matrix. The projector $Q := I-P$ onto $\mathrm{Ker}(A_t)$ is such that $A_tQ = 0$ and the projector $R := I-A_tA_t^{-}$ along $\mathrm{Im}(A_t)$ satisfies $RA_t = 0$. We next impose the index-$1$ structural conditions. 

\begin{assumption}\label{ass:sdae-structure}
      Let the initial value $X_0$ be deterministic and satisfy $RF(0,X_0) = 0$.
      Besides, the algebraic Jacobian $J(t,x) := A_t + R D_xF(t,x)$ is invertible for each $(t,x) \in [0,T] \times \R^{d}$ and there exists a constant $L_{J} > 0$ such that $\sup_{(t,x) \in [0,T] \times \R^{d}} |J(t,x)^{-1}| \leq L_{J}$. Moreover, the diffusion coefficient is compatible with the algebraic constraint, namely,
            \begin{equation*} RG(t,x)=0, \quad (t,x)\in[0,T]\times\mathbb R^d. \end{equation*}
            
\end{assumption}

The following standard growth and Lipschitz conditions guarantee the well-posedness of the associated stochastic dynamics and provide the basic moment estimates needed later.
\begin{assumption}\label{ass:coeff}
      Let the coefficients $F \colon [0,T] \times \R^{d} \to \R^{d}$ and $G \colon [0,T] \times \R^{d} \to \R^{d \times m}$ satisfy the global Lipschitz and linear growth conditions with respect to the space variable, i.e., there exist constants $L, C > 0$ such that for all $t \in [0,T]$ and $x,y \in \R^{d}$,
      \begin{gather*}
            |F(t,x)-F(t,y)| + |G(t,x)-G(t,y)| \leq L|x-y|,
            \\
            |F(t,x)|+|G(t,x)| \leq C(1+|x|).
      \end{gather*}
\end{assumption}

Under the preceding assumptions, the SDAE \eqref{eq:sdae} admits a unique continuous adapted solution, which remains on the algebraic constraint manifold.
\begin{proposition}\label{prop:exact_constraint}
      Suppose that Assumptions \ref{ass:At-structure}, \ref{ass:sdae-structure} and \ref{ass:coeff} hold. Then  \eqref{eq:sdae} admits a unique continuous adapted solution $\{X_{t}\}_{t \in [0,T]}$, which satisfies
      \begin{equation}\label{eq:sdae_integral}
            \int_{0}^{t} A_s\,dX_{s}
            =
            \int_{0}^{t} F(s,X_{s})\,ds
            +
            \int_{0}^{t} G(s,X_{s})\,dW_s,
            \quad t \in [0,T].
      \end{equation}
      Moreover, the exact solution $\{X_{t}\}_{t \in [0,T]}$ satisfies the algebraic constraint
      \begin{equation*}
            X_{t} \in \mathcal{M}_t, \quad t \in [0,T], \quad \mathbb{P}\text{\rm{-a.s.}},
      \end{equation*}
      where $\mathcal{M}_{t} := \{x \in \R^{d} : RF(t,x) = 0\}$ for each $t \in [0,T]$.  
\end{proposition}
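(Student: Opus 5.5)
We reduce \eqref{eq:sdae} to an SDE for the differential component plus an algebraic map, following the decomposition $X_t = U_t + \widehat{V}(t,U_t)$ described in the introduction.

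\emph{Step 1: the algebraic reconstruction map.} Fix $t\in[0,T]$ and $u \in \operatorname{Im}(P)$. We seek a unique $v \in \operatorname{Im}(Q)$ with $RF(t,u+v)=0$. Consider $\Phi_t(u,v) := A_t v + RF(t,u+v)$; since $A_tQ=0$, on $\operatorname{Im}(Q)$ this reduces to $RF(t,u+v)$. The map $\Phi_t(u,\cdot)$ has Jacobian $J(t,u+v)$, which is uniformly invertible with bound $L_J$ by Assumption~\ref{ass:sdae-structure}. A global implicit function theorem (Hadamard--Lévy, or equivalently a global homotopy argument using $|J^{-1}|\le L_J$) then gives a unique solution. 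One must also check that $J$ restricted to $\operatorname{Im}(Q)\to\operatorname{Im}(R)$ is bijective. This follows from the block structure in the $N,M$ coordinates: $J$ maps $\operatorname{Im}(Q)$ into $\operatorname{Im}(R)$ and $\operatorname{Im}(P)$ into $\operatorname{Im}(A_t)\oplus\operatorname{Im}(R)$, and invertibility of $J$ forces the $QR$ block $RD_xF\,Q$ to be invertible with inverse bounded by $L_J$. This yields $\widehat{V}(t,u)$ with $Q\widehat V=\widehat V$. Differentiating $RF(t,u+\widehat V(t,u))=0$ shows that $\widehat V$ is Lipschitz in $u$ with constant $L_JL$, and, provided $F$ is $C^1$ in $t$, continuously differentiable in $t$.

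\emph{Step 2: the reduced SDE.} Multiplying \eqref{eq:sdae} by $A_t^-$ and using $A_t^-A_t=P$ gives, at least formally, $P\,dX_t = A_t^-F(t,X_t)\,dt + A_t^-G(t,X_t)\,dW_t$. Applying $R$ gives $0=RF\,dt+RG\,dW$; since $RG=0$, this leaves the constraint $RF(t,X_t)=0$. Because $P$ is time-independent, the differential $d(PX_t)$ is unambiguous and the $t$-dependence of $A_t$ causes no extra terms. The interpretation of $\int A_s\,dX_s$ requires some care: $A_s\,dX_s = A_sP\,dX_s$, and the integral is understood as $\int A_s\,d(PX_s)$. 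Setting $U_t=PX_t$, we obtain the reduced SDE
\[
dU_t = A_t^-F\bigl(t,U_t+\widehat V(t,U_t)\bigr)\,dt + A_t^-G\bigl(t,U_t+\widehat V(t,U_t)\bigr)\,dW_t,\qquad U_0=PX_0 .
\]
Its coefficients are globally Lipschitz and of linear growth, since $|A_t^-|\le \sqrt r/\underline\sigma$ and $\widehat V$ is Lipschitz with $|\widehat V(t,0)|$ bounded by continuity on $[0,T]$. Classical SDE theory therefore gives a unique continuous adapted solution $U$.

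\emph{Step 3: existence, uniqueness, and the constraint.} Define $X_t := U_t+\widehat V(t,U_t)$. Then $PX_t=U_t$ and $RF(t,X_t)=0$, so $X_t\in\mathcal M_t$. Note $QX_0=\widehat V(0,PX_0)$ by uniqueness in Step 1, since $RF(0,X_0)=0$. Multiplying the reduced SDE by $A_t$ and using $A_tA_t^-=I-R$ together with $RF(t,X_t)=0$ and $RG=0$ recovers \eqref{eq:sdae_integral}. For uniqueness, any solution satisfies the constraint by the $R$-projection argument of Step 2, so $QX_t=\widehat V(t,PX_t)$ by Step 1. Then $PX_t$ solves the reduced SDE, whose pathwise uniqueness gives uniqueness of $X$.

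The main obstacle is Step 1. It requires the global solvability and Lipschitz dependence of $\widehat V$ from the bounded inverse of the full Jacobian $J$, including the precise reduction to the $Q$-to-$R$ block. I would handle it by passing to the $N,M$ coordinates and applying Hadamard's global inverse theorem to $v\mapsto RF(t,u+v)$ on $\operatorname{Im}(Q)$.
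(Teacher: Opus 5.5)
Your proposal is correct, but it proves considerably more than the paper does, and by a different route.

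The paper takes existence and uniqueness as a black box from \cite{serea2025existence} and proves only the constraint. It applies $R$ to \eqref{eq:sdae_integral} and uses $RA_s=0$ and $RG=0$ to get $\int_0^t RF(s,X_s)\,ds=0$ for all $t$. It then concludes $RF(t,X_t)=0$ for a.e.\ $t$ by Lebesgue differentiation, and upgrades this to every $t$ by continuity of paths.

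You instead construct the solution in three stages. You obtain $\widehat V$ from Hadamard's global inverse theorem on the $\operatorname{Im}(Q)\to\operatorname{Im}(R)$ block of $J$. Your block-triangular observation is right: $(I-R)JQ=0$, so $JQ$ maps $\operatorname{Im}(Q)$ injectively onto the equidimensional space $\operatorname{Im}(R)$, with inverse bounded by $L_J$. You then solve the reduced SDE and lift via $X_t=U_t+\widehat V(t,U_t)$. The constraint holds by construction, so the $R$-projection argument is needed only for uniqueness. There your Step~2 states it only formally, though a.e.\ $t$ would suffice in that step anyway.

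Your route has several advantages:
\begin{itemize}
\item It is self-contained.
\item It simultaneously delivers the Lipschitz and growth properties of $\widehat V$ that the paper later imports from \cite{chen2025strong}.
\item Your reading of $\int A_s\,dX_s$ as $\int A_s\,d(PX_s)$ is a useful clarification the paper omits. Under the assumptions of this proposition alone, $X=U+\widehat V(\cdot,U)$ need not be a semimartingale.
\end{itemize}
The paper's route is far shorter but rests entirely on the cited well-posedness result.

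Two simplifications are available. First, you do not need $F$ to be $C^1$ in $t$ here. The inverse bound gives $|\widehat V(t,0)|\le L_J|RF(t,0)|$ directly. Continuity of $F$ in $t$, which the paper also uses tacitly, gives the needed measurability. Second, you can bypass the block reduction altogether. Apply Hadamard to the full map $v\mapsto A_tv+RF(t,u+v)$ on $\R^d$, whose Jacobian is exactly $J$. Applying $I-R$ to its zero equation gives $A_tv=0$, so the unique zero automatically lies in $\ker A_t=\operatorname{Im}(Q)$.
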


\begin{proof}
      The existence of unique solution $\{X_{t}\}_{t \in [0,T]}$ to the index-1 SDAE \eqref{eq:sdae} follows from \cite{serea2025existence}. Multiplying the integral equation \eqref{eq:sdae_integral} by $R$, we obtain
      \begin{equation*}
            R\int_{0}^{t} A_s\,dX_{s}
            =
            \int_{0}^{t} R F(s,X_{s})\,ds
            +
            \int_{0}^{t} R G(s,X_{s})\,dW_s, \quad t \in [0,T].
      \end{equation*}
      It follows from $R A_{s} = 0$ and $R G(s,X_{s}) = 0$ for all $s \in [0,T]$ that there exists a set $\Omega_{0} \in \mathcal{F}$ with $\P(\Omega_0) = 1$ such that for every $\omega \in \Omega_{0}$,
      \begin{equation*}
            \int_{0}^{t} RF(s,X_{s}(\omega))\,ds = 0, \quad t \in [0,T].
      \end{equation*}
      For each fixed $\omega \in \Omega_{0}$, the map
      \begin{equation*}
            t \mapsto \int_{0}^{t} RF(s,X_{s}(\omega))\,ds
      \end{equation*}
      is absolutely continuous and is identically zero on $[0,T]$. Therefore, by the fundamental theorem of calculus for Lebesgue integrals, we obtain $RF(t,X_t(\omega)) = 0$ for almost every $t \in [0,T]$. Since $\{X_{t}\}_{t \in [0,T]}$ has continuous sample paths and the coefficient $F(t,x)$ is continuous in $(t,x)$, the map $t \mapsto RF(t,X_t(\omega))$ is continuous on $[0,T]$. From the fact that a continuous function which is equal to zero almost everywhere on $[0,T]$ must be identically zero, we conclude that for every $\omega \in \Omega_{0}$,
      \begin{equation*}
            RF(t,X_t(\omega))=0, \quad t \in [0,T],
      \end{equation*}
      which implies $\P(X_t \in \mathcal M_t\ \text{for all } t\in[0,T]) = 1$ and thus completes the proof.
\end{proof}

\subsection{Algebraic reconstruction and the reduced SDE}
We now use the index-$1$ structure to eliminate the algebraic component and to obtain the  reduced SDE for the differential component. The key point is that the algebraic equation has a unique solution in the algebraic subspace for each differential variable. This gives rise to the algebraic reconstruction map. More precisely, for every $(t,u) \in [0,T] \times \R^{d}$, the algebraic equation
\begin{equation}\label{eq:Vhat_alg}
      A_t v + R F\bigl(t,u+v\bigr)=0
\end{equation}
admits a unique solution $v = \widehat{V}(t,u) \in \operatorname{Im}(Q)$; see, e.g., \cite{winkler2003stochastic, serea2025existence}. We call $\widehat{V}$ the algebraic reconstruction map. This map represents every constrained state in terms of its differential component and will now be applied to the exact solution of \eqref{eq:sdae}. For every $t \in [0,T]$, setting $U_{t} := PX_{t} \in \operatorname{Im}(P)$ and $V_{t} := QX_{t} \in \operatorname{Im}(Q)$ gives the decomposition $X_{t} = U_{t} + V_{t}$. Together with $RF(t,X_{t}) = 0$ and $V_{t} \in \operatorname{Im}(Q) = \ker(A_t)$ for all $t \in [0,T]$, we obtain
\begin{equation*}
      A_tV_t+RF(t,U_t+V_t)
      =
      A_tQX_t+RF(t,X_t)
      =
      0, \quad t \in [0,T],
\end{equation*}
which means that $V_{t}$ is a solution of the algebraic equation \eqref{eq:Vhat_alg} with $u = U_{t}$. By the uniqueness of the algebraic solution, we have $V_t = \widehat{V}(t,U_t), t \in [0,T]$, and consequently
\begin{equation}\label{eq:X_decomp}
      X_{t} = U_{t} + \widehat{V}(t,U_{t}), \quad t \in [0,T].
\end{equation}

To derive the evolution equation for the differential component, we multiply \eqref{eq:sdae} by $A_t^{-}$ and use $A_t^{-}A_t = P$ to deduce that
\begin{equation*}
      P\,dX_t = A_t^{-}F(t,X_t)\,dt + A_t^{-}G(t,X_t)\,dW_t. 
\end{equation*}
Together with $U_t = P X_t$ and \eqref{eq:X_decomp}, we derive the unconstrained SDE
\begin{equation}\label{eq:sde}
      U_t-U_0
      =
      \int_{0}^{t} A_s^{-}F\big(s,U_s+\widehat{V}(s,U_s)\big)\,ds
      +
      \int_{0}^{t} A_s^{-}G\big(s,U_s+\widehat{V}(s,U_s)\big)\,dW_s,
      \quad t \in [0,T].
\end{equation}
By introducing the reduced coefficients
\begin{equation}\label{eq:coff}
      f(t,u) := A_t^{-}F\big(t,u+\widehat{V}(t,u)\big),
      \quad
      g(t,u) := A_t^{-}G\big(t,u+\widehat{V}(t,u)\big),
      \quad t \in [0,T], u \in \R^{d},
\end{equation}
one can rewrite the reduced SDE \eqref{eq:sde} as follows
\begin{equation}\label{eq:reducedsde}
      U_t-U_0
      =
      \int_{0}^{t} f(s,U_s)\,ds
      +
      \int_{0}^{t} g(s,U_s)\,dW_s,
      \quad t \in [0,T].
\end{equation}
Before studying the well-posedness of \eqref{eq:reducedsde}, we record several results of the matrix structure.

\begin{lemma}\label{lem:A-inv-Lipschitz}
      Suppose that Assumption \ref{ass:At-structure} holds. Then there exists a constant 
      $C_A := \max_{t \in [0,T]} |\Sigma_t'| \in [0, +\infty)$ such that 
      \begin{equation*}
            |A_t| \leq \sqrt r\,\overline{\sigma},
            \quad
            |A_t^{-}| \leq \frac{\sqrt r}{\underline{\sigma}},
            \quad
            |A_t'| \leq C_A, \quad t \in [0,T].              
      \end{equation*}
      Besides, the map $t \mapsto A_t^{-}$ is differentiable, and there exists a constant $C := \frac{C_A}{\underline{\sigma}^{2}} \geq 0$ such that
      \begin{equation*}
            \bigg|\frac{d}{dt} A_t^{-}\bigg| \leq C,
            \quad |A_t^{-}-A_s^{-}| \leq C|t-s|,
            \quad s,t \in [0,T].
      \end{equation*}
\end{lemma}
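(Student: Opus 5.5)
The plan is to reduce every estimate to the diagonal matrices $\Sigma_t$ and $\Sigma_t^{-}$, using that the Frobenius norm is invariant under multiplication by the orthogonal matrices $M$ and $N$. In particular, $|M B N| = |B|$ for every $B\in\R^{d\times d}$, since $\operatorname{Tr}(N^{\top}B^{\top}M^{\top}MBN)=\operatorname{Tr}(B^{\top}B)$ by cyclicity of the trace.

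\textbf{Norm bounds.} First, $|A_t|=|\Sigma_t|=\big(\sum_{i=1}^r\sigma_i(t)^2\big)^{1/2}\le\sqrt r\,\overline{\sigma}$ by Assumption \ref{ass:At-structure}. Similarly, $|A_t^{-}|=|\Sigma_t^{-}|=\big(\sum_{i=1}^r\sigma_i(t)^{-2}\big)^{1/2}\le \sqrt r/\underline{\sigma}$. Because $M,N$ do not depend on $t$, differentiability of $\Sigma_t$ gives $A_t'=M\Sigma_t'N$, so $|A_t'|=|\Sigma_t'|$. The function $t\mapsto|\Sigma_t'|$ is continuous on the compact set $[0,T]$, so its maximum $C_A$ is finite, and hence $|A_t'|\le C_A$.

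\textbf{Derivative of the pseudo-inverse.} Each $\sigma_i$ is $C^1$ and bounded below by $\underline{\sigma}>0$, so $\sigma_i^{-1}$ is $C^1$ with $(\sigma_i^{-1})'=-\sigma_i'/\sigma_i^{2}$. Consequently $\frac{d}{dt}\Sigma_t^{-}=\operatorname{diag}\big(-\sigma_1'/\sigma_1^2,\dots,-\sigma_r'/\sigma_r^2,0,\dots,0\big)$, and
\[
\frac{d}{dt}A_t^{-}=N^{\top}\Big(\frac{d}{dt}\Sigma_t^{-}\Big)M^{\top}.
\]
Taking Frobenius norms and using orthogonal invariance again,
\[
\Big|\frac{d}{dt}A_t^{-}\Big|=\Big(\sum_{i=1}^r\frac{\sigma_i'(t)^2}{\sigma_i(t)^4}\Big)^{1/2}\le\frac{1}{\underline{\sigma}^2}\Big(\sum_{i=1}^r\sigma_i'(t)^2\Big)^{1/2}=\frac{|\Sigma_t'|}{\underline{\sigma}^2}\le \frac{C_A}{\underline{\sigma}^2}.
\]

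\textbf{Lipschitz bound.} This follows from the mean value inequality in integral form, applied entrywise. Since $t\mapsto A_t^{-}$ is $C^1$, we have $A_t^{-}-A_s^{-}=\int_s^t\frac{d}{d\tau}A_\tau^{-}\,d\tau$, and Minkowski's inequality for the Frobenius norm gives $|A_t^{-}-A_s^{-}|\le C|t-s|$ with $C=C_A/\underline{\sigma}^2$.

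There is no real obstacle here. The only points needing care are two facts: the derivative of $\Sigma_t$ lies on the first $r$ diagonal entries (the zero block stays zero), and the time-independence of $M,N$ is what allows differentiating through the factorization.
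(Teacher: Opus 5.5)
Your proposal is correct and follows essentially the same route as the paper: you use orthogonal invariance of the Frobenius norm to reduce the bounds to $\Sigma_t$ and $\Sigma_t^{-}$, compute $\frac{d}{dt}\Sigma_t^{-}$ entrywise, and get the Lipschitz bound from the fundamental theorem of calculus. Your explicit justification of $|MBN| = |B|$ via cyclicity of the trace, and your remark that the time-independence of $M,N$ is what lets you differentiate through the factorization, are small clarifications that the paper leaves implicit.
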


\begin{proof}
      Since $M,N$ are orthogonal, the facts $A_t=M\Sigma_tN$ and $A_t^- = N^\top \Sigma_t^- M^\top$ give that
      \begin{equation*}
            |A_t|=|\Sigma_t|,
            \quad
            |A_t^{-}|=|\Sigma_t^{-}|,
            \quad
            |A_t'|=|\Sigma_t'|,
            \quad t \in [0,T].
      \end{equation*}
      It follows from $\Sigma_t = \operatorname{diag}\bigl(\sigma_1(t), \cdots, \sigma_r(t), 0, \cdots, 0\bigr)$ and 
      $\Sigma_t^{-} = \operatorname{diag}(\sigma_1(t)^{-1},\cdots,\sigma_r(t)^{-1},0,\cdots,0)$ with $r\in\{1,2,\cdots,d-1\}$ that
      \begin{equation*}
            |A_{t}|^{2} = \sum_{i=1}^{r} \sigma_{i}(t)^2 \leq r \overline{\sigma}^{2},
            \quad
            |A_t^{-}|^{2} = \sum_{i=1}^{r} (\sigma_{i}(t))^{-2} \leq \frac{r}{\underline{\sigma}^{2}},  
      \end{equation*}
      which yields $|A_t| \leq \sqrt r\,\overline{\sigma}$ and $|A_t^{-}| \leq \frac{\sqrt r}{\underline{\sigma}}$. Noting that the map $t \mapsto \Sigma_t'$ is continuous on $[0,T]$, one gets $|A_t'| \leq \max_{t \in [0,T]}|\Sigma_t'| =: C_{A} < +\infty$.

      Owing to $\sigma_i \in C^1([0,T];\R)$ and $\sigma_i(t) \geq \underline{\sigma}>0$ for $i = 1,2,\cdots,r, t \in [0,T]$, the map $t\mapsto \sigma_i(t)^{-1}$ belongs to $C^1([0,T];\R)$ with
      \begin{equation*}
            \frac{d}{dt}\sigma_i(t)^{-1}
            =
            -\sigma_i'(t)\sigma_i(t)^{-2},
            \quad i=1,\cdots,r,
      \end{equation*}
      which implies that $t\mapsto \Sigma_t^{-}$ is differentiable and satisfies
      \begin{equation*}
            \frac{d}{dt}\Sigma_t^{-}
            =
            \operatorname{diag}\bigl(
            -\sigma_1'(t)\sigma_1(t)^{-2}, \cdots, -\sigma_r'(t)\sigma_r(t)^{-2}, 0, \cdots, 0
            \bigr).
      \end{equation*}
      Applying $\frac{d}{dt} A_t^{-} = N^\top(\frac{d}{dt}\Sigma_t^{-})M^\top$ yields
      \begin{align*}
            \bigg|\frac{d}{dt} A_t^{-}\bigg|
            =
            \bigg|\frac{d}{dt}\Sigma_t^{-}\bigg| 
            =
            \left(
            \sum_{i=1}^r \frac{|\sigma_i'(t)|^2}{\sigma_i(t)^4}
            \right)^{1/2}
            \leq
            \frac{|\Sigma_t'|}{\underline{\sigma}^{2}}
            \leq
            \frac{C_A}{\underline{\sigma}^{2}}.
      \end{align*}
      Moreover, the fundamental theorem of calculus shows that for any $s,t \in [0,T]$ with $s \leq t$, 
      \begin{align*}
            |A_t^{-}-A_s^{-}|
            =
            \left|\int_s^t \frac{d}{dr} A_r^{-}\,dr\right|
            \leq
            \int_s^t \bigg|\frac{d}{dr} A_r^{-}\bigg|\,dr
            \leq
            C|t-s|.
      \end{align*}
      Thus we complete the proof.
\end{proof}

As a direct consequence of the index-$1$ invertibility condition, one can derive the basic Lipschitz and growth properties of the algebraic reconstruction map $\widehat{V}$; see \cite[Lemma 2.2]{chen2025strong} for its proof.

\begin{lemma}\label{lem:Vhat_basic}
      Suppose that Assumptions \ref{ass:At-structure}, \ref{ass:sdae-structure} and \ref{ass:coeff} hold. Then there exists a constant $L_V > 0$ such that
      \begin{equation}\label{eq:Vhat-Lip-u}
            \big|\widehat{V}(t,u_{1}) - \widehat{V}(t,u_{2})\big| \leq L_{V}\,|u_{1} - u_{2}|,
            \quad t \in [0,T], u_{1}, u_{2} \in \R^{d}.
      \end{equation}
      Moreover, there exists a constant $C > 0$ such that
      \begin{gather}
            \label{eq:Vhat-Lip-t}
            \bigl|\widehat{V}(t,u)-\widehat{V}(s,u)\bigr| \leq C(1+|u|)|t-s|,
            \quad s, t \in [0,T], u \in \R^{d},
            \\
            \label{eq:Vhat-linear-growth}
            |\widehat{V}(t,u)| \leq C(1+|u|),
            \quad t \in [0,T], u \in \R^{d}.
      \end{gather}
\end{lemma}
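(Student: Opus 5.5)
The plan is to work in the coordinates given by the fixed orthogonal matrices of Assumption \ref{ass:At-structure}, so that the algebraic equation \eqref{eq:Vhat_alg} becomes a finite-dimensional nonlinear equation on $\operatorname{Im}(Q)$ with a uniformly invertible Jacobian. Define, for fixed $(t,u)$, the map $\Phi(t,u,v) := A_t v + RF(t,u+v)$ for $v \in \operatorname{Im}(Q)$. Since $A_tQ=0$, we have $A_t v = 0$ on $\operatorname{Im}(Q)$, so effectively $\Phi(t,u,v)=RF(t,u+v)$, viewed as a map from $\operatorname{Im}(Q)$ into $\operatorname{Im}(R)$. Both spaces have dimension $d-r$. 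I would first check that the index-$1$ condition on $J(t,x)=A_t+RD_xF(t,x)$ gives invertibility of the restricted Jacobian $RD_xF(t,x)|_{\operatorname{Im}(Q)}\colon \operatorname{Im}(Q)\to\operatorname{Im}(R)$, with inverse bounded by $L_J$. Indeed, $J(t,x)w$ for $w\in\operatorname{Im}(Q)$ equals $RD_xF(t,x)w\in \operatorname{Im}(R)$, and $J$ is injective with $|J^{-1}|\le L_J$. So $|w|\le L_J|RD_xF(t,x)w|$, and surjectivity follows from equality of dimensions.

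For the Lipschitz estimate \eqref{eq:Vhat-Lip-u}, set $v_i=\widehat V(t,u_i)$. Then $0=RF(t,u_1+v_1)-RF(t,u_2+v_2)$. I would write this difference by the mean value theorem in integral form as $\bar J\,(v_1-v_2)+\bar K\,(u_1-u_2)$. Here $\bar J=\int_0^1 RD_xF(t,\xi_s)\,ds$ restricted to $\operatorname{Im}(Q)$, and $\bar K$ is the same average applied to $u_1-u_2$.

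The main obstacle is that an average of invertible restricted Jacobians need not be invertible with the same bound. I would try first to avoid averaging altogether by a homotopy/ODE argument. Consider $u(\lambda)=u_2+\lambda(u_1-u_2)$. By the implicit function theorem (with $F$ assumed $C^1$ in $x$, as implicit in the definition of $J$), $\lambda\mapsto v(\lambda):=\widehat V(t,u(\lambda))$ is $C^1$ with $v'(\lambda)=-(RD_xF|_{\operatorname{Im}Q})^{-1}RD_xF\,(u_1-u_2)$. Hence $|v'|\le L_J\,|R|\,L\,|u_1-u_2|$ pointwise. Integrating over $\lambda\in[0,1]$ gives $L_V=L_J|R|L$. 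The same ODE argument in $t$, using $C^1$ dependence of $F$ on $t$, is what I would use for \eqref{eq:Vhat-Lip-t}. There $\partial_t v=-(RD_xF|_{\operatorname{Im}Q})^{-1}R\partial_tF(t,u+v)$. Bounding $|\partial_tF(t,x)|\le C(1+|x|)$ (an extra smoothness assumption I expect the cited proof uses) and $|v|\le C(1+|u|)$ yields $C(1+|u|)|t-s|$. If only Lipschitz-in-$t$ information with linear growth is available, the same computation with difference quotients should suffice.

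Finally, for the linear growth \eqref{eq:Vhat-linear-growth}, I would apply \eqref{eq:Vhat-Lip-u} with $u_2=0$, giving $|\widehat V(t,u)|\le L_V|u|+|\widehat V(t,0)|$. It remains to bound $\sup_t|\widehat V(t,0)|$. This follows either from continuity of $t\mapsto\widehat V(t,0)$ on the compact interval $[0,T]$, via \eqref{eq:Vhat-Lip-t} at $u=0$, or directly from the same ODE argument along the path $v\mapsto \Phi(t,0,\lambda v)$. The latter gives $|\widehat V(t,0)|\le L_J|RF(t,0)|\le C$ by Assumption \ref{ass:coeff}. Existence and uniqueness of $\widehat V$ itself is taken from the cited references as stated before \eqref{eq:Vhat_alg}.
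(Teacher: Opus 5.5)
The paper gives no argument for this lemma and only cites \cite[Lemma 2.2]{chen2025strong}, so your proposal has to stand on its own. Your proof of \eqref{eq:Vhat-Lip-u} is correct. $RD_xF(t,x)$ restricted to $\operatorname{Im}(Q)$ is a bijection onto $\operatorname{Im}(R)$ with inverse bounded by $L_J$. Homotoping in $u$ uses only this pointwise bound, the implicit function theorem and global unique solvability, so it does avoid the averaged-Jacobian problem you point out.

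You are also right that \eqref{eq:Vhat-Lip-t} needs a hypothesis not in the statement, and this is forced: Assumptions \ref{ass:At-structure}--\ref{ass:coeff} say nothing about how $F$ depends on $t$. Take $d=2$, $A_t\equiv\operatorname{diag}(1,0)$, $G\equiv0$ and $F(t,x)=(0,\,x_2-\sqrt t\,)^{\top}$. All three assumptions hold with $J\equiv I$, yet $\widehat V(t,u)=(0,\,\sqrt t-u_2)^{\top}$ is not Lipschitz in $t$.

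The added hypothesis must also grow only linearly in $x$. Under the paper's Assumption \ref{ass:time_reg_merged}, your formula for $\partial_t\widehat V$ gives only $C(1+|u|^l)|t-s|$, and this loss is real. The choice $F(t,x)=(0,\,x_2-\sin(tx_1)/t)^{\top}$ (read as $x_2-x_1$ at $t=0$) satisfies all of the paper's assumptions with $l=2$, while $|\widehat V(t,u)-\widehat V(s,u)|/|t-s|$ grows like $|u|^2$. So you should state explicitly that you prove \eqref{eq:Vhat-Lip-t} under, say, $|F(t,x)-F(s,x)|\le C(1+|x|)|t-s|$.

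There are two smaller points.

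First, your remark that difference quotients would suffice when $F$ is merely Lipschitz in $t$ does not go through as written. Comparing $RF(t,u+\widehat V(t,u))$ with $RF(t,u+\widehat V(s,u))$ brings back exactly the averaged Jacobian $\int_0^1RD_xF\,d\rho$. What works is to run your homotopy in the target. For $v\in\operatorname{Im}(Q)$, follow the solution $w(\lambda)\in\operatorname{Im}(Q)$ of $RF(t,u+w(\lambda))=(1-\lambda)RF(t,u+v)$. This is uniquely solvable for each $\lambda$ by applying the existence result to $F$ minus a constant vector of $\operatorname{Im}(R)$. Equivalently, apply Hadamard's global inverse function theorem to $\Gamma_t(x):=A_tx+RF(t,x)$, whose Jacobian is $J(t,x)$ and which satisfies $\Gamma_t(u+\widehat V(t,u))=A_tu$. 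Either way you get the residual bound
\begin{equation*}
      |\widehat V(t,u)-v|\le L_J\,|RF(t,u+v)|,\quad u\in\R^{d},\ v\in\operatorname{Im}(Q).
\end{equation*}
The choices $v=0$, $v=\widehat V(t,u_2)$ (with $u=u_1$) and $v=\widehat V(s,u)$ then give \eqref{eq:Vhat-linear-growth}, \eqref{eq:Vhat-Lip-u} and \eqref{eq:Vhat-Lip-t} in one line each. The last of these needs no differentiability in $t$.

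Second, deriving $\sup_t|\widehat V(t,0)|<\infty$ ``via \eqref{eq:Vhat-Lip-t}'' is circular, because your proof of \eqref{eq:Vhat-Lip-t} already uses the linear growth of $\widehat V$. Either take continuity of $t\mapsto\widehat V(t,0)$ straight from the implicit function theorem, or use $v=0$ above. The latter shows that \eqref{eq:Vhat-linear-growth}, unlike \eqref{eq:Vhat-Lip-t}, holds under the stated assumptions. That bound comes from following the preimage of the segment $\lambda\mapsto(1-\lambda)RF(t,u)$. It does not come from the straight segment $\lambda\mapsto\lambda v$ in $v$-space that your last sentence seems to describe, which would again average Jacobians.
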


The following proposition verifies the global Lipschitz continuity and linear growth of the reduced coefficients, and consequently establishes the well-posedness of the reduced SDE \eqref{eq:reducedsde}.
\begin{proposition}\label{lem:_coeff}
      Suppose that Assumptions \ref{ass:At-structure}, \ref{ass:sdae-structure} and \ref{ass:coeff} hold. Then there exists a constant $C > 0$ such that for all $t \in [0,T]$ and $u, u_1, u_2 \in \R^{d}$,
      \begin{gather}
            \label{eq:f-g-lipschitz}
            |f(t,u_1)-f(t,u_2)|+|g(t,u_1)-g(t,u_2)| \leq C|u_1-u_2|,
            \\\label{eq:linear_growth_fg}
            |f(t,u)|+|g(t,u)| \leq C(1+|u|).
      \end{gather}
      Moreover, the reduced SDE \eqref{eq:reducedsde} admits a unique strong solution $\{U_{t}\}_{t \in [0,T]}$.
\end{proposition}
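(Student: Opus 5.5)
The plan is to compose the bounds from Lemma \ref{lem:A-inv-Lipschitz}, Lemma \ref{lem:Vhat_basic} and Assumption \ref{ass:coeff}, and then invoke the classical existence--uniqueness theorem for SDEs with globally Lipschitz, linearly growing coefficients.

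\emph{Lipschitz estimate.} Fix $t\in[0,T]$ and $u_1,u_2\in\R^d$, and set $x_i := u_i+\widehat V(t,u_i)$ for $i=1,2$. By \eqref{eq:Vhat-Lip-u},
\begin{equation*}
|x_1-x_2| \le |u_1-u_2| + |\widehat V(t,u_1)-\widehat V(t,u_2)| \le (1+L_V)|u_1-u_2|.
\end{equation*}
Using submultiplicativity of the Frobenius norm, the bound $|A_t^-|\le \sqrt r/\underline\sigma$ from Lemma \ref{lem:A-inv-Lipschitz}, and the global Lipschitz condition in Assumption \ref{ass:coeff}, we get
\begin{equation*}
|f(t,u_1)-f(t,u_2)| + |g(t,u_1)-g(t,u_2)| \le \frac{\sqrt r}{\underline\sigma}\bigl(|F(t,x_1)-F(t,x_2)| + |G(t,x_1)-G(t,x_2)|\bigr) \le \frac{\sqrt r}{\underline\sigma}\,L(1+L_V)\,|u_1-u_2|.
\end{equation*}
This is \eqref{eq:f-g-lipschitz}, with a constant uniform in $t$.

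\emph{Linear growth.} By \eqref{eq:Vhat-linear-growth}, $|u+\widehat V(t,u)| \le C(1+|u|)$. Combining this with the linear growth of $F$ and $G$ and again with $|A_t^-|\le\sqrt r/\underline\sigma$ gives
\begin{equation*}
|f(t,u)|+|g(t,u)| \le \frac{\sqrt r}{\underline\sigma}\,C\bigl(1+|u+\widehat V(t,u)|\bigr) \le C(1+|u|),
\end{equation*}
which is \eqref{eq:linear_growth_fg}.

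\emph{Measurability.} The only step needing some care is that $f$ and $g$ are jointly Borel measurable in $(t,u)$, as the classical theorem requires. Continuity of $F$ in $(t,x)$ was already used in Proposition \ref{prop:exact_constraint}, so I take $F$ and $G$ jointly continuous, as the paper implicitly does. The map $t\mapsto A_t^-$ is differentiable by Lemma \ref{lem:A-inv-Lipschitz}, hence continuous. For $\widehat V$, \eqref{eq:Vhat-Lip-u} and \eqref{eq:Vhat-Lip-t} give
\begin{equation*}
|\widehat V(t,u)-\widehat V(s,v)| \le C(1+|u|)|t-s| + L_V|u-v|,
\end{equation*}
so $\widehat V$ is jointly continuous. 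Therefore $f$ and $g$ are continuous, hence Borel measurable.

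\emph{Existence and uniqueness.} The initial datum $U_0 = PX_0$ is deterministic, so it is $\F_0$-measurable with finite moments of all orders. The standard theorem for SDEs with globally Lipschitz and linearly growing coefficients (e.g.\ via Picard iteration, Gronwall's inequality and the Burkholder--Davis--Gundy inequality) then yields a unique continuous adapted strong solution $\{U_t\}_{t\in[0,T]}$ of \eqref{eq:reducedsde}.

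Finally, note that $f(t,u)$ and $g(t,u)$ take values in $\operatorname{Im}(A_t^-) = \operatorname{Im}(P)$ for every $(t,u)$. Hence $U_t - U_0 \in \operatorname{Im}(P)$, and since $U_0\in\operatorname{Im}(P)$, the solution stays in $\operatorname{Im}(P)$. This is consistent with $U_t = PX_t$ but is not needed for the existence and uniqueness claim itself.
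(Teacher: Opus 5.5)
Your proposal is correct and follows essentially the same route as the paper: you compose the bound $|A_t^-|\le\sqrt r/\underline\sigma$ with the Lipschitz and linear-growth properties of $F$, $G$ and $\widehat V$ through $u\mapsto u+\widehat V(t,u)$, then invoke the classical existence--uniqueness theorem for globally Lipschitz, linearly growing SDEs (the paper cites Mao). Your extra check that $f,g$ are jointly measurable via joint continuity of $\widehat V$, together with the observation that $U_t$ stays in $\operatorname{Im}(P)$, are harmless refinements that the paper leaves implicit.
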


\begin{proof}
      For any $t \in [0,T]$ and $u \in \R^{d}$, setting $\Psi_t(u) := u + \widehat{V}(t,u)$ yields
      \begin{align*}
            f(t,u) = A_t^{-}F(t,\Psi_t(u)),
            \quad
            g(t,u) = A_t^{-}G(t,\Psi_t(u)).
      \end{align*}
      By the boundedness of $A_t^{-}$ in Lemma \ref{lem:A-inv-Lipschitz}, the global Lipschitz continuity of $F$, $G$ in Assumption \ref{ass:coeff}, and the Lipschitz continuity of $\widehat{V}(t,\cdot)$ in Lemma \ref{lem:Vhat_basic}, we have 
      \begin{align*}
            |f(t,u_1)-f(t,u_2)|
            \leq&~
            |A_t^{-}| \big|F(t,\Psi_t(u_1))-F(t,\Psi_t(u_2))\big|  
            \\\leq&~
            C|\Psi_t(u_1)-\Psi_t(u_2)|  
            \\\leq&~
            C\bigl(|u_1-u_2|
            +
            |\widehat{V}(t,u_1) - \widehat{V}(t,u_2)|\bigr)  
            \\\leq&~
            C|u_1-u_2|, \quad t \in [0,T], u_1, u_2 \in \R^{d}.
      \end{align*}
      The same arguments give $|g(t,u_1)-g(t,u_2)| \leq C|u_1-u_2|, t \in [0,T], u_1, u_2 \in \R^{d}$. Thus \eqref{eq:f-g-lipschitz} holds. Next, using the linear growth of $F$, $G$ in Assumption \ref{ass:coeff}, the boundedness of $A_t^{-}$ in Lemma \ref{lem:A-inv-Lipschitz}, and the linear growth of $\widehat{V}$ in Lemma \ref{lem:Vhat_basic}, we obtain that for any $t \in [0,T]$ and $u \in \R^{d}$,
      \begin{align*}
            |f(t,u)| + |g(t,u)|
            \leq
            |A_t^{-}|\bigl(|F(t,\Psi_t(u))| + |G(t,\Psi_t(u))|\bigr)  
            \leq
            C(1 + |\Psi_t(u)|)  
            \leq
            C(1+|u|),
      \end{align*}
      which proves \eqref{eq:linear_growth_fg}. According to \cite{mao2008stochastic}, \eqref{eq:reducedsde} admits a unique strong solution $\{U_{t}\}_{t \in [0,T]}$. 
\end{proof}

\subsection{Regularity of the reconstruction map and the reduced flow}
To establish the regularity of the backward Kolmogorov function used in the weak convergence analysis, we impose higher-order smoothness assumptions on the original coefficients and transfer them to the algebraic reconstruction map, the reduced coefficients, and the reduced stochastic flow. In view of Assumption \ref{ass:coeff}, the first-order spatial derivative bounds follow from global Lipschitz continuity once differentiability is available, so the following assumption starts from the second-order derivatives.

\begin{assumption}
\label{ass:FG-in-H}
      Fix $p \in \N$. For every $t \in [0,T]$, assume that $F(t,\cdot) \in C^{2p+2}(\R^{d};\R^{d})$ and $G(t,\cdot) \in C^{2p+2}(\R^{d};\R^{d \times m})$. Moreover, there exists a constant $C > 0$ such that
      \begin{equation*}
            |D_{x}^{j}F(t,x)| + |D_{x}^{j}G(t,x)| \leq C, 
            \quad t \in [0,T], x \in \R^{d}, j = 2, 3, \cdots, 2p+2.   
      \end{equation*}
\end{assumption}

We first derive the high-order spatial regularity of the algebraic reconstruction map from the implicit function theorem and the uniform invertibility of the algebraic Jacobian.
\begin{lemma}\label{lem:Vhat-spatial-regularity}
      Suppose that Assumptions \ref{ass:At-structure}, \ref{ass:sdae-structure}, \ref{ass:coeff} and \ref{ass:FG-in-H} hold. Then for every $t \in [0,T]$, the map $u \mapsto \widehat{V}(t,u)$ belongs to $C^{2p+2}(\mathbb R^d;\operatorname{Im}(Q))$. Moreover, for each $j = 1, 2, \cdots, 2p+2$, there exists a constant $C > 0$, independent of $t \in [0,T]$ and $u \in \R^{d}$, such that
      \begin{align*}
            \big|D_u^j\widehat{V}(t,u)\big| \leq C,
            \quad t \in [0,T],u \in \R^{d}.
      \end{align*}
      Here $D_u^j\widehat{V}(t,u)$ is understood as a $j$-linear map, and $|\cdot|$ denotes the corresponding operator norm.
\end{lemma}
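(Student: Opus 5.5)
The plan is to apply the implicit function theorem to the algebraic equation \eqref{eq:Vhat_alg}, restricted to the subspace $\operatorname{Im}(Q)$, and then bound the higher derivatives by induction on $j$ using the chain/Fa\`a di Bruno rule.

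\textbf{Setting up the map.} Fix $t\in[0,T]$ and define
\[
\Phi(u,v):=A_tv+RF(t,u+v), \qquad (u,v)\in\R^d\times\operatorname{Im}(Q).
\]
We view $\Phi$ as a map into $\operatorname{Im}(R)$; this is legitimate because $A_tv=0$ for $v\in\operatorname{Im}(Q)=\operatorname{Ker}(A_t)$, so $\Phi(u,v)=RF(t,u+v)\in\operatorname{Im}(R)$. Both spaces have dimension $d-r$. By Assumption \ref{ass:FG-in-H}, $\Phi$ is $C^{2p+2}$.

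\textbf{Invertibility of the partial derivative.} The partial derivative in $v$ is $D_v\Phi(u,v)=RD_xF(t,u+v)|_{\operatorname{Im}(Q)}$. This coincides with $J(t,x)|_{\operatorname{Im}(Q)}$, $x=u+v$, because $A_tQ=0$. Since $J(t,x)$ is injective, this restriction is injective, and by the dimension count it is an isomorphism onto $\operatorname{Im}(R)$.

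For the uniform bound, let $w\in\operatorname{Im}(R)$ and $z=(D_v\Phi)^{-1}w\in\operatorname{Im}(Q)$. Then $J(t,x)z=w$, so $z=J(t,x)^{-1}w$ and $|z|\le L_J|w|$. Hence $|(D_v\Phi)^{-1}|\le L_J$, uniformly in $(t,u)$.

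\textbf{Regularity and first derivative.} Global existence and uniqueness of $\widehat V(t,u)$ is already known. The implicit function theorem applied locally around each $(u,\widehat V(t,u))$, together with this uniqueness, shows $\widehat V(t,\cdot)\in C^{2p+2}$. Differentiating $\Phi(u,\widehat V(t,u))=0$ gives
\[
D_u\widehat V=-(D_v\Phi)^{-1}RD_xF\,(I+D_u\widehat V)
\]
as a map from $\R^d$ into $\operatorname{Im}(Q)$. This is the same identity as $J\,D_u\widehat V=-RD_xF$ restricted appropriately. It yields $|D_u\widehat V|\le C$, which is consistent with \eqref{eq:Vhat-Lip-u}.

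\textbf{Induction on $j$.} For $2\le j\le 2p+2$, differentiate the identity $RF(t,u+\widehat V(t,u))=0$ $j$ times using Fa\`a di Bruno. The result has the form
\[
RD_xF\,D_u^j\widehat V+\mathcal R_j=0,
\]
where $\mathcal R_j$ is a finite sum of terms $RD_x^kF[\,D^{i_1}\Psi_t,\dots,D^{i_k}\Psi_t\,]$ with $k\ge2$ and $i_1+\dots+i_k=j$, and $\Psi_t(u)=u+\widehat V(t,u)$. Every $i_l$ satisfies $i_l\le j-1$, so these terms involve only derivatives of $\widehat V$ of order less than $j$.

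Since $D_u^j\widehat V$ takes values in $\operatorname{Im}(Q)$, the leading term can be written via the isomorphism above: $D_u^j\widehat V=-(D_v\Phi)^{-1}\mathcal R_j$. Assumption \ref{ass:FG-in-H} bounds $D_x^kF$ for $2\le k\le 2p+2$, and the induction hypothesis bounds the lower-order derivatives of $\widehat V$. Together with $|(D_v\Phi)^{-1}|\le L_J$, this gives $|D_u^j\widehat V|\le C$, with $C$ independent of $t$ and $u$.

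\textbf{Main obstacle.} The delicate point is justifying that $D_v\Phi$, restricted to $\operatorname{Im}(Q)$, is an isomorphism onto $\operatorname{Im}(R)$ with a uniform inverse bound. This requires checking that $A_t$ vanishes on $\operatorname{Im}(Q)$, that $RD_xF$ maps into $\operatorname{Im}(R)$, and the dimension match $\dim\operatorname{Im}(Q)=\dim\operatorname{Im}(R)=d-r$, which follows from the SVD in Assumption \ref{ass:At-structure}. The rest is bookkeeping of the combinatorial constants in the Fa\`a di Bruno expansion, which depend only on $j$ and $d$.
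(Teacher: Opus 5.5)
Your argument is correct and is essentially the paper's: the implicit function theorem plus global uniqueness give $C^{2p+2}$-regularity, and a Fa\`a di Bruno induction inverts the top-order term with the uniform bound $L_J$. The only difference is that the paper works on all of $\R^d$ with $J=A_t+RD_xF$, differentiating $A_tV+RF(t,\Psi)=0$, while you restrict to $RD_xF|_{\operatorname{Im}(Q)}\colon\operatorname{Im}(Q)\to\operatorname{Im}(R)$; this is equivalent because $A_tD_u^j\widehat V=0$. One slip: your displayed first-derivative formula carries a spurious factor $(I+D_u\widehat V)$, which makes its right-hand side vanish identically; the correct formula is $D_u\widehat V=-(D_v\Phi)^{-1}RD_xF$, i.e.\ the identity $J\,D_u\widehat V=-RD_xF$ you cite right after.
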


\begin{proof}
      Fix $t \in [0,T]$ and define $H_t(u,v) := A_{t}v + RF(t,u+v)$ for $u, v \in \R^{d}$. Since $F(t,\cdot) \in C^{2p+2}(\R^{d};\R^{d})$, it follows that $H_t \in C^{2p+2}(\R^{d} \times \R^{d};\R^{d})$. Moreover, the derivative of $H_t$ with respect to the algebraic variable $v$ is
      $$D_vH_t(u,v) = A_t + RD_xF(t,u+v) = J(t,u+v).$$
      Let $u_0 \in \R^{d}$ be arbitrary and set $v_0 := \widehat{V}(t,u_0)$. Then $H_t(u_0,v_0) = 0$. Since $D_{v}H_{t}(u_{0},v_{0}) = J(t,u_0 + v_0)$ is invertible by Assumption \ref{ass:sdae-structure}, the implicit function theorem implies that there exist a neighbourhood $U_0$ of $u_0$ and a unique function $\widetilde{V} \in C^{2p+2}(U_0;\R^{d})$ such that $H_t(u,\widetilde{V}(u)) = 0$ for any $u \in U_{0}$. By the global uniqueness of the algebraic equation defining $\widehat{V}(t,u)$, this local solution coincides with $\widehat{V}(t,u)$ on $U_0$. Since $u_0$ is arbitrary, we obtain $\widehat{V}(t,\cdot) \in C^{2p+2}(\R^{d};\operatorname{Im}(Q))$.

      It remains to prove the uniform boundedness of the derivatives. For fixed $t$, letting $V(u) := \widehat{V}(t,u)$ and $\Psi(u) := u + V(u)$ enables us to rewrite
      \begin{equation}\label{eq:AtVuRFtPsiu}
            A_tV(u) + RF(t,\Psi(u)) = 0.
      \end{equation}
      First, differentiating \eqref{eq:AtVuRFtPsiu} with respect to $u$ gives $A_tDV(u) + RD_xF(t,\Psi(u))D\Psi(u) = 0$. Since $D\Psi(u) = I+DV(u)$, we have
      $$J(t,\Psi(u))DV(u)
        =
        \big(A_t+RD_xF(t,\Psi(u))\big)DV(u)
        =
        -RD_xF(t,\Psi(u)).$$
      Using $RD_xF(t,\Psi(u)) = J(t,\Psi(u))-A_t$ yields $DV(u) = J(t,\Psi(u))^{-1}A_t-I$ and accordingly
      \begin{align}\label{eq:boundedDVu}
            |D_u \widehat{V}(t,u)| = |DV(u)| \leq |J(t,\Psi(u))^{-1}|\,|A_t|+|I| \leq C,  
      \end{align}
      where we used the uniform boundedness of $J^{-1}$ and $A_t$.

      We now prove the higher-order estimates by induction. Let $2 \leq j \leq 2p+2$, and let $\xi_1,\cdots,\xi_j \in \R^{d}$. Applying $D_u^j$ to \eqref{eq:AtVuRFtPsiu} and using the multivariate Fa\`a di Bruno formula, we get
      \begin{equation}\label{eq:AtDjVuequalzero}
            A_tD^jV(u)[\xi_1,\cdots,\xi_j] 
            +
            R D^j\big(F(t,\Psi(u))\big)[\xi_1,\cdots,\xi_j] = 0
      \end{equation}
      with 
      \begin{align*}
            D^j\big(F(t,\Psi(u))\big)[\xi_1,\cdots,\xi_j]
            =
            \sum_{\pi\in\Pi_j} D_x^{|\pi|}F(t,\Psi(u))
            \Big[D^{|B_1|}\Psi(u)[\xi_{B_1}], \cdots,
            D^{|B_{|\pi|}|}\Psi(u)[\xi_{B_{|\pi|}}]\Big],
      \end{align*}
      where $\Pi_j$ denotes the set of all partitions $\pi = \{B_1,\cdots,B_{|\pi|}\}$ of the set $\{1, \cdots,j\}$. For a block $B = \{i_1,\cdots,i_\ell\}$, we use the notation $D^{|B|}\Psi(u)[\xi_B] := D^\ell\Psi(u)[\xi_{i_1},\cdots,\xi_{i_\ell}]$. The term corresponding to the one-block partition $\pi = \big\{\{1,\cdots,j\}\big\}$ is
      $$D_xF(t,\Psi(u))D^j\Psi(u)[\xi_1,\cdots,\xi_j].$$
      Letting $\pi_0 := \big\{\{1,\cdots,j\}\big\}$ yields $|\pi_0| = 1$ and $B_1 = \{1,\cdots,j\}$ as well as
      \begin{align*}
            D_x^{|\pi_0|}F(t,\Psi(u))\Big[D^{|B_1|}\Psi(u)[\xi_{B_1}]\Big]
            =
            D_xF(t,\Psi(u))\Big[D^j\Psi(u)[\xi_1,\cdots,\xi_j]\Big].          
      \end{align*}
      It follows that
      \begin{align*}
            D^j\big(F(t,\Psi(u))\big)[\xi_1,\ldots,\xi_j] 
            =
            D_xF(t,\Psi(u))\Big[D^j\Psi(u)[\xi_1,\ldots,\xi_j]\Big]  
            +
            \Gamma_j(t,u)[\xi_1,\ldots,\xi_j],
      \end{align*}
      where
      \begin{align*}
            \Gamma_j(t,u)[\xi_1,\ldots,\xi_j]
            :=
            \sum_{\substack{\pi\in\Pi_j\\ |\pi|\ge2}}
            D_x^{|\pi|}F(t,\Psi(u))\Big[D^{|B_1|}\Psi(u)[\xi_{B_1}],
            \cdots, D^{|B_{|\pi|}|}\Psi(u)[\xi_{B_{|\pi|}}]\Big].
      \end{align*}
      Applying $D^j\Psi(u) = D^jV(u), j \geq 2$ and \eqref{eq:AtDjVuequalzero} leads to
      \begin{align*}
            J(t,\Psi(u))D^jV(u)[\xi_1,\ldots,\xi_j]
            =      
            \big(A_t+RD_xF(t,\Psi(u))\big)D^jV(u)[\xi_1,\ldots,\xi_j]
            =
            -R\Gamma_j(t,u)[\xi_1,\ldots,\xi_j].          
      \end{align*}
      We now observe that $\Gamma_j(t,u)$ contains only lower-order derivatives of $\Psi$. Indeed, if $|\pi| \geq 2$, then every block $B_\ell$ of the partition satisfies $|B_\ell| \leq j-1$. Hence $\Gamma_j(t,u)$ only involves $D^\ell\Psi(u), 1 \leq \ell \leq j-1$. Moreover, it holds that
      \begin{align}\label{eq:DlpuequalDVu}
            D\Psi(u) = I + DV(u),
            \quad
            D^\ell\Psi(u) = D^\ell V(u),\quad \ell \geq 2.          
      \end{align}

      As we have already proved the boundedness of $DV(u)$ in \eqref{eq:boundedDVu}, now we assume, as the
      induction hypothesis, that for some $j \geq 2$,
      $$|D^\ell V(u)| \leq C, \quad \ell=1,\cdots,j-1,$$
      uniformly in $t \in [0,T]$ and $u \in \R^{d}$. It follows from \eqref{eq:DlpuequalDVu} that $|D^\ell\Psi(u)| \leq C$ for $\ell = 1,\cdots,j-1$. Since $j \leq 2p+2$ and the number of partitions of $\{1,\cdots,j\}$ is finite, we further use Assumption \ref{ass:FG-in-H} to obtain $|\Gamma_j(t,u)| \leq C$ and consequently
      \begin{align*}
            |D^jV(u)|
            \leq
            |J(t,\Psi(u))^{-1}|\,|R|\,|\Gamma_j(t,u)|
            \leq C.
      \end{align*}
      This proves the induction step and completes the proof. 
\end{proof}

We further impose a time regularity assumption on the original coefficients, which is needed to handle the time-inhomogeneity of the reduced SDE and the local weak expansion.
\begin{assumption}\label{ass:time_reg_merged}
      The coefficients $F, G$ are continuously differentiable with respect to the time variable. Moreover, there exist constants $C > 0$ and $l \geq 1$ such that 
      \begin{gather*}
            \label{eq:FG-partialtime}
            |\partial_t F(t,x)|+|\partial_t G(t,x)|
            \leq C\big(1+|x|^{l}\big),
            \quad t \in [0,T], x \in \R^{d}.
      \end{gather*}
\end{assumption}

The next lemma transfers the regularity of $F$, $G$, and $\widehat{V}$ to the reduced coefficients $f$ and $g$. In particular, it provides the $C^{1,2}$-regularity needed for the backward Kolmogorov equation. For later use, we write $f = (f^1,\cdots,f^d)^{\top} \in \R^{d}$ and $g = (g^{i,j})_{1 \leq i \leq d,\;1 \leq j \leq m}$.

\begin{lemma}
\label{lem:_coeff}
      Suppose that Assumptions \ref{ass:At-structure}, \ref{ass:sdae-structure}, \ref{ass:coeff}, \ref{ass:FG-in-H} and \ref{ass:time_reg_merged} hold. Then for every $t \in [0,T]$, the coefficients $f, g$ in \eqref{eq:reducedsde} satisfy $f(t,\cdot) \in C^{2p+2}(\R^{d};\R^{d})$ and $g(t,\cdot) \in C^{2p+2}(\R^{d};\R^{d \times m})$. Moreover, there exists a constant $C > 0$ such that for all $t \in [0,T]$ and $u \in \R^{d}$,
      \begin{gather}
            \label{eq:f-g-high-der-growth}
            |D_u^j f(t,u)| + |D_u^j g(t,u)| \leq C, \quad j = 1,\cdots,2p+2,
            \\\label{eq:f-g-time-derivative-growth}
            |\partial_t f(t,u)| + |\partial_t g(t,u)| \leq C(1+|u|^l).
      \end{gather}
      Consequently, $f^{i} \in C^{1,2}([0,T] \times \R^{d};\R)$ and $g^{i,j} 
      \in C^{1,2}([0,T] \times \R^{d};\R)$ for $i = 1,\cdots,d, j = 1,\cdots,m$.
\end{lemma}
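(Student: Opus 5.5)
We write $f(t,u)=A_t^{-}F(t,\Psi_t(u))$ and $g(t,u)=A_t^{-}G(t,\Psi_t(u))$ with $\Psi_t(u):=u+\widehat{V}(t,u)$, as in the proof of Proposition \ref{lem:_coeff}, and treat the spatial and temporal regularity separately.

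\emph{Spatial part.} Since $A_t^{-}$ does not depend on $u$, the regularity $f(t,\cdot)\in C^{2p+2}$ follows from the chain rule. We combine $F(t,\cdot)\in C^{2p+2}$ from Assumption \ref{ass:FG-in-H} with $\widehat{V}(t,\cdot)\in C^{2p+2}$ from Lemma \ref{lem:Vhat-spatial-regularity}. For $j=1$ we use
\begin{equation*}
Df(t,u)=A_t^{-}D_xF(t,\Psi_t(u))(I+D_u\widehat{V}(t,u)).
\end{equation*}
Here $|D_xF|\le L$ by the global Lipschitz bound in Assumption \ref{ass:coeff}, $|A_t^{-}|$ is bounded by Lemma \ref{lem:A-inv-Lipschitz}, and $|D_u\widehat V|$ is bounded by \eqref{eq:boundedDVu}. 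For $2\le j\le 2p+2$ we apply the Fa\`a di Bruno expansion exactly as in the proof of Lemma \ref{lem:Vhat-spatial-regularity}:
\begin{equation*}
D^j_u\big(F(t,\Psi_t(u))\big)=\sum_{\pi\in\Pi_j}D_x^{|\pi|}F(t,\Psi_t(u))\big[D^{|B_1|}\Psi_t(u),\cdots\big].
\end{equation*}
Every factor is uniformly bounded: $D_x^{k}F$ is bounded for $k\ge1$, and $D^\ell\Psi_t$ is bounded for $\ell\ge1$. Since there are finitely many partitions, \eqref{eq:f-g-high-der-growth} follows, and the same argument applies to $g$.

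\emph{Time derivative.} This is the main step, because $\widehat{V}$ must first be shown to be $C^1$ in $t$ with a suitable growth bound.

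Step 1: differentiability of $\widehat V$ in $t$. We apply the implicit function theorem to
\begin{equation*}
H(t,u,v):=A_tv+RF(t,u+v)
\end{equation*}
as a function of $(t,v)$. This map is $C^1$ in $t$ by Assumption \ref{ass:At-structure} ($\Sigma_t\in C^1$) and Assumption \ref{ass:time_reg_merged}. Its $v$-Jacobian is $J(t,u+v)$, which is invertible. Hence $\widehat V$ is $C^1$ in $t$. A minor subtlety is that the implicit-function solution lies in $\operatorname{Im}(Q)$; this is handled by the global uniqueness of the solution, exactly as in the proof of Lemma \ref{lem:Vhat-spatial-regularity}.

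Step 2: formula and bound for $\partial_t\widehat V$. Differentiating $H(t,u,\widehat V(t,u))=0$ in $t$ gives
\begin{equation*}
J(t,\Psi_t(u))\,\partial_t\widehat V(t,u)=-A_t'\widehat V(t,u)-R\,\partial_tF(t,\Psi_t(u)).
\end{equation*}
We bound the right-hand side term by term. The bounds $|J^{-1}|\le L_J$ and $|A_t'|\le C_A$ are available. The linear growth \eqref{eq:Vhat-linear-growth} controls $\widehat V$. Assumption \ref{ass:time_reg_merged}, together with $|\Psi_t(u)|\le C(1+|u|)$, controls $\partial_tF$. Since $l\ge1$, these combine to give
\begin{equation*}
|\partial_t\widehat V(t,u)|\le C(1+|u|^l).
\end{equation*}

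Step 3: time derivatives of $f$ and $g$. By the product and chain rules,
\begin{equation*}
\partial_tf=\Big(\tfrac{d}{dt}A_t^{-}\Big)F(t,\Psi_t(u))+A_t^{-}\big[\partial_tF(t,\Psi_t(u))+D_xF(t,\Psi_t(u))\,\partial_t\widehat V(t,u)\big].
\end{equation*}
The factor $\frac{d}{dt}A_t^{-}$ is bounded by Lemma \ref{lem:A-inv-Lipschitz}, and $F$ has linear growth. The remaining terms are controlled by $C(1+|u|^l)$ using Step 2 and Assumption \ref{ass:time_reg_merged}. This yields \eqref{eq:f-g-time-derivative-growth}, and the same computation works for $g$.

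\emph{Joint continuity.} To obtain the $C^{1,2}$ membership, we also need $\partial_t f$ and $D_u^2 f$ to be jointly continuous in $(t,u)$. We check this through the explicit formulas above: $J^{-1}$ is continuous, and $F$, $\partial_tF$, $D_xF$, $D_x^2F$ are continuous in $(t,x)$, which we take as implicit in the assumptions. Joint continuity of $\widehat V$ itself follows from \eqref{eq:Vhat-Lip-u} and \eqref{eq:Vhat-Lip-t}. The representation $D_u\widehat V=J^{-1}A_t-I$, and its analogue for $D_u^2\widehat V$, then transfers this continuity to the derivatives.

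We expect the main obstacle to be the $t$-regularity of $\widehat V$ in Steps 1 and 2, since $\widehat V$ is defined only implicitly and the growth bound must come out uniform in $t$.
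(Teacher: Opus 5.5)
Your proposal is correct and takes essentially the same route as the paper. Both use the composition $f=A_t^{-}F(t,\Psi_t(u))$ with Fa\`a di Bruno for the bounded spatial derivatives, the implicit-differentiation identity $J(t,\Psi_t(u))\,\partial_t\widehat V=-A_t'\widehat V-R\,\partial_tF(t,\Psi_t(u))$ for the time growth bound, and the explicit derivative formulas for joint continuity; your use of the Lipschitz estimates of Lemma~\ref{lem:Vhat_basic} to get joint continuity of $\widehat V$ only makes explicit what the paper attributes to the parameter-dependent implicit function theorem.
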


\begin{proof}
      For any $t \in [0,T]$ and $u \in \R^{d}$, setting $\Psi_t(u) := u + \widehat{V}(t,u)$ yields $f(t,u) = A_t^{-}F(t,\Psi_t(u))$ and $g(t,u) = A_t^{-}G(t,\Psi_t(u))$. 
      Since $F(t,\cdot) \in C^{2p+2}(\R^{d};\R^{d})$, $G(t,\cdot) \in C^{2p+2}(\R^{d};\R^{d \times m})$ and $\widehat{V}(t,\cdot) \in C^{2p+2}(\R^{d};\operatorname{Im}(Q))$ by Lemma \ref{lem:Vhat-spatial-regularity}, we have $u \mapsto \Psi_t(\cdot)$ belonging to $C^{2p+2}(\R^{d};\R^{d})$. Utilizing the standard regularity of compositions ensures $F(t,\Psi_t(\cdot)) \in C^{2p+2}(\R^{d};\R^{d})$, $G(t,\Psi_t(\cdot)) \in C^{2p+2}(\R^{d};\R^{d \times m})$ and accordingly $f(t,\cdot) \in C^{2p+2}(\R^{d};\R^{d})$, $g(t,\cdot) \in C^{2p+2}(\R^{d};\R^{d \times m})$. As $A_t^{-}$ does not depend on $u$, we use the multivariate Fa\`a di Bruno formula to show that for any $1 \leq j \leq 2p+2$ and $\xi_1, \cdots, \xi_j \in \R^{d}$, 
      \begin{align*}
            D_u^j f(t,u)[\xi_1,\cdots,\xi_j]
            =&~
            A_t^{-}D_u^j\bigl(F(t,\Psi_t(u))\bigr)[\xi_1,\cdots,\xi_j]
            \\=&~
            A_t^{-} \sum_{\pi\in\Pi_j} D_x^{|\pi|}F(t,\Psi_t(u))
            \Big[D_u^{|B_1|}\Psi_t(u)[\xi_{B_1}], \cdots, 
            D_u^{|B_{|\pi|}|}\Psi_t(u)[\xi_{B_{|\pi|}}]\Big],
      \end{align*}
      where $\Pi_j$ denotes the set of all partitions $\pi = \{B_1,\cdots,B_{|\pi|}\}$ of $\{1,\cdots,j\}$. For a block $B = \{i_1,\cdots,i_\ell\}$, we write $D_u^{|B|}\Psi_t(u)[\xi_B] := D_u^\ell\Psi_t(u)[\xi_{i_1}, \cdots,\xi_{i_\ell}]$. For $j = 1$, the boundedness of $D_xF$ follows from the global Lipschitz continuity of $F(t,\cdot)$ together with its differentiability. For $j \geq 2$, the boundedness of $D_x^kF$, $2 \leq k \leq j$ follows from Assumption \ref{ass:FG-in-H}. Moreover, using $D_u\Psi_t(u) = I + D_u\widehat{V}(t,u)$ and $D_u^\ell\Psi_t(u)=D_u^\ell\widehat{V}(t,u), \ell \geq 2$ as well as Lemma \ref{lem:Vhat-spatial-regularity} implies 
      \begin{align*}
            |D_u^\ell\Psi_t(u)| \leq C, \quad \ell = 1,\cdots,2p+2,         
      \end{align*}
      uniformly for $t \in [0,T]$ and $u \in \R^{d}$. It follows from the boundedness of $A_t^{-}$ in Lemma \ref{lem:A-inv-Lipschitz} that
      \begin{align*}
            |D_u^j f(t,u)|
            \leq
            |A_t^{-}| \sum_{\pi \in \Pi_j} 
            \Big|D_x^{|\pi|}F(t,\Psi_t(u))\Big|
            \prod_{B \in \pi}
            \Big| D_u^{|B|}\Psi_t(u)\Big|
            \leq C,
            \quad j = 1,\cdots,2p+2.
      \end{align*}
      The same arguments applied to $G(t,\Psi_t(u))$ yield $|D_u^j g(t,u)| \leq C$ for $j = 1,\cdots,2p+2$, i.e., \eqref{eq:f-g-high-der-growth} holds.

      To prove \eqref{eq:f-g-time-derivative-growth}, we first estimate $\partial_t \widehat{V}(t,u)$. From $A_t\widehat{V}(t,u) + RF(t,u + \widehat{V}(t,u)) = 0$ and the parameter-dependent implicit function theorem, the map $t \mapsto \widehat{V}(t,u)$ is continuously differentiable for each fixed $u$. Differentiating $A_t\widehat{V}(t,u) + RF(t,u + \widehat{V}(t,u)) = 0$ with respect to \(t\), we get
      \begin{align*}
            J(t,\Psi_t(u))\partial_t \widehat{V}(t,u)
            =
            -A_t' \widehat{V}(t,u)
            -
            R\partial_tF(t,\Psi_t(u)).
      \end{align*}
      Using the boundedness of $J^{-1}$ in Assumption \ref{ass:sdae-structure}, the boundedness of $A_t'$ in Lemma \ref{lem:A-inv-Lipschitz}, the linear growth of $\widehat{V}$ in Lemma \ref{lem:Vhat_basic}, and the time regularity assumption on \(F\) in Assumption \ref{ass:time_reg_merged} yields
      \begin{align*}
            |\partial_t \widehat{V}(t,u)|
            \leq&~
            |J(t,\Psi_t(u))^{-1}|\big(| A_t'|\,|\widehat{V}(t,u)|
            + |R|\,|\partial_t F(t,\Psi_t(u))|\bigr)  
            \\\leq&~
            C\big(1 + |u| + 1 + |\Psi_t(u)|^l\big)  
            \\\leq&~
            C(1 + |u|^l)
      \end{align*}
      due to $l \geq 1$ and $|\Psi_t(u)| \leq C(1+|u|)$. Now differentiating $f$ and $g$ with respect to $t$ gives
     \begin{gather*}
          \partial_t f(t,u)
          =
          \bigg(\frac{d}{dt} A_t^{-}\bigg)F(t,\Psi_t(u))
          +
          A_t^{-}\Big(\partial_t F(t,\Psi_t(u))
          +
          D_x F(t,\Psi_t(u))\partial_t \widehat{V}(t,u)\Big), \\
          \partial_t g(t,u)
          =
          \bigg(\frac{d}{dt} A_t^{-}\bigg)G(t,\Psi_t(u))
          +
          A_t^{-}\Big(\partial_t G(t,\Psi_t(u))
          +
          D_x G(t,\Psi_t(u))\partial_t \widehat{V}(t,u)\Big).
        \end{gather*}
      By the boundedness of $A_t^{-}$ and $\frac{d}{dt} A_t^{-}$, the linear growth of $F$ and $G$, the time regularity of $F$ and $G$, the boundedness of $D_xF$ and $D_xG$, and the estimate for $\partial_t \widehat{V}$, we get
      \begin{align*}
            |\partial_t f(t,u)| + |\partial_t g(t,u)|
            \leq&~
            \bigg|\frac{d}{dt} A_t^{-}\bigg|\big(|F(t,\Psi_t(u))| 
            + 
            |G(t,\Psi_t(u))|\big)  
            \\&~+
            |A_t^{-}|\big(|\partial_tF(t,\Psi_t(u))|
            + |\partial_tG(t,\Psi_t(u))|\big)  
            \\&~+
            |A_t^{-}|\big(|D_xF(t,\Psi_t(u))| + |D_xG(t,\Psi_t(u))|\big)
            |\partial_t \widehat{V}(t,u)|  
            \\\leq&~
            C(1 + |\Psi_t(u)|) + C(1 + |\Psi_t(u)|^l) + C(1 + |u|^l)  
            \\\leq&~
            C(1 + |u|^l),
      \end{align*}
      which implies \eqref{eq:f-g-time-derivative-growth}.

      Finally, we verify the $C^{1,2}$-regularity. It suffices to show that $\partial_t f, \partial_t g, D_u f, D_u g, D_u^2 f, D_u^2 g$ are jointly continuous in $(t,u) \in [0,T] \times \R^{d}$. Indeed, from \eqref{eq:coff} and $\Psi_t(u) = u + \widehat{V}(t,u)$, we have
      \begin{gather*}
            \partial_t f(t,u)
            =
            \bigg(\frac{d}{dt} A_t^{-}\bigg)F(t,\Psi_t(u))
            +
            A_t^{-}\big(\partial_tF(t,\Psi_t(u))
            + 
            D_xF(t,\Psi_t(u))\partial_t\widehat{V}(t,u)\big),
            \\
            \partial_t g(t,u)
            =
            \bigg(\frac{d}{dt} A_t^{-}\bigg)G(t,\Psi_t(u))
            +
            A_t^{-}\big(\partial_tG(t,\Psi_t(u))
            +
            D_xG(t,\Psi_t(u))\partial_t\widehat{V}(t,u)\big),
            \\
            D_u f(t,u)
            =
            A_t^{-}D_xF(t,\Psi_t(u))D_u\Psi_t(u),
            \quad
            D_u g(t,u)
            =
            A_t^{-}D_xG(t,\Psi_t(u))D_u\Psi_t(u),
            \\
            D_u^2 f(t,u)[\xi,\eta]
            =
            A_t^{-}\big(D_x^2F(t,\Psi_t(u))
            [D_u\Psi_t(u)\xi,D_u\Psi_t(u)\eta] 
            +
            D_xF(t,\Psi_t(u))D_u^2\Psi_t(u)[\xi,\eta]\big),
            \\
            D_u^2 g(t,u)[\xi,\eta]
            =
            A_t^{-}\big(D_x^2G(t,\Psi_t(u))
            [D_u\Psi_t(u)\xi,D_u\Psi_t(u)\eta]
            +
            D_xG(t,\Psi_t(u))D_u^2\Psi_t(u)[\xi,\eta]\big)
      \end{gather*}
      for any $\xi,\eta \in \R^{d}$. By the assumptions on $F$ and $G$, the regularity of $A_t^{-}$, and the parameter-dependent implicit-function formulas for $\widehat{V}$, the quantities $\widehat{V}, \partial_t \widehat{V}, D_u \widehat{V}, D_u^2 \widehat{V}$ are jointly continuous in $(t,u)$. It follows that $\Psi_t(u), \partial_t \widehat{V}(t,u), D_u \Psi_t(u), D_u^2 \Psi_t(u)$ are also jointly continuous, and that $\partial_t f, \partial_t g, D_u f, D_u g, D_u^2 f, D_u^2 g$ are jointly continuous on $[0,T] \times \R^{d}$. The proof is complete. 
\end{proof}

With the regularity of $f$ and $g$ established, standard stochastic-flow results can be applied to the reduced SDE. For $0 \leq t \leq s \leq T$ and $u \in \R^{d}$, denote by $U(t,u;s)$ the solution value at time $s$ of \eqref{eq:reducedsde} with initial condition $U(t,u;t) = u$.  

\begin{proposition}\label{lem:flow_regularity_reduced}
      Suppose that Assumptions \ref{ass:At-structure}, \ref{ass:sdae-structure}, \ref{ass:coeff}, \ref{ass:FG-in-H} and \ref{ass:time_reg_merged} hold. Then for every $q \geq 2$, there exists a constant $C > 0$, independent of $t, s, u$, such that
      \begin{equation}\label{eq:moment_exact_solution_all_q}
            \sup_{0 \leq t \leq s \leq T}
            \E\left[\sup_{t \leq r \leq s}|U(t,u;r)|^{q}\right]
            \leq
            C(1+|u|^q), \quad u \in \R^{d}.
      \end{equation} 
      Moreover, the reduced SDE \eqref{eq:reducedsde} admits a version such that for every $0 \leq t \leq s \leq T$, the map $u \mapsto U(t,u;s)$ is $2p+2$ times continuously differentiable. Furthermore, for every $j = 1, 2, \cdots, 2p+2$ and $q \geq 2$, there exists a constant $C > 0$, independent of $t,s,u$, such that
      \begin{equation}\label{eq:ass9-derivative-weighted}
            \sup_{0 \leq t \leq s \leq T}\sup_{u \in \R^{d}}
            \E\left[\sup_{t \leq r \leq s}|D_u^jU(t,u;r)|^{q}\right]
            \leq 
            C.
      \end{equation}
      Consequently, $u \mapsto U(t,u;T)$ is $2p+2$ times mean-square differentiable with respect to the initial value $u$ in the componentwise sense. Finally, the flow property
      \begin{equation*}
            U(t,u;r)
            =
            U\bigl(s,U(t,u;s);r\bigr),
            \quad
            0 \leq t \leq s \leq r \leq T
      \end{equation*}
      holds almost surely.
\end{proposition}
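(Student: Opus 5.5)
The plan is to treat \eqref{eq:reducedsde} as a standard time-inhomogeneous SDE with globally Lipschitz coefficients whose spatial derivatives are bounded. All four conclusions then follow from classical moment and stochastic-flow arguments, and the SDAE structure enters only through Proposition \ref{lem:_coeff} and Lemma \ref{lem:_coeff}.

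\textbf{Moment bound \eqref{eq:moment_exact_solution_all_q}.} Fix $0 \leq t \leq s \leq T$ and $u \in \R^{d}$. I write
\begin{equation*}
U(t,u;r) = u + \int_t^r f(\tau,U(t,u;\tau))\,d\tau + \int_t^r g(\tau,U(t,u;\tau))\,dW_\tau .
\end{equation*}
I bound the $q$-th power of the running supremum using H\"older's inequality for the drift term and the Burkholder--Davis--Gundy inequality for the stochastic integral. The linear growth \eqref{eq:linear_growth_fg} then gives
\begin{equation*}
\E\Bigl[\sup_{t\le \rho\le r}|U(t,u;\rho)|^q\Bigr] \leq C|u|^q + C\int_t^r \Bigl(1+\E\Bigl[\sup_{t\le\rho\le\tau}|U(t,u;\rho)|^q\Bigr]\Bigr)d\tau .
\end{equation*}
To keep the expectations finite, I first run this with a stopping time localization. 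Gronwall's lemma then yields the bound, and the constant depends only on $T$, $q$ and the constant in \eqref{eq:linear_growth_fg}, hence uniformly in $t,s$.

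\textbf{Differentiability and the derivative bounds \eqref{eq:ass9-derivative-weighted}.} By Lemma \ref{lem:_coeff}, $f(t,\cdot)$ and $g(t,\cdot)$ are $C^{2p+2}$ with derivatives of orders $1,\dots,2p+2$ bounded uniformly in $(t,u)$. I also need the joint continuity established there, extended to higher spatial derivatives via the Fa\`a di Bruno formulas. With this, Kunita's theorem on the differentiability of stochastic flows gives a version of $U(t,u;s)$ that is $C^{2p+2}$ in $u$. Its derivatives $D_u^jU$ solve the formally differentiated linear SDEs. For $j=1$ this is
\begin{equation*}
dD_uU = D_uf(\tau,U)D_uU\,d\tau + D_ug(\tau,U)D_uU\,dW_\tau, \qquad D_uU(t,u;t)=I .
\end{equation*}
For $j \geq 2$, the equation for $D_u^jU$ has the same linear part $D_uf\,D_u^jU$, $D_ug\,D_u^jU$. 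It also has an inhomogeneous term, which is a finite sum of products $D_u^kf(\tau,U)[D_u^{i_1}U,\dots,D_u^{i_k}U]$ with $i_1+\dots+i_k=j$ and every $i_\ell<j$, and likewise for $g$; the initial value is zero. I then prove the bounds by induction on $j$. The linear coefficients are bounded by \eqref{eq:f-g-high-der-growth}. The inhomogeneous terms have moments of every order by the induction hypothesis together with H\"older's inequality, which is where the requirement ``for every $q$'' is used. Applying BDG and Gronwall as in the first step gives bounds uniform in $u$, because no derivative of $f$ or $g$ of order at least one grows in $u$.

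\textbf{Mean-square differentiability.} This follows from the above by the usual argument. I write the difference quotient of the derivative of order $j-1$ as $\int_0^1 D_u^jU(t,u+\lambda\varepsilon e_i;T)\,d\lambda\,e_i$. The uniform $L^q$ bounds give uniform integrability, and the pathwise continuity of $D_u^jU$ in $u$ then lets me pass to the limit in $L^2$.

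\textbf{Flow property.} This follows from pathwise uniqueness. Both $r\mapsto U(t,u;r)$ and $r\mapsto U(s,U(t,u;s);r)$ solve the reduced SDE on $[s,T]$ with the same $\F_s$-measurable initial datum. To justify substituting the random initial value into the flow, I use the continuous version in $u$ and approximate $U(t,u;s)$ by simple $\F_s$-measurable random variables.

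The main obstacle I expect is the differentiability step. Kunita's theorem needs the spatial derivatives of the coefficients to be suitably continuous in $(t,u)$, and this must be checked from Lemma \ref{lem:_coeff} and the implicit function theorem for $\widehat{V}$. I would also need to keep careful combinatorial track of the inhomogeneous terms, so that the induction closes with constants independent of $u$.
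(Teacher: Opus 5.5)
Your proposal is correct and follows essentially the same route as the paper: moments via BDG and Gronwall (the paper just cites Mao), a smooth version from Kunita's flow theorem, derivative bounds by induction on the variational equations, and the flow property from pathwise uniqueness. Two of your steps are more careful than what the paper writes: the simple-function approximation that justifies plugging the random initial value $U(t,u;s)$ into the flow, and the Vitali-type argument for mean-square differentiability.

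One refinement of the obstacle you flag, which applies equally to the paper's citation: the standard flow-differentiability theorems give a $C^{k}$ flow for coefficients of class $C^{k,\delta}$, meaning the top-order derivatives are H\"older continuous uniformly in $t$, not merely jointly continuous. Under bounded derivatives up to order $2p+2$, this directly yields only a pathwise $C^{2p+1}$ flow; the order-$(2p+2)$ derivative and its bounds are then obtained in the $L^q$ sense through your variational-equation induction.
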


\begin{proof}
      The standard moment estimate for SDEs with globally Lipschitz and linearly growing coefficients yields \eqref{eq:moment_exact_solution_all_q}; see, e.g., \cite[Theorem 2.4.4]{mao2008stochastic}.

      By Lemma \ref{lem:_coeff}, the classical differentiability theorem for stochastic flows thus applies; see, e.g., \cite[Theorem 2.4.3]{kunita1984stochastic}. It follows that the solution admits a version such that $u \mapsto U(t,u;s)$ is $2p+2$ times continuously differentiable. The derivative estimates follow from the corresponding variational equations. Indeed, the first derivative satisfies a linear SDE with bounded coefficients, while the $j$-th derivative satisfies a linear SDE whose nonhomogeneous terms involve only derivatives of the flow of order less than $j$. Using the Burkholder--Davis--Gundy inequality, the Gronwall inequality, and induction over $j$, one obtains \eqref{eq:ass9-derivative-weighted}.

      Finally, the flow property follows from the pathwise uniqueness of the strong solution. Indeed, for $0 \leq t \leq s \leq r \leq T$, both $U(t,u;r)$ and $U(s,U(t,u;s);r)$ solve the same SDE on $[s,T]$ with the same initial value $U(t,u;s)$ at time $s$. Hence they are indistinguishable. The proof is complete.
\end{proof}

\section{Weak convergence analysis of the stochastic theta method}\label{sec:mainbody}
In this section, we prove the weak convergence order of the stochastic theta method. We first establish an abstract weak convergence theorem for constraint-preserving one-step approximations of the index-$1$ SDAEs \eqref{eq:sdae}. The theorem reduces the global weak error estimate to a one-step weak local error estimate for the induced differential component, together with suitable moment bounds of the numerical solution. We then verify these conditions for the stochastic theta method.

\subsection{An abstract weak convergence theorem}
Let $0 = t_0<t_1<\cdots<t_N = T$ be a uniform partition of $[0,T]$ with stepsize $h = \frac{T}{N}, N \in \N$, and set $\Delta{W_n} := W_{t_{n+1}}-W_{t_n}$, $n=0,1,\cdots,N-1$. For any $t \in [0,T], x \in \R^{d}$ and $0 < t+h \leq T$, we introduce a one-step approximation in the form
\begin{equation}\label{eq:one_step_map}
      Y(t,x;t+h)
      =
      x + \Phi(t,x,h;\xi),
\end{equation}
where $\xi$ is a random variable, possibly vector-valued, and $\Phi$ is a measurable function valuing in $\R^{d}$. The corresponding discrete numerical solution $\{Y_n\}_{n=0}^{N}$ is then defined recursively by
\begin{equation}\label{eq:numerical_recursion}
      Y_0 = X_0, \quad
      Y_{n+1} = Y(t_n,Y_n;t_{n+1}) = Y_n + \Phi(t_n,Y_n,h;\xi_n),
      \quad n = 0,1,\cdots,N-1,
\end{equation}
where $\xi_{n}$ is independent of $Y_0,Y_1,\cdots,Y_n,\xi_0,\xi_1,\cdots,\xi_{n-1}$. The following theorem gives the abstract weak convergence principle used in this paper. It reduces the global weak error estimate to constraint
preservation, a local weak error estimate for the induced differential component, and moment bounds for the numerical solution.

\begin{theorem}\label{thm:weak_convergence_sdae}
      Let $\{Y_n\}_{n = 0}^{N}$ be defined by \eqref{eq:numerical_recursion}, and set $u_n := PY_n$ for $n = 0, 1, \cdots, N$. Suppose that Assumptions \ref{ass:At-structure}, \ref{ass:sdae-structure}, \ref{ass:coeff}, \ref{ass:FG-in-H} and \ref{ass:time_reg_merged} hold. Fix $\varphi \in C_{\mathrm{pol}}^{2p+2}(\R^{d};\R)$, define $$\psi(u) := \varphi\bigl(u + \widehat{V}(T,u)\bigr),
      \quad w(t,u) := \E\big[\psi(U(t,u;T))\big], 
      \quad t \in [0,T], u \in \R^{d},$$
      and assume that the following conditions are satisfied.
      \begin{enumerate}
            \item[\textup{(i)}] The numerical approximation $\{Y_n\}_{n = 0}^{N}$ is well defined and preserves the constraint manifold along the numerical trajectory, i.e., $\P(Y_{n} \in \mathcal{M}_{t_{n}}) = 1$ for all $n = 0,1,\cdots,N$.

            \item[\textup{(ii)}] The numerical approximation $\{Y_n\}_{n = 0}^{N}$ satisfies the moment bound, i.e., for any $q \geq 1$, there exists a constant $C > 0$ such that
            \begin{align*}
                  \sup_{N \in \N}\sup_{0 \leq n \leq N} \E\big[|Y_{n}|^{q}\big]
                  \leq
                  C(1 + |Y_{0}|^{q}).
            \end{align*}
            
            \item[\textup{(iii)}] The induced differential one-step map $$\widetilde{u}(t,u;t+h) := PY\bigl(t,u + \widehat{V}(t,u);t+h\bigr), \quad t \in [0,T-h], u \in \R^{d}$$ 
            satisfies $u_{n+1} = \widetilde{u}(t_n,u_n;t_{n+1})$, whenever $Y_n = u_n + \widehat{V}(t_n,u_n)$. 
            
            \item[\textup{(iv)}] There exist constants $C > 0$ and $q \geq 0$ such that for every $h \in (0,T]$ and $u \in \R^{d}$,
            \begin{align*}
                  \sup_{t\in[0,T-h]}
                  \left|\E\bigl[w(t+h,U(t,u;t+h))\bigr]
                  - \E\bigl[w(t+h,\widetilde{u}(t,u;t+h))\bigr]\right|
                  \leq
                  C(1+|u|^{q})h^{p+1}.                
            \end{align*}
      \end{enumerate}
      Then there exist constants $C > 0$ and $q \geq 0$, independent of $h$ and $N$, such that
      \begin{align*}
            \big|\E\big[\varphi(X_T)\big] - \E\big[\varphi(Y_N)\big]\big|
            \leq
            C(1+|Y_{0}|^{q})h^{p}.
      \end{align*}
\end{theorem}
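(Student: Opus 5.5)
The plan is to run the classical Milstein-type telescoping argument for the reduced SDE \eqref{eq:reducedsde}, with the constraint preservation in (i) serving as the bridge between the SDAE level and the reduced level.

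\textbf{Reduction to the differential component.} Since $X_T = U_T + \widehat{V}(T,U_T)$ by \eqref{eq:X_decomp}, we have $\E[\varphi(X_T)] = \E[\psi(U(0,u_0;T))] = w(0,u_0)$ with $u_0 = PX_0$. By (i), $RF(t_n,Y_n)=0$. Write $Y_n = u_n + QY_n$ with $QY_n \in \operatorname{Im}(Q)=\operatorname{Ker}(A_{t_n})$. Then $QY_n$ solves \eqref{eq:Vhat_alg} with $u=u_n$, so uniqueness gives $Y_n = u_n + \widehat{V}(t_n,u_n)$. Hence $\varphi(Y_N) = \psi(u_N)$ and $\E[\varphi(Y_N)] = \E[w(T,u_N)]$. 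Moreover, by (iii), $u_{n+1} = \widetilde{u}(t_n,u_n;t_{n+1})$ for every $n$.

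\textbf{Telescoping.} Write
\begin{equation*}
\E[\varphi(X_T)]-\E[\varphi(Y_N)] = \sum_{n=0}^{N-1}\Bigl(\E[w(t_n,u_n)] - \E[w(t_{n+1},u_{n+1})]\Bigr).
\end{equation*}
The flow property in Proposition~\ref{lem:flow_regularity_reduced} and the Markov property give $w(t_n,u) = \E[w(t_{n+1},U(t_n,u;t_{n+1}))]$. The noise $\xi_n$ is independent of $Y_0,\dots,Y_n$, hence of $u_n$, and the Brownian increment on $[t_n,t_{n+1}]$ is independent of $\F_{t_n}$. Conditioning on $u_n$ therefore lets us apply (iv) with $u=u_n$ frozen. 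Each summand is then bounded by $C\,\E[1+|u_n|^q]h^{p+1}$. By (ii) and $|u_n|\le |Y_n|$, this is at most $C(1+|Y_0|^q)h^{p+1}$. Summing over $N=T/h$ terms yields $C(1+|Y_0|^q)h^p$.

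\textbf{Main obstacle.} The delicate step is the conditioning argument. It requires $w(t+h,\cdot)$ to be measurable with polynomial growth, so that the expectations are finite and Fubini-type disintegration applies. We also need the exact one-step flow started at $u_n$ to use Brownian increments independent of $u_n$, which is legitimate because the exact-solution side in (iv) depends only on the law. Polynomial growth of $w$ follows from $\varphi\in C^{2p+2}_{\mathrm{pol}}$, the linear growth of $\widehat{V}$ in Lemma~\ref{lem:Vhat_basic}, and the moment bound \eqref{eq:moment_exact_solution_all_q}. Here $|w(t,u)|\le C\E[1+|U(t,u;T)|^{k}]\le C(1+|u|^k)$. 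Further regularity of $w$, via \eqref{eq:ass9-derivative-weighted}, is needed only to verify (iv), not in this theorem.
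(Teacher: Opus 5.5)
Your proposal is correct and follows essentially the same route as the paper. You use (i) and the uniqueness of the algebraic reconstruction to write $Y_n = u_n + \widehat{V}(t_n,u_n)$, turn the error into $\E[w(t_0,u_0)] - \E[w(t_N,u_N)]$, telescope with the semigroup identity $w(t_n,u) = \E[w(t_{n+1},U(t_n,u;t_{n+1}))]$, and bound each summand by (iv) and the moment bound (ii). Your explicit freezing of $u_n$, using the independence of $\xi_n$ and the fact that only the law of the exact one-step flow matters, tightens a step the paper applies to the random $u_n$ without comment.
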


\begin{proof}
      From \textup{(i)} and the global solvability of the algebraic equation \eqref{eq:Vhat_alg}, it follows that $Y_n = u_n + \widehat{V}(t_n,u_n), n = 0,1,\cdots,N$. Together with $X_T = U(T) + \widehat{V}(T,U(T))$ and $\psi(u) := \varphi(u + \widehat{V}(T,u))$, we obtain
      \begin{align}\label{eq:transform_to_w_correct_B}
            \E\big[\varphi(X_T)\big] - \E\big[\varphi(Y_N)\big]
            =&~
            \E\big[\varphi\big( U(T) + \widehat{V}(T,U(T)) \big)\big]
            -
            \E\big[\varphi\big(u_N + \widehat{V}(T,u_N)\big)\big] \notag
            \\=&~
            \E\big[\psi(U(T))\big] - \E\big[\psi(u_N)\big] \notag 
            \\=&~
            \E\big[\psi(U(t_0,u_0;T))\big]
            -
            \E\big[\psi(U(t_N,u_N;T))\big] \notag
            \\=&~
            \E\big[w(t_0,u_0)\big]
            -
            \E\big[w(t_N,u_N)\big].
      \end{align}
      By the flow property of the strong solution, it holds that for any $0 \leq t \leq s \leq T$ and $u \in \R^{d}$,
      \begin{align*}
            U(t,u;T) = U(s,U(t,u;s);T), \quad \P\text{-a.s.}        
      \end{align*}
      Since $f$ and $g$ are deterministic and the Brownian increments after time $s$ are independent of $\F_s$, the solution $U(t,u;\cdot)$ is a time-inhomogeneous Markov process \cite{mao2008stochastic}, we have
      \begin{equation}\label{eq:w_semigroup_B}
            w(t,u)
            =
            \E\big[\psi(U(s,U(t,u;s);T))\big]  
            =
            \E\big[\E\big(\psi(U(s,U(t,u;s);T))
            \mid \F_s \big) \big]  
            =
            \E\big[w(s,U(t,u;s))\big]
      \end{equation}
      for any $0 \leq t \leq s \leq T$, which implies $\E\big[w(t_{n},u_{n})\big] = \E\big[w\big(t_{n+1},U(t_{n},u_{n};t_{n+1})\big)\big], n = 0,1,\cdots,N-1$.
      As a consequence, we obtain
      \begin{align}\label{eq:A4_sde_B}
            \E\big[w(t_0,u_0)\big] - \E\big[w(t_N,u_N)\big]
            =
            \sum_{i=0}^{N-1}\big(
            \E\big[w(t_{i+1},U(t_i,u_i;t_{i+1}))\big]
            -
            \E\big[w(t_{i+1},u_{i+1})\big]\big).
      \end{align}
      Using $u_{n+1} = \widetilde{u}(t_{n},u_{n};t_{n+1})$, \textup{(iv)} and $|u_{n}| \leq |P|\,|Y_{n}|$ enables us to get
      \begin{align*}
            &~\big|\E\big[w(t_0,u_0)\big] - \E\big[w(t_N,u_N)\big]\big|
            \\\leq&~
            \sum_{n=0}^{N-1}\big|
            \E\big[w(t_{n+1},U(t_{n},u_{n};t_{n+1}))\big]
            -
            \E\big[w(t_{n+1},\widetilde{u}(t_{n},u_{n};t_{n+1}))\big]\big|
            \\\leq&~
            Ch^{p+1} \sum_{n=0}^{N-1} \big(1 + \E\big[|u_{n}|^{q}\big]\big)
            \leq
            Ch^{p+1} \sum_{n=0}^{N-1} \big(1 + \E\big[|Y_{n}|^{q}\big]\big),
      \end{align*}
      which together with  \textup{(ii)} implies the desired result and thus completes the proof.
\end{proof}

\subsection{Verification for the stochastic theta method}
We now verify the assumptions of Theorem \ref{thm:weak_convergence_sdae} for the stochastic theta method. The verification consists of well-posedness and constraint preservation, moment estimates, regularity of the Kolmogorov function, and the one-step weak local error estimate.

\subsubsection{Well-posedness and constraint preservation}
We first prove that the stochastic theta method is well defined and preserves the algebraic constraints at all time levels.
\begin{proposition}\label{thm:wp_invariance}
      Suppose that Assumptions \ref{ass:At-structure}, \ref{ass:sdae-structure}, \ref{ass:coeff}, \ref{ass:FG-in-H} and \ref{ass:time_reg_merged} hold, and let $\theta \in (0,1]$. Then there exists a constant $h_{\mathrm{wp}} \in (0,1)$, independent of $n$, such that for every $h \in (0,h_{\mathrm{wp}})$, the stochastic theta method \eqref{eq:theta-scheme} admits a unique adapted numerical solution $\{Y_n\}_{n = 0}^{N}$ and satisfies $\P(Y_{n} \in \mathcal{M}_{t_n}) = 1$ for every $n = 0,1,\cdots,N$.
\end{proposition}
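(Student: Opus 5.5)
The plan is to argue by induction on $n$, treating the algebraic and the differential parts of the implicit equation \eqref{eq:theta-scheme} separately, using the fixed projectors $P,Q,R$ and the reconstruction map $\widehat{V}$. The algebraic part is handled by multiplying by $R$, which forces the new iterate onto $\mathcal{M}_{t_{n+1}}$. The differential part is handled by multiplying by $A_{t_n}^{-}$, which yields a contraction for $u=PY_{n+1}$.

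\emph{Step 1 (constraint propagation).} Suppose $Y_n$ is $\F_{t_n}$-measurable with $RF(t_n,Y_n)=0$; for $n=0$ this holds by Assumption \ref{ass:sdae-structure}. Multiply \eqref{eq:theta-scheme} by $R$ and use $RA_{t_n}=0$ and $RG=0$. Any solution $Y_{n+1}$ then satisfies
\begin{equation*}
0 = h(1-\theta)RF(t_n,Y_n) + h\theta RF(t_{n+1},Y_{n+1}) = h\theta RF(t_{n+1},Y_{n+1}).
\end{equation*}
Since $\theta>0$ and $h>0$, this gives $Y_{n+1}\in\mathcal{M}_{t_{n+1}}$. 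This is exactly where $\theta>0$ is used.

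Every $v\in\operatorname{Im}(Q)=\operatorname{Ker}(A_t)$ satisfies $A_{t}v=0$, so \eqref{eq:Vhat_alg} reduces on $\operatorname{Im}(Q)$ to $RF(t,u+v)=0$. The uniqueness of $\widehat{V}$ then gives $QY_{n+1}=\widehat{V}(t_{n+1},PY_{n+1})$. Hence every solution has the form $Y_{n+1}=u+\widehat{V}(t_{n+1},u)$ with $u=PY_{n+1}\in\operatorname{Im}(P)$.

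\emph{Step 2 (reduction to a fixed point).} Multiply \eqref{eq:theta-scheme} by $A_{t_n}^{-}$ and use $A_{t_n}^{-}A_{t_n}=P$. With $u:=PY_{n+1}$ this gives $u=\mathcal{T}_n(u)$, where
\begin{equation*}
\mathcal{T}_n(u) := PY_n + hA_{t_n}^{-}\Big((1-\theta)F(t_n,Y_n)+\theta F\big(t_{n+1},u+\widehat{V}(t_{n+1},u)\big)\Big) + A_{t_n}^{-}G(t_n,Y_n)\Delta W_n .
\end{equation*}
The image of $A_{t_n}^{-}=N^\top\Sigma_{t_n}^{-}M^\top$ lies in $\operatorname{Im}(P)$, so $\mathcal{T}_n$ maps the closed subspace $\operatorname{Im}(P)$ into itself. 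By Lemma \ref{lem:A-inv-Lipschitz}, Assumption \ref{ass:coeff} and \eqref{eq:Vhat-Lip-u},
\begin{equation*}
|\mathcal{T}_n(u_1)-\mathcal{T}_n(u_2)|\le h\theta\frac{\sqrt r}{\underline\sigma}L(1+L_V)|u_1-u_2| .
\end{equation*}
So for $h<h_{\mathrm{wp}}:=\min\{1/2,\ \underline\sigma/(2\theta\sqrt r L(1+L_V))\}$ the map $\mathcal{T}_n$ is a contraction, uniformly in $n$ and $\omega$. Banach's theorem then yields a unique fixed point $u^\ast$.

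\emph{Step 3 (converse and uniqueness).} Set $Y_{n+1}:=u^\ast+\widehat{V}(t_{n+1},u^\ast)$. We must check that $Y_{n+1}$ solves the full equation \eqref{eq:theta-scheme}, not only its $A^{-}$-projected version. Apply $A_{t_n}$ to $u^\ast=\mathcal{T}_n(u^\ast)$ and use $A_{t_n}A_{t_n}^{-}=I-R$, $A_{t_n}P=A_{t_n}$ and $A_{t_n}\widehat{V}=0$. This recovers \eqref{eq:theta-scheme} up to the $R$-component of its right-hand side. That component is $h(1-\theta)RF(t_n,Y_n)+h\theta RF(t_{n+1},Y_{n+1})$, which vanishes by the induction hypothesis and by the defining property of $\widehat{V}$. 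Hence $Y_{n+1}$ is a solution. By Step 1, any other solution has the form $u+\widehat{V}(t_{n+1},u)$ with $u$ a fixed point of $\mathcal{T}_n$, so uniqueness follows from that of $u^\ast$.

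\emph{Step 4 (adaptedness).} $u^\ast$ is the pointwise limit of the Picard iterates $u^{(k+1)}=\mathcal{T}_n(u^{(k)})$ started from $u^{(0)}=PY_n$. Each iterate is a continuous function of $(Y_n,\Delta W_n)$ and hence $\F_{t_{n+1}}$-measurable, so the limit is too. Continuity of $\widehat{V}$ then makes $Y_{n+1}$ $\F_{t_{n+1}}$-measurable, which closes the induction.

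I expect the main subtlety to be the converse direction in Step 3, since multiplying by $A_{t_n}^{-}$ loses the $R$-component of the equation. It is resolved precisely by combining the constraint at level $n$ with the definition of $\widehat{V}$ at level $t_{n+1}$. The time-independence of $P$, $Q$ and $R$ from Assumption \ref{ass:At-structure} is what lets matrices evaluated at $t_n$ and $t_{n+1}$ be mixed.
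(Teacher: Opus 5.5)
Your proposal is correct and follows essentially the same route as the paper: induction on $n$, projection by $R$ to propagate the constraint (where $\theta>0$ is used), and projection by $A_{t_n}^{-}$ to obtain a contraction on $\operatorname{Im}(P)$ (your $\mathcal{T}_n$ is the paper's $\Gamma_n$). The remaining steps also match: reconstruction via $\widehat{V}(t_{n+1},\cdot)$, recovery of the discarded $R$-component (you use $A_{t_n}A_{t_n}^{-}=I-R$ where the paper uses $\ker(A_{t_n}^{-})=\operatorname{Im}(R)$, which is equivalent), and uniqueness through the unique fixed point, with your Step 4 on measurability of the Picard limit usefully supplying the adaptedness argument that the paper only asserts.
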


\begin{proof}
      Since $Y_{0} = X_{0} \in \mathcal{M}_{0}$, it suffices to argue by mathematical induction. Assume that $\P(Y_{n} \in \mathcal{M}_{t_n}) = 1$ for some fixed $n$. Letting $u_n := PY_n$ gives $Y_n = u_n + \widehat{V}(t_n,u_n)$. Since $P = A_t^{-}A_t$, we have $PA_t^{-} = A_t^{-}$, and hence $\operatorname{Im}(A_t^{-})\subseteq \operatorname{Im}(P)$. Define
      \begin{align*}
            Z_n
            :=
            u_n + h(1-\theta)A_{t_n}^{-}F(t_n,Y_n)
            +
            A_{t_n}^{-}G(t_n,Y_n)\Delta W_n \in \operatorname{Im}(P).
      \end{align*}
      and $\Gamma_n(u)
            :=
            Z_n
            +
            h\theta A_{t_n}^{-}
            F\bigl(t_{n+1},u+\widehat{V}(t_{n+1},u)\bigr),
            u \in \operatorname{Im}(P)$.
      Noting that \(\widehat{V}(t_{n+1},u) \in \operatorname{Im}(Q)\) and $A_{t_n}^{-}F\bigl(t_{n+1},u+\widehat{V}(t_{n+1},u)\bigr) \in \operatorname{Im}(A_{t_n}^{-}) \subseteq\operatorname{Im}(P)$, we have $\Gamma_{n} \colon \operatorname{Im}(P) \to \operatorname{Im}(P)$.
      For any $u_1, u_2 \in \operatorname{Im}(P)$, using the Lipschitz continuity of $F$ with respect to the space variable in Assumption \ref{ass:coeff} and the Lipschitz continuity of $\widehat{V}(t,\cdot)$ in Lemma \ref{lem:Vhat_basic}, we obtain
      \begin{align*}
            |\Gamma_n(u_1)-\Gamma_n(u_2)|
            \leq&~
            h\theta |A_{t_n}^{-}|\,
            \bigl| F(t_{n+1},u_1 + \widehat{V}(t_{n+1},u_1))
            - F(t_{n+1},u_2+\widehat{V}(t_{n+1},u_2)) \bigr|  
            \\\leq&~
            h\theta |A_{t_n}^{-}|L\big(|u_1-u_2|
            + |\widehat{V}(t_{n+1},u_1) - \widehat{V}(t_{n+1},u_2)|\big) 
            \\\leq&~
            hK_{\theta} |u_1-u_2|,
      \end{align*}
      where $K_{\theta} := \theta \sup_{t\in[0,T]}|A_t^{-}|\,L(1+L_V)$.  Choosing $h_{\mathrm{wp}} \in (0,1)$ such that $h_{\mathrm{wp}}K_{\theta} < 1$, $\Gamma_{n}$ is a contraction on the complete metric space $\operatorname{Im}(P)$ for any $h \in (0,h_{\mathrm{wp}})$. It follows that there exists a unique fixed point $u_{n+1} \in \operatorname{Im}(P)$ satisfying
      \begin{equation}\label{eq:un-fixed-point}
            u_{n+1}
            =
            Z_n
            +
            h\theta A_{t_n}^{-}F\big(t_{n+1},
            u_{n+1}+\widehat{V}(t_{n+1},u_{n+1})\big).
      \end{equation}
      Defining
      \begin{equation}\label{eq:Y-def_final}
            Y_{n+1} := u_{n+1} + \widehat{V}(t_{n+1},u_{n+1})
      \end{equation}
      and using the property of $\widehat{V}$, we have $RF(t_{n+1},Y_{n+1}) = 0$, which implies $\P(Y_{n+1} \in \mathcal{M}_{t_{n+1}}) = 1$.

      We next show that $Y_{n+1}$ defined by \eqref{eq:Y-def_final} indeed solves \eqref{eq:theta-scheme}. Letting
      \begin{align*}
            \mathcal{E}_{n}
            :=
            A_{t_n}Y_{n+1} - A_{t_n}Y_n
            - h\big( (1-\theta)F(t_n,Y_n) + \theta F(t_{n+1},Y_{n+1})\big)  
            - G(t_n,Y_n)\Delta{W_n}
      \end{align*}
      and using $A_{t_n}^{-}A_{t_n} = P$, $PY_n = u_n$, $PY_{n+1} = u_{n+1}$ yield
      \begin{align*}
            A_{t_n}^{-}\mathcal{E}_{n}
            =
            u_{n+1} - u_n - h(1-\theta)A_{t_n}^{-}F(t_n,Y_n)
            -
            h\theta A_{t_n}^{-}F(t_{n+1},Y_{n+1})  
            -
            A_{t_n}^{-}G(t_n,Y_n)\Delta W_n.
      \end{align*}
      Owing to \eqref{eq:un-fixed-point}, we obtain $A_{t_n}^{-} \mathcal{E}_{n} = 0$, and thus $\mathcal{E}_{n} \in \ker(A_{t_n}^{-})$. For the Moore--Penrose inverse associated with the decomposition in Assumption \ref{ass:At-structure}, one has $\ker(A_{t_n}^{-}) = \operatorname{Im}(R)$. It follows that $\mathcal{E}_{n} \in \operatorname{Im}(R)$. Besides, applying $R$ to $\mathcal{E}_{n}$, and using $RA_{t_n} = 0, RG(t_n,\cdot)=0$ lead to
      \begin{align*}
            R\mathcal{E}_{n}
            =
            -h\big( (1-\theta)RF(t_n,Y_n) + \theta RF(t_{n+1},Y_{n+1}) \big).
      \end{align*}
      As $Y_{n} \in \mathcal{M}_{t_n}$  and $Y_{n+1}\in\mathcal M_{t_{n+1}}$, we have $RF(t_n,Y_n) = 0, RF(t_{n+1},Y_{n+1}) = 0$, which implies $R\mathcal{E}_{n} = 0$. Together with $\mathcal{E}_{n} \in \operatorname{Im}(R)$, we deduce that $\mathcal{E}_{n} = R\mathcal{E}_{n} = 0$, which means that $Y_{n+1}$ solves \eqref{eq:theta-scheme}. This proves existence.

      It remains to prove uniqueness. Let $\widetilde{Y}_{n+1}$ be any solution of \eqref{eq:theta-scheme}. Applying $R$ to \eqref{eq:theta-scheme} gives
      \begin{align*}
            0
            =
            h\big( (1-\theta)RF(t_n,Y_n)
            +
            \theta RF(t_{n+1},\widetilde Y_{n+1}) \big).
      \end{align*}
      Owing to $RF(t_n,Y_n) = 0$ and $\theta > 0$, we get $RF(t_{n+1},\widetilde{Y}_{n+1}) = 0$, 
      and therefore $\widetilde{Y}_{n+1} \in \mathcal{M}_{t_{n+1}}$. Setting $\widetilde{u}_{n+1} := P\widetilde{Y}_{n+1}$ and using the algebraic reconstruction gives $\widetilde{Y}_{n+1} = \widetilde{u}_{n+1}
      + \widehat{V}(t_{n+1},\widetilde{u}_{n+1})$. Applying $A_{t_n}^{-}$ to \eqref{eq:theta-scheme} gives $\widetilde{u}_{n+1} = Z_n + h\theta A_{t_n}^{-}F(t_{n+1},\widetilde{Y}_{n+1})$. It follows that
      \begin{align*}
            \widetilde{u}_{n+1}
            =
            Z_n
            +
            h\theta A_{t_n}^{-} F\big(t_{n+1},\widetilde{u}_{n+1}
            +
            \widehat{V}(t_{n+1},\widetilde{u}_{n+1})\big)
            =
            \Gamma_n(\widetilde{u}_{n+1}),
      \end{align*}
      that is to say, $\widetilde{u}_{n+1}$ is a fixed point of $\Gamma_n$. Since $\Gamma_n$ has a unique fixed point in $\operatorname{Im}(P)$, we have $\widetilde{u}_{n+1} = u_{n+1}$, and consequently $\widetilde{Y}_{n+1} = Y_{n+1}$. This proves uniqueness.
\end{proof}

The role of the restriction $\theta > 0$ is clarified in the following remark.
\begin{remark}\label{rk:choiceoftheta}
      The restriction $\theta > 0$ is essential for the present formulation. If $\theta = 0$, then the scheme reduces to
      \begin{equation*}
            A_{t_n}Y_{n+1}
            =
            A_{t_n}Y_n + hF(t_n,Y_n)
            +
            G(t_n,Y_n)\Delta{W_n}.
      \end{equation*}
      Since $A_{t_n}$ is singular, components of $Y_{n+1}$ in $\ker(A_{t_n}) = \operatorname{Im}(Q)$ are not determined by this equation. Hence the explicit update does not uniquely fix the algebraic component of $Y_{n+1}$. Moreover, applying $R$ to the explicit scheme only yields $RF(t_n,Y_n) = 0$, which gives no information about $RF(t_{n+1},Y_{n+1})$. Thus the constraint $Y_{n+1} \in \mathcal{M}_{t_{n+1}}$ is not automatically enforced.

      By contrast, when $\theta > 0$, applying $R$ to the stochastic $\theta$-scheme and using $RA_{t_n} = 0$, $RG(t_n,\cdot) = 0$, and $Y_n \in \mathcal{M}_{t_n}$, gives $h \theta RF(t_{n+1},Y_{n+1}) = 0$. It follows that $RF(t_{n+1},Y_{n+1}) = 0$, and hence $Y_{n+1} \in \mathcal{M}_{t_{n+1}}$. The next iterate can then be uniquely reconstructed in the form
      \begin{equation*}
            Y_{n+1}
            =
            u_{n+1}
            +
            \widehat{V}(t_{n+1},u_{n+1}).
      \end{equation*}
      This explains why the well-posedness result is formulated for $\theta \in (0,1]$.
\end{remark}

\subsubsection{Moment estimates}
We next establish moment and increment estimates for the stochastic theta method. These estimates are needed to control the accumulation of local weak errors in the abstract weak convergence theorem. Since the method preserves the algebraic constraints, the numerical solution can be represented through its differential component and the algebraic reconstruction map. For convenience, set $u_n := PY_n, n = 0,1,\cdots,N$.

\begin{proposition}\label{lem:moment_Y}
      Suppose that Assumptions \ref{ass:At-structure}, \ref{ass:sdae-structure}, \ref{ass:coeff}, \ref{ass:FG-in-H} and \ref{ass:time_reg_merged} hold, and let $\theta \in (0,1]$. Then there exists a constant $h_{0} \in (0,h_{\mathrm{wp}}]$ such that for any $h \in (0,h_{0}]$ and any $q \geq 2$, there exists constants $C_{q},C > 0$, independent of $h,n,N$, such that
      \begin{gather}
            \label{eq:unqYnqmoment}
            \E\big[|u_{n}|^{q}\big] \leq C_{q}(1 + |u_{0}|^{q}),
            \quad
            \E\big[|Y_n|^{q}\big] \leq C_{q}(1 + |Y_{0}|^{q}),
            \quad n = 0,1,\cdots,N,
            \\\label{eq:Yn1minusYnq}
            \E\big(|Y_{n+1}-Y_n|^{q} \mid \F_{t_n} \big)
            \leq
            C_{q} h^{\frac{q}{2}}(1 + |Y_{n}|^{q}),
            \quad n = 0,1,\cdots,N-1,
            \\\label{eq:Yn1minusYn}
            \big|\E\big(Y_{n+1}-Y_{n} \mid \F_{t_n} \big)\big|
            \leq
            Ch(1 + |Y_{n}|^{2}),
            \quad n = 0,1,\cdots,N-1.
      \end{gather}
\end{proposition}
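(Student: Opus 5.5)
Everything will be done at the level of the differential component. By Proposition~\ref{thm:wp_invariance}, for $h<h_{\mathrm{wp}}$ we have $Y_n=u_n+\widehat{V}(t_n,u_n)$ almost surely. By the linear growth \eqref{eq:Vhat-linear-growth}, $|Y_n|\le C(1+|u_n|)$ and $|u_n|\le |P||Y_n|$, so it suffices to bound the moments of $u_n$. From the fixed-point identity \eqref{eq:un-fixed-point} we obtain
\begin{equation*}
u_{n+1}=u_n+h(1-\theta)A_{t_n}^{-}F(t_n,Y_n)+h\theta A_{t_n}^{-}F(t_{n+1},Y_{n+1})+A_{t_n}^{-}G(t_n,Y_n)\Delta W_n .
\end{equation*}
The boundedness of $A_t^{-}$ (Lemma~\ref{lem:A-inv-Lipschitz}) and the linear growth of $F,G$ and $\widehat V$ give $|A_{t_n}^{-}F(t_{n+1},Y_{n+1})|\le C_1(1+|u_{n+1}|)$. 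Choose $h_0\le h_{\mathrm{wp}}$ with $h_0\theta C_1\le 1/2$. Then the implicit term can be absorbed:
\begin{equation*}
|u_{n+1}|\le 2\bigl|u_n+h(1-\theta)A_{t_n}^{-}F(t_n,Y_n)+A_{t_n}^{-}G(t_n,Y_n)\Delta W_n\bigr|+Ch .
\end{equation*}
This crude bound loses a factor $2$ per step, which is too much for a Gronwall argument. So I would refine it. Write $u_{n+1}=\mathcal{Z}_n+h\theta A_{t_n}^{-}F(t_{n+1},Y_{n+1})$, where $\mathcal{Z}_n$ is the explicit part. Then $(1-Ch)|u_{n+1}|\le |\mathcal{Z}_n|+Ch$, hence $|u_{n+1}|\le (1+Ch)|\mathcal{Z}_n|+Ch$ for $h\le h_0$.

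For $|\mathcal{Z}_n|^q$, take the standard expansion $|\mathcal{Z}_n|^2=|u_n|^2+2\langle u_n,\,h(\cdot)+A^-G\Delta W_n\rangle+|h(\cdot)+A^-G\Delta W_n|^2$. Here $Y_n$ is $\F_{t_n}$-measurable and $\Delta W_n$ is independent of $\F_{t_n}$ with $\E\Delta W_n=0$ and $\E|\Delta W_n|^{2k}\le Ch^k$. Raising to the power $q/2$ and using the binomial expansion, the odd terms linear in $\Delta W_n$ vanish under conditional expectation. This gives
\begin{equation*}
\E\bigl(|\mathcal{Z}_n|^q\mid\F_{t_n}\bigr)\le (1+Ch)|u_n|^q+Ch .
\end{equation*}
Combined with the refined bound this yields $\E|u_{n+1}|^q\le (1+Ch)\E|u_n|^q+Ch$. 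The discrete Gronwall inequality with $nh\le T$ then gives \eqref{eq:unqYnqmoment}. I expect this bookkeeping of the $(q/2)$-power expansion to be the main (routine but fiddly) obstacle. One could alternatively avoid it by first treating $q=2^k$ by induction.

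For \eqref{eq:Yn1minusYnq}, first note $|u_{n+1}-u_n|\le Ch(1+|u_n|+|u_{n+1}|)+C(1+|u_n|)|\Delta W_n|$. Absorbing $|u_{n+1}|\le |u_n|+|u_{n+1}-u_n|$ for small $h$ gives $|u_{n+1}-u_n|\le C(1+|u_n|)(h+|\Delta W_n|)$. Taking conditional $q$-th moments yields $C h^{q/2}(1+|Y_n|^q)$. Then
\begin{equation*}
Y_{n+1}-Y_n=(u_{n+1}-u_n)+\bigl(\widehat V(t_{n+1},u_{n+1})-\widehat V(t_{n+1},u_n)\bigr)+\bigl(\widehat V(t_{n+1},u_n)-\widehat V(t_n,u_n)\bigr),
\end{equation*}
and \eqref{eq:Vhat-Lip-u} and \eqref{eq:Vhat-Lip-t} bound this by $C|u_{n+1}-u_n|+C(1+|u_n|)h$, which gives \eqref{eq:Yn1minusYnq}.

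For \eqref{eq:Yn1minusYn}, take conditional expectations in the $u$-recursion. The noise term vanishes, and the two drift terms are bounded by $Ch(1+|u_n|+\E(|u_{n+1}|\mid\F_{t_n}))\le Ch(1+|Y_n|)$. The time-increment part of $\widehat V$ is $O(h(1+|u_n|))$. For the spatial part, Taylor expand using Lemma~\ref{lem:Vhat-spatial-regularity}:
\begin{equation*}
\widehat V(t_{n+1},u_{n+1})-\widehat V(t_{n+1},u_n)=D_u\widehat V(t_{n+1},u_n)(u_{n+1}-u_n)+\mathcal R,\qquad |\mathcal R|\le C|u_{n+1}-u_n|^2 .
\end{equation*}
The conditional mean of the first term is $O(h(1+|Y_n|))$ by the previous step, since $D_u\widehat V$ is bounded and $\F_{t_n}$-measurable. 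The remainder has conditional mean at most $Ch(1+|Y_n|^2)$ by \eqref{eq:Yn1minusYnq} with $q=2$. This produces the quadratic factor in \eqref{eq:Yn1minusYn}.
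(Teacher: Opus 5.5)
Your proposal is correct. For \eqref{eq:Yn1minusYnq} and \eqref{eq:Yn1minusYn} it matches the paper's argument: pathwise absorption gives $|\Delta u_n|\le C(1+|u_n|)(h+|\Delta W_n|)$, the Lipschitz properties of $\widehat V$ handle the increment, and a second-order Taylor expansion of $\widehat V(t_{n+1},\cdot)$ about $u_n$ produces the quadratic factor.

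The $q$-th moment bound \eqref{eq:unqYnqmoment} is obtained by a different route. The paper never separates the implicit term. It first proves, for the full increment, $\E(|\Delta u_n|^r\mid\F_{t_n})\le Ch^{r/2}(1+|u_n|^r)$ and $|\E(\Delta u_n\mid\F_{t_n})|\le Ch(1+|u_n|)$. The second estimate needs the auxiliary bound $\E(|Y_{n+1}|^2\mid\F_{t_n})\le C(1+|u_n|^2)$. It then inserts $x=u_n$, $y=\Delta u_n$ into $|x+y|^q\le|x|^q+q|x|^{q-2}\langle x,y\rangle+C_q|x|^{q-2}|y|^2+C_q|y|^q$.

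You instead strip off the implicit drift pathwise, obtaining $|u_{n+1}|\le(1+Ch)|\mathcal Z_n|+Ch$. This reduces the problem to the textbook moment bound for one explicit Euler step with linearly growing coefficients $f(t_n,\cdot)$, $g(t_n,\cdot)$. Your route is more modular: the implicitness is handled by a purely deterministic absorption, and the stochastic part is a standard lemma. The paper's route is more economical: the same two local increment estimates serve all three claims. It also applies verbatim to any one-step scheme whose increment has $O(h)$ conditional mean and $O(h^{r/2})$ conditional $r$-th moments.

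Two steps you glossed should be written out.
\begin{itemize}
\item Passing from $|u_{n+1}|\le(1+Ch)|\mathcal Z_n|+Ch$ to $|u_{n+1}|^q\le(1+Ch)|\mathcal Z_n|^q+Ch$ uses $(a+b)^q\le(1+h)^{q-1}a^q+(1+h^{-1})^{q-1}b^q$ with $b=Ch$, together with $(1+h^{-1})^{q-1}h^q\le 2^{q-1}h$.
\item For $q/2\notin\N$ the binomial expansion is infinite. Either apply the Taylor-type inequality above to $\mathcal Z_n=u_n+y$, or use your $q=2^k$ route followed by the Lyapunov inequality.
\end{itemize}
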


\begin{proof}
      For any $n = 0,1,\cdots,N$, using $u_n := PY_n$ and $\P(Y_n \in \mathcal{M}_{t_n}) = 1$ yields $Y_{n} = u_{n} + \widehat{V}(t_n,u_n)$. Together with $PA_t^{-} = A_t^{-}$ and \eqref{eq:coff}, applying $A_{t_n}^{-}$ to \eqref{eq:theta-scheme}  gives 
      \begin{align}\label{eq:Deltaun}
            u_{n+1}
            =
            u_n + h(1-\theta)f(t_n,u_n)
            +
            h\theta A_{t_n}^{-}F(t_{n+1},Y_{n+1})
            +
            g(t_n,u_n)\Delta{W_n}.          
      \end{align}
      We first derive several local estimates for $\Delta u_n :=u_{n+1}-u_n$. By the linear growth of $f, F, g$, the boundedness of $A_t^{-}$, and the linear growth of $\widehat{V}$ in Lemma \ref{lem:Vhat_basic}, there exists a constant $C_{0} > 0$, independent of $h,n,N$, such that for any $n = 0,1,\cdots,N$,
      \begin{align*}
            |\Delta u_n|
            \leq&~
            Ch(1+|u_n|) + Ch(1+|Y_{n+1}|)
            +
            C(1+|u_n|)|\Delta W_n|  
            \\\leq&~
            C_{0}h(1+|u_n|) + C_{0}h\bigl(1+|u_{n+1}|\bigr)
            +
            C_{0}(1+|u_n|)|\Delta W_n|  
            \\\leq&~
            C_{0}h(1+2|u_n|+|\Delta u_n|)
            +
            C_{0}(1+|u_n|)|\Delta W_n|.
      \end{align*}
      Choosing $h_{0} := \min\big\{h_{\mathrm{wp}},\frac{1}{2C_{0}},1\big\} > 0$ and employing $1-C_{0}h \geq \frac{1}{2}$ for any $h \in (0, h_0]$,  we obtain
      \begin{equation}\label{eq:Delta-u-pathwise-optimized}
            |\Delta u_n| \leq C(1+|u_n|)\bigl(h+|\Delta W_n|\bigr).
      \end{equation}
      It follows from $\E\big[|\Delta W_n|^{r}\big] \leq C_r h^{\frac{r}{2}}, r \geq 2$ that
      \begin{equation}\label{eq:Delta-u-local-moment-optimized}
            \E\big(|\Delta u_n|^{r} \mid \F_{t_n}\big)
            \leq
            C_{r} h^{\frac{r}{2}}(1+|u_n|^r).
      \end{equation}
      We also need a conditional first-moment estimate for $\Delta u_n$. By means of $Y_{n+1} = u_{n+1} + \widehat{V}(t_{n+1},u_{n+1})$, the linear growth of $\widehat{V}$ in Lemma \ref{lem:Vhat_basic} and $u_{n+1} = u_{n} + \Delta{u_{n}}$, we have
      \begin{align*}
            |Y_{n+1}|^2
            \leq
            C(1 + |u_{n+1}|^2)
            \leq
            C(1 + |u_n|^2 + |\Delta u_n|^2).
      \end{align*}
      Combining with \eqref{eq:Delta-u-pathwise-optimized} leads to
      \begin{align}\label{eq:EYnplus1Ftn}
           \E\big(|Y_{n+1}|^{2} \mid \F_{t_n}\big)
           \leq&~
           C(1+|u_n|^2)
           +
           C\E\big(|\Delta u_n|^2 \mid \F_{t_n}\big) \notag
           \\\leq&~
           C(1+|u_n|^2)
           +
           C(1+|u_n|^2)\E\bigl[(h+|\Delta W_n|)^2\bigr] \notag
           \\\leq&~
           C(1+|u_n|^2)+Ch(1+|u_n|^2) \notag
           \\\leq&~
           C(1+|u_n|^2).
      \end{align}
      Owing to \eqref{eq:Deltaun} and $\E\big(\Delta W_n \mid \F_{t_n}) = 0$, we have
      \begin{align*}
            \E\big(\Delta u_n \mid \F_{t_n}\big)
            =
            h(1-\theta)f(t_n,u_n)
            +
            h\theta A_{t_n}^{-}
            \E\big(F(t_{n+1},Y_{n+1}) \mid \F_{t_n} \big).
      \end{align*}
      Using the linear growth of $f$ and $F$, the boundedness of $A_t^{-}$ and \eqref{eq:EYnplus1Ftn}, we obtain
      \begin{equation}\label{eq:Delta-u-mean-optimized}
            \left|\E\big(\Delta u_n \mid \F_{t_n}\big)\right|
            \leq
            Ch(1+|u_n|).
      \end{equation}

      We now prove \eqref{eq:unqYnqmoment}. Applying the standard inequality
      \begin{align*}
            |x+y|^q
            \leq
            |x|^q + q|x|^{q-2} \langle x,y \rangle
            +
            C_q |x|^{q-2}|y|^2 + C_q |y|^q,
            \quad  q \geq 2, x,y \in \R^{d}
      \end{align*}
      with $x = u_n$ and $y = \Delta u_n$ (see, e.g., \cite{chen2026weak}) and taking conditional expectation show that
      \begin{align*}
            \E\big(|u_{n+1}|^q \mid \F_{t_n} \big)
            \leq&~
            |u_n|^q + C_q |u_n|^{q-1} \left|\E\big(\Delta u_n \mid \F_{t_n} \big)\right|  
            \\&~
            + C_q |u_n|^{q-2}\E\big( |\Delta u_n|^2 \mid \F_{t_n} \big)
            + C_q\E\big(|\Delta u_n|^{q} \mid \F_{t_n}\big).
      \end{align*}
      Because of \eqref{eq:Delta-u-local-moment-optimized} and \eqref{eq:Delta-u-mean-optimized}, we get
      \begin{align*}
            \E\big(|u_{n+1}|^q \mid \F_{t_n} \big)
            \leq&~
            |u_n|^q + C_qh|u_n|^{q-1}(1+|u_n|)
            +
            C_qh|u_n|^{q-2}(1+|u_n|^2)  
            + 
            C_qh^{q/2}(1+|u_n|^q)  
            \\\leq&~
            |u_n|^q+C_qh(1+|u_n|^q),
      \end{align*}
      and thus $\E\big[|u_{n+1}|^{q}\big] \leq (1+C_qh) \E\big[|u_n|^q\big] + C_{q}h$. Then the discrete Gronwall inequality implies the first result in \eqref{eq:unqYnqmoment}. Next, the linear growth of $\widehat{V}$ and $ Y_n = u_n + \widehat{V}(t_n,u_n)$ indicate $|Y_n|^q \leq C_q(1+|u_n|^q)$ and thus
      \begin{align*}      
            \E\big[|Y_n|^{q}\big]
            \leq
            C_q\left(1+ \E\big[|u_n|^q\big]\right)
            \le
            C_q(1+|u_0|^q),
      \end{align*}
      which together with $|u_0| \le |P|\,|Y_0|$ proves the second result in \eqref{eq:unqYnqmoment}.

      We now prove the increment estimate \eqref{eq:Yn1minusYnq}. Applying Lemma \ref{lem:Vhat_basic} yields
      \begin{align*}
            |Y_{n+1}-Y_n|
            \leq&~
            |\Delta u_n|
            +
            |\widehat{V}(t_{n+1},u_{n+1})-\widehat{V}(t_{n+1},u_n)|  
            +
            |\widehat{V}(t_{n+1},u_n)-\widehat{V}(t_n,u_n)|  
            \\\leq&~
            C|\Delta u_n| + Ch(1+|u_n|).
      \end{align*}
      It follow from \eqref{eq:Delta-u-local-moment-optimized} that
      \begin{align*}
            \E\big(|Y_{n+1}-Y_n|^q \mid \F_{t_n}\big)
            \leq
            C_q\E\big(|\Delta u_n|^q \mid \F_{t_n}\big)
            +
            C_qh^q(1+|u_n|^q)  
            \leq
            C_qh^{\frac{q}{2}}(1+|u_n|^q),
      \end{align*}
      which along with $|u_n| \leq |P|\,|Y_n|$ ensures \eqref{eq:Yn1minusYnq}. It remains to prove \eqref{eq:Yn1minusYn}. From the decomposition
      \begin{align*}
            Y_{n+1}-Y_n
            =&~
            \Delta u_n + \widehat{V}(t_{n+1},u_n) - \widehat{V}(t_n,u_n)  
            +
            \widehat{V}(t_{n+1},u_{n+1}) - \widehat{V}(t_{n+1},u_n)
            \\=&~
            \Delta u_n + \widehat{V}(t_{n+1},u_n) - \widehat{V}(t_n,u_n)  
            +
            D_u\widehat{V}(t_{n+1},u_n)\Delta u_n  
            \\&~
            + 
            \int_0^1(1-\rho) D_u^2 \widehat{V}(t_{n+1},u_n+\rho\Delta u_n)
            [\Delta u_n,\Delta u_n]\,d\rho,
      \end{align*}
      we utilize the bonunedness of $D_u\widehat{V}$ and $D_u^2\widehat{V}$ in Lemma \ref{lem:Vhat-spatial-regularity}, and the $\F_{t_n}$-measurability of $D_u\widehat{V}(t_{n+1},u_n)$ to derive that
      \begin{align*}
            \left|\E\big(Y_{n+1}-Y_n \mid \F_{t_n}\big)\right|
            \leq
            C\big|\E\big(\Delta u_n \mid \F_{t_n}\big)\big|
            +
            \big|\widehat{V}(t_{n+1},u_n) - \widehat{V}(t_n,u_n)\big|
            +
            C\E\big(|\Delta u_n|^2 \mid \F_{t_n}\big).
      \end{align*}
      Making use of \eqref{eq:Delta-u-local-moment-optimized}, \eqref{eq:Delta-u-mean-optimized} and the time-continuity estimate for $\widehat{V}$ in Lemma \ref{lem:Vhat_basic} results in
      \begin{align*}
            \left|\mathbb E\bigl[Y_{n+1}-Y_n\mid\mathcal F_{t_n}\bigr]\right|
            \leq
            Ch(1+|u_n|) + Ch(1+|u_n|) + Ch(1+|u_n|^2)  
            \leq
            Ch(1+|u_n|^2).
      \end{align*}
      which in combination with $|u_n| \leq |P|\,|Y_n|$ yields \eqref{eq:Yn1minusYn}. Thus, we complete the proof.
\end{proof}

\subsubsection{Regularity of the Kolmogorov function}
The forthcoming weak error analysis is based on the backward Kolmogorov function associated with the reduced SDE. Since only weak order one is needed for the stochastic theta method, we take $p = 1$ in this subsection. Let $\psi \in C_{\mathrm{pol}}^{4}(\R^{d};\R)$ and define the backward Kolmogorov function 
\begin{equation*}
      w(t,u) := \E\big[\psi(U(t,u;T))\big],
      \quad
      (t,u)\in[0,T]\times\R^{d}.
\end{equation*}
By Lemma \ref{lem:_coeff} and Proposition \ref{lem:flow_regularity_reduced}, together with the standard regularity theory for backward Kolmogorov equations (see, e.g., \cite[Theorem 5.6.1]{friedman1975stochastic}),  $w$ is a classical solution of 
\begin{equation}\label{eq:backward_kolmogorov_w}
\left\{\begin{aligned}
      &\partial_t w(t,u) + \mathcal L_t w(t,u) = 0,
      \quad
      (t,u) \in [0,T) \times \R^{d},
      \\
      &w(T,u) = \psi(u),
\end{aligned}\right.
\end{equation}
where $\mathcal{L}_{t}$ is the second-order differential operator acting on smooth test functions $\phi \colon \R^{d} \to \R$, defined by
\begin{equation}\label{eq:operatorLt}
      \mathcal{L}_{t} \phi(u)
      :=
      \langle f(t,u),D_{u}\phi(u) \rangle
      +
      \frac12 \operatorname{Tr}
      \big(g(t,u)g(t,u)^{\top} D_u^2\phi(u)\big).
\end{equation}
The following lemma collects the regularity estimates for $w$ needed in the local weak error analysis.

\begin{lemma}\label{lem:w_regularity}
      Suppose that Assumptions \ref{ass:At-structure}, \ref{ass:sdae-structure}, \ref{ass:coeff}, \ref{ass:FG-in-H} and \ref{ass:time_reg_merged} hold. Then there exist constants $C > 0$ and $q \geq 0$ such that for all $(t,u) \in [0,T] \times \R^{d}$,
      \begin{equation}\label{eq:w_spatial_growth}
            |D_u^j w(t,u)| \leq C(1+|u|^q), \quad j=0,1,2,3,4.
      \end{equation}
      Moreover, it holds that for all $(t,u) \in [0,T] \times \R^{d}$,
      \begin{equation}\label{eq:w_time_derivative_growth}
            |\partial_t D_u^j w(t,u)|
            \leq
            C(1+|u|^q), \quad j=0,1,2.
      \end{equation}
      Consequently, for any $s,t \in [0,T]$ and $u \in \R^{d}$,
      \begin{equation}\label{eq:w_time_lipschitz}
            |w(t,u)-w(s,u)| + |D_u w(t,u)-D_u w(s,u)| + |D_u^2w(t,u)-D_u^2w(s,u)|
            \leq
            C(1+|u|^q)|t-s|.
      \end{equation}
\end{lemma}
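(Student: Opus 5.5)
The spatial bounds \eqref{eq:w_spatial_growth} come from differentiating $w(t,u)=\E[\psi(U(t,u;T))]$ under the expectation, with $p=1$ so that Proposition \ref{lem:flow_regularity_reduced} supplies four derivatives of the flow. For $j\le 4$, the multivariate Fa\`a di Bruno formula gives
\begin{align*}
D_u^j w(t,u)[\xi_1,\cdots,\xi_j]
=
\E\Big[\sum_{\pi\in\Pi_j} D^{|\pi|}\psi(U(t,u;T))\big[D_u^{|B_1|}U(t,u;T)[\xi_{B_1}],\cdots\big]\Big].
\end{align*}
Differentiation under $\E$ will be justified by mean-square differentiability of the flow together with uniform integrability. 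The latter follows from the polynomial growth of $D^k\psi$, the moment bound \eqref{eq:moment_exact_solution_all_q}, and the uniform derivative bounds \eqref{eq:ass9-derivative-weighted}. Applying H\"older's inequality to each term, the factor $\E[|D^{|\pi|}\psi(U)|^2]^{1/2}\le C(1+|u|^{q})$ comes from \eqref{eq:moment_exact_solution_all_q}, and the product of flow-derivative moments is bounded by a constant. Summing over the finitely many partitions yields \eqref{eq:w_spatial_growth}.

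For the time derivatives \eqref{eq:w_time_derivative_growth}, I would use the backward Kolmogorov equation \eqref{eq:backward_kolmogorov_w}, which gives $\partial_t w=-\mathcal L_t w$. The case $j=0$ is then immediate: $|f|,|g|$ grow linearly by \eqref{eq:linear_growth_fg}, and $D_uw,D_u^2w$ grow polynomially. For $j=1,2$ I would differentiate the equation in $u$ to get $\partial_t D_u^j w=-D_u^j(\mathcal L_t w)$. Expanding by Leibniz's rule produces products of $D_u^k f$, $D_u^k g$ with $k\le j$ and $D_u^i w$ with $i\le j+2\le 4$. The former are bounded by \eqref{eq:f-g-high-der-growth} (or grow linearly when $k=0$), and the latter grow polynomially by the first part. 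This is exactly why four spatial derivatives of $w$ are needed.

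\textbf{Main obstacle.} The delicate point is justifying $\partial_t D_u^j w=D_u^j\partial_t w$, i.e.\ that $D_u^jw$ is differentiable in $t$ with that derivative. I would handle this with an integral identity rather than by commuting derivatives directly. By the Markov property \eqref{eq:w_semigroup_B}, $w(t,u)=\E[w(s,U(t,u;s))]$ for $s\ge t$. Applying It\^o's formula to $w(s,U(t,u;s))$ gives the representation $w(t,u)=w(s,u)+\int_t^s \mathcal L_r w(r,u)\,dr$ for all $t\le s$, which holds because \eqref{eq:backward_kolmogorov_w} holds pointwise. The integrand's $u$-derivatives up to order $2$ are continuous in $(r,u)$ and dominated by $C(1+|u|^q)$, so I can differentiate under the $dr$-integral in $u$. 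This yields $D_u^jw(t,u)-D_u^jw(s,u)=\int_t^s D_u^j(\mathcal L_rw)(r,u)\,dr$, from which both \eqref{eq:w_time_derivative_growth} and, directly, \eqref{eq:w_time_lipschitz} follow. If joint continuity of $D_u^4 w$ in $t$ is in doubt, I would instead obtain it from continuity of the flow derivatives in $t$, again under the expectation.
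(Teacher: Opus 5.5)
Your proposal follows essentially the same route as the paper. For \eqref{eq:w_spatial_growth} both use Fa\`a di Bruno plus uniform integrability; the paper does this via Vitali's theorem with a $\gamma=2$ moment bound. For \eqref{eq:w_time_derivative_growth} both use $\partial_t D_u^j w=-D_u^j(\mathcal L_t w)$ from the Kolmogorov equation together with the Leibniz expansion, and \eqref{eq:w_time_lipschitz} follows from the fundamental theorem of calculus.

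Your integrated form $D_u^jw(t,u)-D_u^jw(s,u)=\int_t^s D_u^j(\mathcal L_rw)(r,u)\,dr$ is a slightly more careful justification of the interchange of $\partial_t$ and $D_u^j$, which the paper simply asserts. Note, however, that the identity $w(t,u)=w(s,u)+\int_t^s\mathcal L_rw(r,u)\,dr$ comes from integrating $\partial_rw(r,u)=-\mathcal L_rw(r,u)$ in $r$ at fixed $u$. It does not come from It\^o's formula along the flow, which only reproduces $w(t,u)=\E[w(s,U(t,u;s))]$.
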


\begin{proof}
      By Proposition \ref{lem:flow_regularity_reduced}, the reduced SDE \eqref{eq:reducedsde} generates a stochastic flow $U(t,u;s)$, $0 \leq t \leq s \leq T$, which is four times continuously differentiable with respect to the initial value $u$. Moreover, for every $q \geq 2$, there exists a constant $C > 0$ such that for any $t \in [0,T], u \in \R^{d}$,
      \begin{align}\label{eq:estUDuj}
            \E\big[|U(t,u;T)|^{q}\big] \leq C(1+|u|^{q}),
            \quad
            \E\big[|D_u^jU(t,u;T)|^{q}\big] \leq C, \quad j = 1,\cdots,4.
      \end{align}

      We first prove the spatial derivative estimate. Since $\psi \in C_{\mathrm{pol}}^{4}(\R^{d};\R)$, there exist constants $C > 0$ and $\chi \geq 0$ such that $|D^j\psi(z)| \leq C(1+|z|^\chi)$ for all $z \in \R^{d}$ and $j = 0,1,2,3,4$. It follows from \eqref{eq:estUDuj} that for the case $j = 0$,
      \begin{align*}
            |D_u^0 w(t,u)| = |w(t,u)|
            \leq
            \E\big[|\psi(U(t,u;T))|\big]
            \leq
            C\big(1 + \E\big[|U(t,u;T)|^{\chi}\big]\big)
            \leq
            C\big(1 + |u|^{\chi}\big).
      \end{align*}
      To estimate $D_u^j w(t,u)$ for $j = 1,2,3,4$, we need to justify the interchange of differentiation and expectation. For every direction $\xi \in \R^{d}$, the difference quotient of $w$ satisfies
      \begin{equation*}
            \frac{w(t,u+\varepsilon \xi)-w(t,u)}{\varepsilon}
            =
            \E\left[\frac{\psi(U(t,u+\varepsilon \xi;T))
            - \psi(U(t,u;T))}{\varepsilon}\right].
      \end{equation*}
      Owing to the differentiability of $U(t,u;T)$ with respect to $u$ in Proposition \ref{lem:flow_regularity_reduced}, we have 
      \begin{equation*}
            \left.\frac{d}{d\varepsilon}\psi\bigl(U(t,u+\varepsilon\xi;T)\bigr)\right|_{\varepsilon=0}
            =
            D\psi\bigl(U(t,u;T)\bigr)\bigl[D_uU(t,u;T)[\xi]\bigr], 
            \quad \text{$\P$-a.s.}
      \end{equation*}
      Equivalently,
      \begin{equation*}
            Q_{\varepsilon}
            :=
            \frac{\psi\bigl(U(t,u+\varepsilon\xi;T)\bigr)
            - \psi\bigl(U(t,u;T)\bigr)}{\varepsilon}
            \longrightarrow
            D\psi\bigl(U(t,u;T)\bigr)\bigl[D_uU(t,u;T)[\xi]\bigr]
      \end{equation*}
      almost surely as $\varepsilon \to 0$. We shall show that, for $\varepsilon_0>0$ sufficiently small, the family $
      \{Q_\varepsilon: 0< |\varepsilon| \le \varepsilon_0\}$ is uniformly integrable. Choose a compact neighbourhood $K$ of $u$, for instance $K:=\overline{B(u,1)}$. Taking $\varepsilon_0 > 0$ sufficiently small (e.g., $\varepsilon_0 \leq |\xi|^{-1}$), we may assume that
      \begin{equation*}
            u+\rho\varepsilon\xi\in K,
            \quad 0<|\varepsilon|\le\varepsilon_0, \quad \rho\in[0,1].
      \end{equation*}
      By Proposition \ref{lem:flow_regularity_reduced}, the map $v \mapsto U(t,v;T)$ is differentiable with respect to the initial value. Therefore, for fixed $\omega$, applying the chain rule along the line segment $v_\rho := u + \rho\varepsilon\xi$ gives
      \begin{align*}
            |Q_{\varepsilon}|
            =&~
            \left|\int_0^1 D\psi(U(t,u+\rho\varepsilon\xi;T))
            \big[D_uU(t,u+\rho\varepsilon\xi;T)[\xi]\big]\,d\rho\right|
            \\\leq&~
            |\xi| \int_0^1 |D\psi(U(t,u+\rho\varepsilon\xi;T))|
            |D_uU(t,u+\rho\varepsilon\xi;T)|\,d\rho
            \\\leq&~
            C|\xi| \int_0^1 \bigl(1+|U(t,u+\rho\varepsilon\xi;T)|^\chi\bigr)
            |D_uU(t,u+\rho\varepsilon\xi;T)|\,d\rho.
      \end{align*}
      Now choose some $\gamma > 1$, for example $\gamma = 2$. By the H\"{o}lder inequality, one has
      \begin{align*}
            \E\big[|Q_\varepsilon|^{\gamma}\big]
            \leq&~
            C|\xi|^\gamma \int_0^1 \E\left[
            \bigl(1+|U(t,u+\rho\varepsilon\xi;T)|^\chi\bigr)^\gamma
            |D_uU(t,u+\rho\varepsilon\xi;T)|^\gamma \right] \,d\rho
            \\\leq&~
            C|\xi|^\gamma \int_0^1 \big(\E\big[
            \bigl(1+|U(t,u+\rho\varepsilon\xi;T)|^\chi\bigr)^{2\gamma} 
            \big]\big)^{\frac{1}{2}}
            \big(\E\left[|D_uU(t,u+\rho\varepsilon\xi;T)|^{2\gamma} 
            \right]\big)^{\frac{1}{2}} \,d\rho.
      \end{align*}
      Since $K$ is compact, \eqref{eq:moment_exact_solution_all_q} and \eqref{eq:ass9-derivative-weighted} indicate that
      \begin{align*}
            \sup_{v\in K}\E\big[|U(t,v;T)|^{2\gamma\chi}\big]        
            \leq
            \sup_{v\in K}C(1+|v|^{2\gamma\chi}) 
            < +\infty,
            \quad
            \sup_{v \in K}\E\big[|D_uU(t,v;T)|^{2\gamma}\big] < +\infty.
      \end{align*}
      Together with $u + \rho\varepsilon\xi \in K$ for all $0 < |\varepsilon| \leq \varepsilon_{0}$ and $\rho \in [0,1]$, we obtain
      \begin{align*}
            \sup_{0 < |\varepsilon| \leq \varepsilon_{0}} 
            \E\big[|Q_\varepsilon|^{\gamma}\big]
            \leq
            C_{K}|\xi|^{\gamma}
            <
            +\infty,
      \end{align*}
      which together with
      $
            \E\big[|Q_\varepsilon|\mathbf{1}_{\{|Q_\varepsilon|>R\}}\big]
            \leq
            \E\big[|Q_\varepsilon|^\gamma\big] R^{1-\gamma}  
      $ 
      for any $R > 0$ implies
      \begin{align*}
            \lim_{R \to \infty} \sup_{0 < |\varepsilon| \leq \varepsilon_{0}} 
            \mathbb{E} \left[|Q_\varepsilon|\mathbf{1}_{\{|Q_\varepsilon|>R\}}\right]
            =
            0.
      \end{align*}
      Thus the family $\{Q_\varepsilon: 0< |\varepsilon| \le \varepsilon_0\}$ is uniformly integrable. It follows from the Vitali convergence theorem that
      \begin{equation*}
            D_uw(t,u)[\xi]
            =
            \E\left[D\psi(U(t,u;T))D_uU(t,u;T)[\xi] \right].
      \end{equation*}
      The same arguments apply to the higher-order directional derivatives. Indeed, for $j = 2,3,4$, the multivariate Fa\`a di Bruno formula expresses $D_u^j(\psi(U(t,u;T)))$ as a finite sum of terms of the form
      \begin{equation*}
            D^k\psi(U(t,u;T))\Big[D_u^{\ell_1}U(t,u;T),
            \cdots, D_u^{\ell_k}U(t,u;T)\Big],
            \quad
            \ell_1 + \cdots + \ell_k = j.
      \end{equation*}
      Each such term is integrable and locally uniformly integrable by the H\"older inequality, the polynomial growth of $D^k\psi$, and the moment bounds for the flow derivatives. Therefore, $D_u^j$ and $\E$ can be interchanged for $j = 1,\cdots,4$.
      %
      %
      Differentiating $w(t,u)=\E[\psi(U(t,u;T))]$ with respect to $u$ and using the multivariate Fa\`a di Bruno formula, we obtain
      \begin{align*}
            D_u^j w(t,u)[\xi_1,\cdots,\xi_j]
            =
            \E\bigg[\sum_{\pi\in\Pi_j}D^{|\pi|}\psi(U)
            \Big[J^{(|B_1|)}(t,u;T)[\xi_{B_1}],
            \cdots,
            J^{(|B_{|\pi|}|)}(t,u;T)[\xi_{B_{|\pi|}}]\Big]\bigg],
      \end{align*}
      where $\Pi_j$ denotes the set of all partitions $\pi = \{B_1,\cdots,B_{|\pi|}\}$ of $\{1,\cdots,j\}$. For a block $B = \{i_1,\cdots,i_\ell\}$, we use the notation $J^{(|B|)}(t,u;T)[\xi_B] := D_u^{\ell}U(t,u;T)[\xi_{i_1},\cdots,\xi_{i_\ell}]$. Since the number of partitions of $\{1,\cdots,j\}$ is finite, using the polynomial growth of $D^k\psi$, the H\"older inequality, the moment estimate of $U(t,u;T)$ in \eqref{eq:estUDuj}, and the derivative moment bounds for $J^{(\ell)}$, we get
      \begin{align*}
            |D_u^j w(t,u)[\xi_1,\cdots,\xi_j]| \leq C(1+|u|^q).
      \end{align*}
      Taking the supremum over $|\xi_1|,\cdots,|\xi_j| \leq 1$ yields $|D_u^j w(t,u)| \leq C(1+|u|^q)$ for $j = 1,2,3,4$. Together with the case $j = 0$, this proves \eqref{eq:w_spatial_growth}.

      We next prove the time-derivative estimates. Owing to \eqref{eq:backward_kolmogorov_w},  \eqref{eq:operatorLt}, \eqref{eq:w_spatial_growth} and the linear growth of $f$ and $g$ in Lemma \ref{lem:_coeff}, we have
      \begin{align}\label{eq:partialtD0wtu}
            |\partial_t w(t,u)|
            \leq
            |f(t,u)|\,|D_u w(t,u)| + \frac12 |g(t,u)|^2\,|D_u^2 w(t,u)|
            \leq
            C(1+|u|^q).
      \end{align}
      It remains to estimate $\partial_tD_uw$ and $\partial_tD_u^2w$. Since $w$ is a classical solution of the backward Kolmogorov equation \eqref{eq:backward_kolmogorov_w}, differentiating $\partial_t w(t,u) = -\mathcal L_t w(t,u)$ with respect to $u$ gives
      \begin{align}\label{eq:partialtDuj}
            \partial_tD_u^j w(t,u)
            =
            -D_u^j(\mathcal L_t w)(t,u), \quad j = 1,2.
      \end{align}
      To compute $D_u^j(\mathcal L_t w)$ explicitly, writting $g^k(t,u)$ for the $k$-th column of $g(t,u), k = 1,\cdots,m$ yields
      \begin{align*}
            \mathcal L_t w(t,u)
            =
            D_uw(t,u)[f(t,u)] + \frac12\sum_{k=1}^{m}D_u^2w(t,u)[g^k(t,u),g^k(t,u)].
      \end{align*}
      For any $\xi,\eta \in \R^{d}$, the first and the second spatial derivatives are given by
      \begin{align*}
            D_u(\mathcal L_t w)(t,u)[\xi]
            =&~
            D_u^2w(t,u)[\xi,f(t,u)] + D_uw(t,u)[D_uf(t,u)[\xi]]
            \\&~+
            \frac12\sum_{k=1}^m D_u^3w(t,u)[\xi,g^k(t,u),g^k(t,u)]
            +
            \sum_{k=1}^m D_u^2w(t,u)[D_ug^k(t,u)[\xi],g^k(t,u)],
      \end{align*}
      and
      \begin{align*}
            &~D_u^2(\mathcal L_t w)(t,u)[\xi,\eta]
            \\=&~
            D_u^3w(t,u)[\xi,\eta,f(t,u)] + D_u^2w(t,u)[\xi,D_uf(t,u)[\eta]]
            +
            D_u^2w(t,u)[\eta,D_uf(t,u)[\xi]]
            \\&~+
            D_uw(t,u)[D_u^2f(t,u)[\xi,\eta]]
            +
            \frac12\sum_{k=1}^{m} D_{u}^{4}w(t,u)[\xi,\eta,g^k(t,u),g^k(t,u)]
            \\&~+
            \sum_{k=1}^m D_u^3w(t,u)[\xi,D_ug^k(t,u)[\eta],g^k(t,u)]
            +
            \sum_{k=1}^m D_u^3w(t,u)[\eta,D_ug^k(t,u)[\xi],g^k(t,u)]
            \\&~+
            \sum_{k=1}^m D_u^2w(t,u)[D_u^2g^k(t,u)[\xi,\eta],g^k(t,u)]
            +
            \sum_{k=1}^m D_u^2w(t,u)[D_ug^k(t,u)[\xi],D_ug^k(t,u)[\eta]].
      \end{align*}
      Thanks to Lemma \ref{lem:_coeff}, there exists a constant $C > 0$ such that for any $\xi,\eta \in \R^{d}$,
      \begin{equation*}
            \left|D_u(\mathcal L_t w)(t,u)[\xi]\right| \leq C(1+|u|^q)|\xi|,
            \quad
            \left|D_u^2(\mathcal L_t w)(t,u)[\xi,\eta]\right| \leq C(1+|u|^q)|\xi|\,|\eta|.
      \end{equation*}
      Taking the supremum over $|\xi|,|\eta| \leq 1$ and using \eqref{eq:partialtDuj} yield
      \begin{equation*}
            |\partial_t D_u w(t,u)| + |\partial_t D_u^2 w(t,u)|
            =
            |D_u(\mathcal L_t w)(t,u)| + |D_u^2(\mathcal L_t w)(t,u)|
            \leq
            C(1+|u|^q),
      \end{equation*}
      which together with \eqref{eq:partialtD0wtu} proves \eqref{eq:w_time_derivative_growth}.

      Finally, for $j=0,1,2$ and $s,t\in[0,T]$, the fundamental theorem of calculus gives
      \begin{equation*}
            D_u^j w(t,u)-D_u^j w(s,u)
            =
            \int_s^t \partial_rD_u^j w(r,u)\,dr.
      \end{equation*}
      It follows from \eqref{eq:w_time_derivative_growth} that
      \begin{equation*}
            |D_u^j w(t,u)-D_u^j w(s,u)|
            \leq
            C(1+|u|^q)|t-s|, \quad j=0,1,2,
      \end{equation*}
      which yields \eqref{eq:w_time_lipschitz}. The proof is complete.
\end{proof}

\subsubsection{One-step weak local error}
We now verify the one-step weak local error condition in Theorem \ref{thm:weak_convergence_sdae}. Inspired by the frozen reference step argument in \cite{chen2026weak}, we insert a frozen Euler reference step between the exact reduced flow and the differential update induced by the SDAE-level stochastic theta method. This decomposes the local weak error into a standard Brownian weak expansion part and an implicit correction part caused by the drift evaluation at the new time level. Let us first derive the standard one-step weak expansion for the following frozen Euler step
\begin{align*}
      \bar{u}(t,u;t+h)
      :=
      u + hf(t,u) + g(t,u)\Delta{W_{t,h}}, 
      \quad \Delta{W_{t,h}} := W_{t+h}-W_{t},
      \quad t \in [0,T-h], u \in \R^{d}.
\end{align*}

\begin{lemma}\label{lem:frozen_step_taylor}
      Suppose that Assumptions \ref{ass:At-structure}, \ref{ass:sdae-structure}, \ref{ass:coeff}, \ref{ass:FG-in-H} and \ref{ass:time_reg_merged} hold, and let $h \in (0,h_{0}]$. Then there exist constants $C > 0$ and $q \geq 0$ such that for any $t \in [0,T-h]$ and $u \in \R^{d}$,
      \begin{equation}\label{eq:frozen_step_taylor}
            \left|\E\bigl[w(t+h,\bar u(t,u;t+h))\bigr] - w(t+h,u) - h\mathcal L_t w(t+h,u)\right|
            \leq
            C(1+|u|^q)h^2.
      \end{equation}
\end{lemma}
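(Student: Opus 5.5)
We fix $t\in[0,T-h]$ and $u\in\R^{d}$, and write $\Delta:=\bar u(t,u;t+h)-u=hf(t,u)+g(t,u)\Delta W_{t,h}$. Since the time argument $t+h$ of $w$ is frozen, we only need a spatial Taylor expansion of $w(t+h,\cdot)$ around $u$, up to third order with an integral remainder of fourth order:
\begin{align*}
      w(t+h,u+\Delta)
      =&~
      w(t+h,u) + D_uw(t+h,u)[\Delta]
      + \tfrac12 D_u^2w(t+h,u)[\Delta,\Delta]
      + \tfrac16 D_u^3w(t+h,u)[\Delta,\Delta,\Delta]
      \\&~
      + \tfrac16\int_0^1(1-\rho)^3 D_u^4w(t+h,u+\rho\Delta)[\Delta,\Delta,\Delta,\Delta]\,d\rho .
\end{align*}
Lemma \ref{lem:w_regularity} gives $w(t+h,\cdot)\in C^4$ with derivatives of polynomial growth uniformly in time, so this expansion is legitimate.

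Next we take expectations term by term, using that $\Delta W_{t,h}\sim\mathcal N(0,hI_m)$ is independent of the deterministic quantities $f(t,u)$, $g(t,u)$ and $D_u^jw(t+h,u)$. The first-order term gives exactly $hD_uw(t+h,u)[f(t,u)]$. For the second-order term we expand
\begin{align*}
      \E\big[D_u^2w[\Delta,\Delta]\big]
      =
      h^2D_u^2w[f,f] + \sum_{k=1}^m hD_u^2w[g^k,g^k],
\end{align*}
since the cross term vanishes because $\E[\Delta W_{t,h}]=0$. The $\sum_k$ part, multiplied by $\tfrac12$, together with the first-order term reproduces $h\mathcal L_t w(t+h,u)$ as defined in \eqref{eq:operatorLt}, with the coefficients frozen at time $t$. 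The leftover $\tfrac12h^2D_u^2w[f,f]$ is bounded by $C(1+|u|^q)h^2$, using \eqref{eq:w_spatial_growth} and the linear growth \eqref{eq:linear_growth_fg}. For the third-order term, odd Gaussian moments vanish, so
\begin{align*}
      \E\big[D_u^3w[\Delta,\Delta,\Delta]\big]
      =
      h^3D_u^3w[f,f,f] + 3h\,\E\big[D_u^3w[f,g\Delta W_{t,h},g\Delta W_{t,h}]\big],
\end{align*}
and both pieces are $O(h^3)$ and $O(h^2)$ respectively, each with a polynomial weight in $u$.

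The remainder is the only term requiring care, because $D_u^4w$ is evaluated at the random point $u+\rho\Delta$. Using \eqref{eq:w_spatial_growth}, we bound it by $C\,\E\big[(1+|u|^q+|\Delta|^q)|\Delta|^4\big]$. Since $|\Delta|\le C(1+|u|)(h+|\Delta W_{t,h}|)$ and $\E\big[|\Delta W_{t,h}|^r\big]\le C_rh^{r/2}$, Cauchy--Schwarz yields $\E\big[|\Delta|^4(1+|\Delta|^q)\big]\le C(1+|u|^{q+4})h^2$, where we use $h\le h_0\le1$ so that higher powers of $h$ are absorbed. This is the only real obstacle, and it is handled purely through Gaussian moments.

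Collecting the terms, everything other than $w(t+h,u)+h\mathcal L_tw(t+h,u)$ is bounded by $C(1+|u|^{q'})h^2$ with $q'$ independent of $t$, $u$ and $h$. This gives \eqref{eq:frozen_step_taylor} after renaming $q$. Note that no time-regularity of $w$ is needed here, because both sides evaluate $w$ at the same time $t+h$.
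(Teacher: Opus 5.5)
Your proposal is correct and follows the paper's proof essentially step by step. You use the same third-order spatial Taylor expansion of $w(t+h,\cdot)$ with fourth-order integral remainder and the same Gaussian moment identities to isolate $h\mathcal L_t w(t+h,u)$, and you bound the leftover terms and the remainder by $C(1+|u|^q)h^2$ using the polynomial growth of $D_u^j w$ and the moments of $hf(t,u)+g(t,u)\Delta W_{t,h}$.
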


\begin{proof}
      Since $\Delta{W_{t,h}}$ is independent of $\mathcal F_t$ and satisfies
      \begin{align*}
            \E[\Delta{W_{t,h}}]=0,
            \quad
            \E[\Delta{W_{t,h}}^{k}\Delta{W_{t,h}}^{\ell}] = h\delta_{k\ell},
            \quad
            \E\big[|\Delta{W_{t,h}}|^{q}\big] \leq C_{r} h^{\frac{q}{2}},
            \quad r \geq 2,
      \end{align*}
      we use the linear growth of $f$ and $g$ as well as $h \leq h_{0} < 1$ to show that for every $q \geq 2$,
      \begin{align}\label{eq:eta-moment-frozen}
            \E\big[|hf(t,u) + g(t,u)\Delta{W_{t,h}}|^{q}\big]
            \leq
            Ch^{q}|f(t,u)|^{q} + C|g(t,u)|^{q}\E\big[|\Delta{W_{t,h}}|^{q}\big]
            \leq
            C(1+|u|^{q})h^{\frac{q}{2}}.
      \end{align}
      For simplicity, we write $\eta := hf(t,u) + g(t,u)\Delta{W_{t,h}}$ and $\phi(x) := w(t+h,x), x \in \R^{d}$.  Applying $\bar{u}(t,u;t+h) = u+\eta$ and the Taylor formula with integral remainder leads to
      \begin{align*}
            \phi(\bar{u}(t,u;t+h))
            =
            \phi(u) + D\phi(u)[\eta]
            +
            \frac12D^2\phi(u)[\eta,\eta]
            +
            \frac16D^3\phi(u)[\eta,\eta,\eta] + R_4,
      \end{align*}
      where $R_{4} := \frac{1}{6}\int_{0}^{1}(1-\rho)^{3} D^{4}\phi(u+\rho\eta)[\eta,\eta,\eta,\eta]\,d\rho$.
      First, $\E[D\phi(u)[\eta]] = hD\phi(u)[f(t,u)]$. Second,
      \begin{align*}
            \E\big[D^2\phi(u)[\eta,\eta]\big]
            =
            h\sum_{k=1}^mD^2\phi(u)[g^{k}(t,u),g^{k}(t,u)]
            +
            h^2D^2\phi(u)[f(t,u),f(t,u)]
      \end{align*}
      with $\left|h^2D^2\phi(u)[a,a]\right| \leq C(1+|u|^q)h^2$ because of the growth estimate of $D^2w$ in Lemma \ref{lem:w_regularity} and the linear growth of $f$. For the third-order term, using the vanishing of odd moments of the centered Gaussian increment, we have
      \begin{align*}
            \E\big[D^3\phi(u)[\eta,\eta,\eta]\big]
            =
            h^3D^3\phi(u)[f(t,u),f(t,u),f(t,u)]
            +
            3h^2\sum_{k=1}^mD^3\phi(u)[f(t,u),g^{k}(t,u),g^{k}(t,u)]
      \end{align*}
      with $\big|\E\big[D^{3}\phi(u)[\eta,\eta,\eta]\big]\big| \leq C(1+|u|^q)h^{2}$, which is due to the growth estimate of $D^3w$ and the linear growth of $f$ and $g$. For the remainder, Lemma \ref{lem:w_regularity} gives $|D^4\phi(x)| = |D_u^4w(t+h,x)| \leq C(1+|x|^q)$, which together with \eqref{eq:eta-moment-frozen} yields
      \begin{align*}
            \E\big[|R_4|\big]
            \leq
            C\E\left[(1+|u+\rho\eta|^q)|\eta|^4\right]  
            \leq
            C(1+|u|^q)\E\big[|\eta|^4\big] + C\E\big[|\eta|^{q+4}\big]
            \leq
            C(1+|u|^q)h^{2}.
      \end{align*}
      Combining the preceding estimates, we obtain
      \begin{align*}
            \E\big[\phi(\bar{u}(t,u;t+h))\big]
            &=
            \phi(u) + hD\phi(u)[f(t,u)]
            +
            \frac h2\sum_{k=1}^mD^2\phi(u)[g^{k}(t,u),g^{k}(t,u)] + \mathcal R_h
      \end{align*}
      with $|\mathcal{R}_{h}| \leq C(1 + |u|^{q})h^{2}$.
      It follows from
      \begin{align*}
            D\phi(u)[f(t,u)] + \frac12\sum_{k=1}^mD^2\phi(u)[g^{k}(t,u),g^{k}(t,u)]
            =
            \mathcal L_t w(t+h,u)
      \end{align*}
      that
      $\E\bigl[w(t+h,\bar u(t,u;t+h))\bigr] = w(t+h,u) + h\mathcal L_t w(t+h,u) + \mathcal R_h$, which proves \eqref{eq:frozen_step_taylor}.
\end{proof}

Combining the preceding one-step expansion and the backward Kolmogorov equation \eqref{eq:backward_kolmogorov_w}, we obtain the weak error estimate between the exact reduced flow and the frozen Euler step.

\begin{proposition}\label{prop:frozen_step_weak_error}
      Suppose that Assumptions \ref{ass:At-structure}, \ref{ass:sdae-structure}, \ref{ass:coeff}, \ref{ass:FG-in-H} and \ref{ass:time_reg_merged} hold, and let $h \in (0,h_{0}]$. Then there exist constants $C > 0$ and $q \geq 0$, independent of $t,h$, and $u$, such that
      \begin{equation*}
            \left|\E\bigl[w(t+h,U(t,u;t+h))\bigr]
            - \E\bigl[w(t+h,\bar u(t,u;t+h))\bigr]\right|
            \leq
            C(1+|u|^q)h^2.
      \end{equation*}
\end{proposition}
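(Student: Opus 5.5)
The plan is to compare both expectations with the common quantity $w(t+h,u) + h\mathcal L_t w(t+h,u)$. Lemma \ref{lem:frozen_step_taylor} already gives
\[
\bigl|\E[w(t+h,\bar u(t,u;t+h))] - w(t+h,u) - h\mathcal L_t w(t+h,u)\bigr| \leq C(1+|u|^q)h^2 ,
\]
so it remains to prove the same type of bound for the exact reduced flow.

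For the exact flow I will use the semigroup identity \eqref{eq:w_semigroup_B}. It gives $\E[w(t+h,U(t,u;t+h))] = w(t,u)$, and therefore
\[
\E[w(t+h,U(t,u;t+h))] - w(t+h,u) = -\int_t^{t+h}\partial_r w(r,u)\,dr = \int_t^{t+h}\mathcal L_r w(r,u)\,dr ,
\]
where the last equality uses the backward Kolmogorov equation \eqref{eq:backward_kolmogorov_w}. The task then reduces to showing
\[
\bigl|\mathcal L_r w(r,u) - \mathcal L_t w(t+h,u)\bigr| \leq C(1+|u|^q)h \qquad \text{for } r\in[t,t+h].
\]
Integrating this over an interval of length $h$ yields the $h^2$ bound.

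To prove the reduced bound, I write $\mathcal L_r w(r,u) = D_uw(r,u)[f(r,u)] + \frac12\sum_k D_u^2w(r,u)[g^k(r,u),g^k(r,u)]$ and split the difference into two parts. The first part is the change of the coefficients $f,g$ from time $t$ to time $r$. This is controlled by the time-derivative bound \eqref{eq:f-g-time-derivative-growth}, which gives $|f(r,u)-f(t,u)|+|g(r,u)-g(t,u)| \leq C(1+|u|^l)h$. These differences are paired with $D_uw$ and $D_u^2w$, which have polynomial growth by \eqref{eq:w_spatial_growth}; the quadratic term in $g$ also uses the linear growth \eqref{eq:linear_growth_fg}. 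The second part is the change of $D_uw$ and $D_u^2w$ from time $r$ to time $t+h$. Since $|r-(t+h)|\leq h$, this is controlled by \eqref{eq:w_time_lipschitz}, again paired with the linear growth of $f,g$. All constants are polynomial in $|u|$ and uniform in $t$ and $h$, so combining this with Lemma \ref{lem:frozen_step_taylor} through the triangle inequality finishes the proof.

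The step that needs the most care is justifying $w(t,u) - w(t+h,u) = \int_t^{t+h}\mathcal L_r w(r,u)\,dr$. This requires $w$ to be $C^1$ in time with $\partial_t w = -\mathcal L_t w$ holding classically up to $t+h\le T$; this is exactly what the cited classical-solution property and \eqref{eq:w_time_derivative_growth} provide. The rest is bookkeeping of polynomial growth exponents, which can all be absorbed into a single $q$.
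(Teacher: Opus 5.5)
Your proposal is correct and follows essentially the same route as the paper's proof. Both arguments proceed in four steps:
\begin{itemize}
\item the semigroup identity $\E[w(t+h,U(t,u;t+h))]=w(t,u)$;
\item the backward Kolmogorov equation, used to write the defect $w(t+h,u)+h\mathcal L_t w(t+h,u)-w(t,u)$ as an integral over $[t,t+h]$;
\item the same two-part split of the integrand, with the coefficient change bounded via \eqref{eq:f-g-time-derivative-growth} and the change of $D_uw,D_u^2w$ bounded via \eqref{eq:w_time_lipschitz};
\item the triangle inequality combined with Lemma \ref{lem:frozen_step_taylor}.
\end{itemize}
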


\begin{proof}
      By the semigroup property of the Kolmogorov function \eqref{eq:w_semigroup_B} with $s = t+h$, we have
      \begin{equation}\label{eq:exact-semigroup-local}
            \E\bigl[w(t+h,U(t,u;t+h))\bigr] = w(t,u).
      \end{equation}
      Thus it remains to compare $w(t,u)$ with the first-order frozen expansion at time $t+h$. From  \eqref{eq:backward_kolmogorov_w},
      we get $w(t+h,u) - w(t,u) = -\int_t^{t+h}\mathcal L_s w(s,u)\,ds$,
      which implies
      \begin{align}\label{eq:kolmogorov-defect}
            &~w(t+h,u)+h\mathcal L_t w(t+h,u)-w(t,u) \notag
            =
            \int_t^{t+h} \big(\mathcal L_t w(t+h,u)-\mathcal L_s w(s,u)\big)\,ds \notag
            \\=&~
            \int_t^{t+h} \big(\mathcal L_t w(t+h,u)-\mathcal L_s w(t+h,u)\big)
            +
            \big(\mathcal L_s w(t+h,u)-\mathcal L_s w(s,u)\big)\,ds.
      \end{align}
      By the fundamental theorem of calculus and the time-regularity estimates for $f$ and $g$ in \eqref{eq:f-g-time-derivative-growth}, after possibly increasing $q$, we have
      \begin{equation}\label{eq:fg-time-difference-for-frozen}
            |f(s,u)-f(t,u)| + |g(s,u)-g(t,u)|
            \leq
            C(1+|u|^q)|s-t|.
      \end{equation}
      which together with the linear growth of $g$ gives 
      \begin{equation*}
           |g(s,u)g(s,u)^\top-g(t,u)g(t,u)^\top| 
           \leq
           |g(s,u)-g(t,u)|\bigl(|g(s,u)|+|g(t,u)|\bigr) 
           \leq
           C(1+|u|^q)|s-t|.
      \end{equation*}
      It follows from the definition of $\mathcal L_t$, \eqref{eq:w_spatial_growth} and \eqref{eq:fg-time-difference-for-frozen} that
      \begin{align}\label{eq:L-coefficient-difference}
            \big|\mathcal L_t w(t+h,u)-\mathcal L_s w(t+h,u)\big| \notag
            =&~
            \Big|\big\langle f(t,u)-f(s,u),D_uw(t+h,u)\big\rangle \notag
            \\&~+
            \frac12\operatorname{Tr}\!\big(\big(g(t,u)g(t,u)^{\top}
            - g(s,u)g(s,u)^{\top}\big)D_u^2w(t+h,u)\big)\Big| \notag
            \\\leq&~
            \frac12 |g(t,u)g(t,u)^\top-g(s,u)g(s,u)^\top|\,|D_u^2w(t+h,u)| \notag
            \\&+
            |f(t,u)-f(s,u)|\,|D_uw(t+h,u)| \notag
            \\\leq&~
            C(1+|u|^q)|s-t|.
      \end{align}
      Applying the definition of $\mathcal L_s$, the linear growth of $f$ and $g$ and \eqref{eq:w_time_lipschitz} leads to
      \begin{align}\label{eq:L-w-time-difference}
            \big|\mathcal L_s w(t+h,u) - \mathcal L_s w(s,u) \big| \notag 
            =&~
            \Big|\left\langle f(s,u), D_u w(t+h,u)-D_u w(s,u) \right\rangle \notag
            \\&~
            + \frac12 \operatorname{Tr}\!\left(g(s,u)g(s,u)^\top
            \bigl(D_u^2w(t+h,u)-D_u^2w(s,u)\bigr)\right)\Big| \notag
            \\\leq&~
            |f(s,u)|\,|D_u w(t+h,u)-D_u w(s,u)| \notag
            \\&~
            + \frac12 |g(s,u)|^2 \,|D_u^2w(t+h,u)-D_u^2w(s,u)| \notag
            \\\leq&~
            C(1+|u|^q)|t+h-s|.
      \end{align}
      Combining \eqref{eq:kolmogorov-defect}, \eqref{eq:L-coefficient-difference}, and \eqref{eq:L-w-time-difference}, one gets
      \begin{equation}\label{eq:kolmogorov-defect-bound}
            \left|w(t+h,u) + h\mathcal L_t w(t+h,u)-w(t,u)\right| 
            \leq
            C(1+|u|^q)h^2.
      \end{equation}
      Due to \eqref{eq:exact-semigroup-local}, \eqref{eq:kolmogorov-defect-bound} and Lemma \ref{lem:frozen_step_taylor}, we finally obtain
      \begin{align*}
            &~\left|\E\big[w(t+h,U(t,u;t+h))\big]
            - \E\big[w(t+h,\bar u(t,u;t+h))\big]\right|  
            \\=&~
            \left|w(t,u) - \E\bigl[w(t+h,\bar u(t,u;t+h))\bigr]\right|  
            \\\leq&~
            \left|w(t,u)-w(t+h,u)-h\mathcal L_t w(t+h,u)\right|  
            \\&~
            +\left|\E\bigl[w(t+h,\bar u(t,u;t+h))\bigr]
            - w(t+h,u) - h\mathcal L_t w(t+h,u)\right|
            \\\leq&~
            C(1+|u|^q)h^2,
      \end{align*}
      which proves the required and thus complete the proof.
\end{proof}

It remains to estimate the weak contribution of the implicit correction. 
For any $h \in (0, h_{0}]$, set
\begin{align*}
      Y_{t} := u + \widehat{V}(t,u),
      \quad
      \Delta{W_{t,h}} := W_{t+h} - W_{t},
      \quad t \in [0,T-h], u \in \R^{d}.          
\end{align*}
Let $\widetilde{u} = \widetilde{u}(t,u;t+h)$ be the differential component generated by the local stochastic theta step
\begin{align*}
      \widetilde{u}
      =
      u + h(1-\theta)f(t,u)
      + 
      h\theta A_t^{-}F(t+h,\widetilde Y_{t+h}) + g(t,u)\Delta W_{t,h},
\end{align*}
where $\widetilde{Y}_{t+h} := \widetilde{u} + \widehat{V}(t+h,\widetilde{u})$. The following proposition estimates the weak difference between this local differential update and the frozen Euler reference step.

\begin{proposition}\label{lem:implicit_correction_weak}
      Suppose that Assumptions \ref{ass:At-structure}, \ref{ass:sdae-structure}, \ref{ass:coeff}, \ref{ass:FG-in-H} and \ref{ass:time_reg_merged} hold, and let $\theta \in (0,1]$, $h \in (0,h_{0}]$. 
      Then there exist constants $C > 0$ and $q \geq 0$, independent of $h,t,u$, such that
      \begin{align*}
            \left| \E\big[w(t+h,\widetilde{u}(t,u;t+h))\big]
            - \E\big[w(t+h,\bar{u}(t,u;t+h))\big] \right|
            \leq
            C(1+|u|^q)h^2.
      \end{align*}
\end{proposition}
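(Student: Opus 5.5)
We write $\widetilde u = \bar u + \delta$, where $\bar u = \bar u(t,u;t+h)$ is the frozen Euler step. The implicit correction is
\begin{equation*}
      \delta := h\theta\bigl(A_t^{-}F(t+h,\widetilde Y_{t+h}) - f(t,u)\bigr)
      = h\theta\bigl(A_t^{-}F(t+h,\widetilde Y_{t+h}) - A_t^{-}F(t,Y_t)\bigr),
\end{equation*}
using $f(t,u) = A_t^{-}F(t,Y_t)$ from \eqref{eq:coff}. Set $\phi(x) := w(t+h,x)$. A second-order Taylor expansion around $\bar u$ gives
\begin{equation*}
      \phi(\widetilde u)-\phi(\bar u) = D\phi(\bar u)[\delta] + \int_0^1(1-\rho)D^2\phi(\bar u+\rho\delta)[\delta,\delta]\,d\rho .
\end{equation*}
The plan is to show that the second-order remainder has expectation $O(h^2)$. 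This requires $\E|\delta|^{2r}\le C(1+|u|^{q})h^{2r}\cdot h^{r}$, or at least $\E|\delta|^2\le Ch^3$ times a polynomial. It is then enough to show that $\E\bigl[D\phi(\bar u)[\delta]\bigr]$ is $O(h^2)$.

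The first step is pathwise bounds on $\widetilde u-u$ and $\delta$. Exactly as in the proof of Proposition \ref{lem:moment_Y}, the fixed-point equation with $h\le h_0$ gives $|\widetilde u-u|\le C(1+|u|)(h+|\Delta W_{t,h}|)$. Then by Assumption \ref{ass:coeff}, Lemma \ref{lem:Vhat_basic} (Lipschitz in $u$ and in $t$) and Assumption \ref{ass:time_reg_merged}, we have $|F(t+h,\widetilde Y_{t+h})-F(t,Y_t)|\le C(1+|u|^{l})h + C|\widetilde u-u|$. Hence $|\delta|\le Ch(1+|u|^{l})(h+|\Delta W_{t,h}|)$, and so $\E|\delta|^r\le C(1+|u|^{q})h^{3r/2}$. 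Combined with the polynomial growth of $D^2 w$ from Lemma \ref{lem:w_regularity}, the moment bound \eqref{eq:eta-moment-frozen} for $\bar u-u$, and H\"older's inequality, the remainder term contributes $C(1+|u|^q)h^3$.

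The main step is the linear term. We split $D\phi(\bar u)[\delta] = D\phi(u)[\delta] + \bigl(D\phi(\bar u)-D\phi(u)\bigr)[\delta]$. The second piece is bounded by $\sup|D^2\phi|\,|\bar u-u|\,|\delta|$ along the segment. Since $|\bar u-u|$ is of size $h^{1/2}$ and $|\delta|$ is of size $h^{3/2}$ in $L^r$, this piece is $O(h^2)$ after H\"older, again using polynomial growth of $D^2w$ and moments of $\bar u$. For the first piece, $D\phi(u)$ is deterministic, so it suffices to show $|\E\delta|\le C(1+|u|^q)h^2$. To do this, expand $F(t+h,\cdot)$ at $Y_t$. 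By Assumption \ref{ass:time_reg_merged}, $|F(t+h,Y_t)-F(t,Y_t)|\le Ch(1+|u|^l)$. By second-order Taylor with bounded $D_x^2F$ (Assumption \ref{ass:FG-in-H}),
\begin{equation*}
      \bigl|F(t+h,\widetilde Y_{t+h})-F(t+h,Y_t)-D_xF(t+h,Y_t)(\widetilde Y_{t+h}-Y_t)\bigr|\le C|\widetilde Y_{t+h}-Y_t|^2,
\end{equation*}
whose expectation is $O(h)$. It remains to show $|\E(\widetilde Y_{t+h}-Y_t)|\le C(1+|u|^q)h$. This is the one-step analogue of \eqref{eq:Yn1minusYn} and is proved identically. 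One writes $\widetilde Y_{t+h}-Y_t = (\widetilde u-u) + \widehat V(t+h,\widetilde u)-\widehat V(t+h,u) + \widehat V(t+h,u)-\widehat V(t,u)$, expands $\widehat V$ to second order using Lemma \ref{lem:Vhat-spatial-regularity}, and uses that $\E(\widetilde u-u)$ equals $h$ times a linearly growing quantity, because the Brownian term has mean zero. Multiplying by $h\theta|A_t^{-}|$ then gives $|\E\delta|\le C(1+|u|^q)h^2$.

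The hard part is this first-moment estimate for $\delta$. A naive $L^1$ bound only gives $|\delta|\sim h^{3/2}$, which is insufficient. The gain to $h^2$ relies on $D\phi(u)$ being frozen at the deterministic point $u$ and on the mean-zero Brownian increment canceling the leading $h^{1/2}$ part of $\widetilde Y_{t+h}-Y_t$. The other terms follow routinely from the moment and regularity estimates already established.
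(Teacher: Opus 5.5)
Your proposal is correct and follows essentially the same route as the paper. You write $\widetilde u=\bar u+\delta$ with $\delta=h\theta A_t^{-}\bigl(F(t+h,\widetilde Y_{t+h})-F(t,Y_t)\bigr)$, expand $w(t+h,\cdot)$ to second order around $\bar u$, split the linear term at the deterministic point $u$, and combine $\E[|\delta|^{q}]\le C(1+|u|^q)h^{3q/2}$ with $|\E[\delta]|\le C(1+|u|^q)h^2$; the latter comes from a second-order Taylor expansion of $F(t+h,\cdot)$ at $Y_t$ together with the one-step mean-increment bound. The only cosmetic difference is that you derive a pathwise bound on $\delta$ directly instead of quoting Proposition~\ref{lem:moment_Y}, which also tracks the $|u|^{l}$ growth from Assumption~\ref{ass:time_reg_merged} slightly more carefully than the paper does.
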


\begin{proof}
      For simplicity, write $\bar{u} := \bar{u}(t,u;t+h)$ and $\delta := \widetilde{u} - \bar{u}$. Using $f(t,u) = A_t^{-}F(t,Y_t)$ gives 
      \begin{equation}\label{eq:delta-implicit-correction}
            \delta
            =
            h\theta A_t^{-}\big(F(t+h,\widetilde{Y}_{t+h})-F(t,Y_t)\big).
      \end{equation}
      Recall that the local stochastic theta step is simply one step of the full method on $[t,t+h]$, with
      \begin{align*}
            t_n=t,\quad t_{n+1}=t+h,\quad
            Y_n=Y_t:=u+\widehat{V}(t,u),
            \quad Y_{n+1}=\widetilde Y_{t+h}.     
      \end{align*}
      Moreover, in the present local estimate, $u$ is fixed and deterministic. Hence $Y_t = u + \widehat{V}(t,u)$ is also deterministic, and the only randomness comes from the Brownian increment $\Delta{W_{t,h}}$. Applying the increment estimates in Proposition \ref{lem:moment_Y} to this single step shows that for every $q \geq 2$,
      \begin{align}\label{eq:local-Y-increment-Lr}
            \E\big[|\widetilde{Y}_{t+h} - Y_{t}|^{q}\big]
            \leq
            Ch^{\frac{q}{2}}(1 + |Y_t|^{q})
            \leq
            Ch^{\frac{q}{2}}(1 + |u|^{q}),
      \end{align}
      where we have used $|Y_t| \leq |u| + |\widehat{V}(t,u)| \leq C(1+|u|)$  because of the linear growth of $\widehat{V}$. Similarly, Proposition \ref{lem:moment_Y}, applied to the  deterministic initial value $Y_{t}$, further gives
      \begin{equation}\label{eq:local-Y-increment-mean}
            \big|\E\big[\widetilde{Y}_{t+h} - Y_{t}\big]\big|
            \leq
            Ch(1+|Y_t|^2)
            \leq
            Ch(1+|u|^2).
      \end{equation}

      Let us derive two estimates for $\delta$. Using \eqref{eq:delta-implicit-correction}, \eqref{eq:local-Y-increment-Lr}, the boundedness of $A_t^{-}$ and the Lipschitz continuity of $F$ in the space variable and its time-regularity, we obtain that for every $q \geq 2$,
      \begin{align}\label{eq:delta-Lr}
            \E\big[|\delta|^{q}\big]
            \leq&~
            Ch^{q}\E\big[|F(t+h,\widetilde{Y}_{t+h}) - F(t,Y_t)|^{q}\big] \notag
            \\\leq&~
            Ch^{q}\E\big[|F(t+h,\widetilde{Y}_{t+h}) - F(t+h,Y_t)|^{q}\big] \notag
            \\&~+
            Ch^{q}\E\big[|F(t+h,Y_t) - F(t,Y_t)|^{q}\big] \notag
            \\\leq&~
            Ch^{q}\E\big[|\widetilde{Y}_{t+h} - Y_{t}\big|^{q}\big] \notag
            +
            Ch^{2q}(1+|Y_{t}|^{q})  
            \\\leq&~
            C(1 + |u|^{q})h^{\frac{3q}{2}}.
      \end{align}
      Besides, the Taylor formula in the spatial variable implies
      \begin{align*}
            F(t+h,\widetilde{Y}_{t+h}) - F(t+h,Y_t)
            =
            D_xF(t+h,Y_t)(\widetilde{Y}_{t+h}-Y_t) + R_F,
      \end{align*}
      where $R_{F} := \int_0^1(1-\rho) D_x^2F\bigl(t+h,Y_t+\rho(\widetilde{Y}_{t+h}-Y_t)\bigr)
      [\widetilde{Y}_{t+h} - Y_t,\widetilde{Y}_{t+h}-Y_t]\,d\rho$. By \eqref{eq:delta-implicit-correction}, \eqref{eq:local-Y-increment-Lr}, \eqref{eq:local-Y-increment-mean}, the boundedness of $D_{x}^{2}F$ in Assumption \ref{ass:FG-in-H} and Assumption \ref{ass:time_reg_merged}, we have
      \begin{align}\label{eq:delta-mean}
            |\E[\delta]|
            \leq&~
            Ch\big|D_xF(t+h,Y_t)\big|\,\big|\E\big[\widetilde{Y}_{t+h}-{Y}_t)\big]\big| 
            +
            Ch\E\big[|R_{F}|\big] + Ch\big|F(t+h,Y_t) - F(t,Y_t)\big| \notag 
            \\\leq&~
            Ch\big|\E\big[\widetilde{Y}_{t+h} - Y_t\big]\big|
            +
            Ch\E\big[|\widetilde{Y}_{t+h}-Y_t|^{2}\big] + Ch^2(1 + |Y_{t}|) \notag
            \\\leq&~
            C(1+|u|^2)h^2.
      \end{align}

      We now compare $w(t+h,\widetilde{u})$ and $w(t+h,\bar{u})$. The Taylor formula for $w(t+h,\cdot)$ gives
      \begin{align*}
            w(t+h,\widetilde{u}) - w(t+h,\bar{u})
            =
            D_uw(t+h,\bar{u})[\delta] + R_w
      \end{align*}
      with $R_{w} = \int_0^1(1-\rho) D_u^2w(t+h,\bar{u} + \rho\delta)[\delta,\delta]\,d\rho$. Utilizing the decomposition
      \begin{align*}
            D_uw(t+h,\bar u)[\delta]
            =
            D_uw(t+h,u)[\delta] + \bigl(D_uw(t+h,\bar u)-D_uw(t+h,u)\bigr)[\delta]
      \end{align*}
      leads to
      \begin{align}\label{eq:wtildeuminbaru}
            \E\big[w(t+h,\widetilde{u})\big] - \E\big[w(t+h,\bar{u})\big] 
            =&~
            \E\big[D_uw(t+h,u)[\delta]\big] + \E[R_w] \notag
            \\&~+
            \E\big[\big(D_uw(t+h,\bar{u}) - D_uw(t+h,u)\big)[\delta]\big].
      \end{align}
      Let us estimate these three terms separately. For $\E\big[D_uw(t+h,u)[\delta]\big]$,  \eqref{eq:w_spatial_growth} and \eqref{eq:delta-mean} show that
      \begin{equation}\label{eq:first-weak-correction-term}
            \big|\E\big[D_uw(t+h,u)[\delta]\big]\big|
            =
            \big|D_{u}w(t+h,u)\E\big[\delta\big]\big|  
            \leq
            C(1 + |u|^{q})\big|\E\big[\delta\big]\big|  
            \leq
            C(1+|u|^q)h^{2}.
      \end{equation}
      Concerning $\E\big[\big(D_uw(t+h,\bar{u}) - D_uw(t+h,u)\big)[\delta]\big]$, applying \eqref{eq:w_spatial_growth} and the Taylor formula with integral remainder to $x \mapsto D_{u} w(t+h,x)$ leads to
      \begin{align*}
            |D_u w(t+h,\bar u)-D_u w(t+h,u)|
            \leq&~
            |\bar{u}-u|\int_0^1 |D_u^2w(t+h,u+\rho\Delta\bar u)|\,d\rho
            \\\leq&~
            C\bigl(1+|u|^q+|\bar u-u|^q\bigr)|\bar u-u|.
      \end{align*}
      In combination with
      \begin{align}\label{estbaruminur}
            \E\big[|\bar{u}-u|^{q}\big]
            =
            \E\big[|hf(t,u) + g(t,u)\Delta{W_{t,h}}|^{q}\big]
            \leq
            C(1+|u|^{q})h^{\frac{q}{2}}, \quad q \geq 2,
     \end{align}
     one can use the H\"{o}lder inequality and \eqref{eq:delta-Lr}, after increasing $q$ if necessary, to get
     \begin{align}\label{eq:second-weak-correction-term}
           &~\left|\E\big[\big(D_uw(t+h,\bar{u})-D_uw(t+h,u)\big)[\delta]\big]\right| \notag 
           \\\leq&~
           \E\big[|D_uw(t+h,\bar{u})-D_uw(t+h,u)|\,|\delta|\big] \notag
           \\\leq&~
           C(1+|u|^q)\E\big[|\bar{u}-u|\,|\delta|\big]
           +
           C\E\big[|\bar{u}-u|^{q+1}\,|\delta|\big] \notag
           \\\leq&~
           C(1+|u|^q)\big(\E\big[|\bar{u}-u|^{2}\big]\big)^{\frac{1}{2}}
           \big(\E\big[|\delta|^{2}\big]\big)^{\frac{1}{2}}
           +
           C\big(\E\big[|\bar{u}-u|^{2(q+1)}\big]\big)^{\frac{1}{2}}
           \big(\E\big[|\delta|^{2}\big]\big)^{\frac{1}{2}} \notag
           \\\leq&~
           C(1+|u|^q)h^{\frac{1}{2}}h^{\frac{3}{2}}
           +
           C(1+|u|^q)h^{\frac{1+q}{2}}h^{\frac{3}{2}} \notag
           \\\leq&~
           C(1+|u|^{q})h^{2}.
      \end{align}
      To estimate $\E[R_w]$, we use the H\"{o}lder inequality, \eqref{eq:w_spatial_growth} and \eqref{eq:delta-Lr} to deduce that
      \begin{align}\label{eq:remainder-weak-correction}
            \big|\E\big[R_{w}\big]\big|
            \leq&~
            C\E\left[\int_0^1(1-\rho)
            \big(1 + |\bar{u} + \rho\delta|^{q}\big)|\delta|^{2}\,d\rho\right] \notag 
            \\\leq&~
            C\E\left[\big(1 + |\bar{u}|^{q} + |\delta|^{q}\big)|\delta|^{2}\right] \notag
            \\\leq&~
            C\E\left[\big(1 + |u|^{q} + |\bar{u}-u|^{q} 
            + |\delta|^{q}\big)|\delta|^{2}\right] \notag
            \\\leq&~
            C\big(1 + |u|^{q}\big)\E\left[|\delta|^{2}\right]
            +
            C\big(\E\left[|\bar{u}-u|^{2q}\right]\big)^{\frac{1}{2}}
            \big(\E\left[|\delta|^{4}\right]\big)^{\frac{1}{2}}
            +
            C\E\left[|\delta|^{q+2}\right] \notag
            \\\leq&~
            C(1+|u|^{q})h^{3}.
      \end{align}
      Inserting \eqref{eq:first-weak-correction-term}, \eqref{eq:second-weak-correction-term} and \eqref{eq:remainder-weak-correction} into \eqref{eq:wtildeuminbaru} yields the desired result
      and thus complete the proof.
\end{proof}

From the frozen Euler estimate and the implicit correction estimate, we obtain the following one-step weak local error bound.
\begin{proposition}\label{prop:verify_one_step_w_theta}
      Suppose that Assumptions \ref{ass:At-structure}, \ref{ass:sdae-structure}, \ref{ass:coeff}, \ref{ass:FG-in-H} and \ref{ass:time_reg_merged} hold, and let $\theta \in (0,1]$, $h \in (0,h_{0}]$. Then there exist constants $C > 0$ and $q \geq 0$ such that for all $t \in [0,T-h]$ and $u \in \R^{d}$, 
      \begin{equation}\label{eq:verify-one-step-w-theta}
            \left|\E\big[w(t+h,U(t,u;t+h))\big] 
            - \E\big[w(t+h,\widetilde{u}(t,u;t+h))\big]\right|
            \leq
            C(1+|u|^{q})h^{2}.
      \end{equation}
\end{proposition}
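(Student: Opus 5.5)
The estimate \eqref{eq:verify-one-step-w-theta} follows by inserting the frozen Euler reference step $\bar u(t,u;t+h)$ between the exact reduced flow and the induced differential update, and then applying the triangle inequality. Concretely, for fixed $t\in[0,T-h]$ and $u\in\R^{d}$ I would write
\begin{align*}
      &~\E\big[w(t+h,U(t,u;t+h))\big] - \E\big[w(t+h,\widetilde{u}(t,u;t+h))\big]
      \\=&~
      \Big(\E\big[w(t+h,U(t,u;t+h))\big] - \E\big[w(t+h,\bar{u}(t,u;t+h))\big]\Big)
      \\&~+
      \Big(\E\big[w(t+h,\bar{u}(t,u;t+h))\big] - \E\big[w(t+h,\widetilde{u}(t,u;t+h))\big]\Big).
\end{align*}
The first bracket is bounded by $C_1(1+|u|^{q_1})h^2$ by Proposition \ref{prop:frozen_step_weak_error}. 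The second bracket is bounded by $C_2(1+|u|^{q_2})h^2$ by Proposition \ref{lem:implicit_correction_weak}. Both results hold for $\theta\in(0,1]$, $h\in(0,h_0]$ and uniformly in $t\in[0,T-h]$.

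To combine them, set $q:=\max\{q_1,q_2\}$ and use $1+|u|^{q_i}\le 2(1+|u|^{q})$ to absorb both polynomial weights into a single $C(1+|u|^q)$. The resulting constants depend only on the data in the assumptions and on $\psi$, not on $t$, $h$ or $u$, so the sum gives \eqref{eq:verify-one-step-w-theta}.

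One point deserves checking. The map $\widetilde u(t,u;t+h)$ in Theorem \ref{thm:weak_convergence_sdae}(iii) must coincide with the local theta update used in Proposition \ref{lem:implicit_correction_weak}. This holds because, starting from $Y=u+\widehat V(t,u)$ with $u\in\operatorname{Im}(P)$, applying $A_t^{-}$ to one step of \eqref{eq:theta-scheme} gives exactly the fixed-point relation defining $\widetilde u$. This is how \eqref{eq:Deltaun} is derived in Proposition \ref{lem:moment_Y}, and the step is well posed by Proposition \ref{thm:wp_invariance} since $h_0\le h_{\mathrm{wp}}$.

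I expect no real obstacle here, since the substantive work is already done in the two preceding propositions. The only care needed is consistency: the same $p=1$ Kolmogorov function $w$, built from $\psi(u)=\varphi(u+\widehat V(T,u))\in C^4_{\mathrm{pol}}$, must be used in both propositions. That $\psi$ lies in $C^4_{\mathrm{pol}}$ follows from Lemma \ref{lem:Vhat-spatial-regularity} together with the linear growth of $\widehat V$.
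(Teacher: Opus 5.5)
Your proposal is correct and follows the paper's own argument: insert the frozen Euler step $\bar u(t,u;t+h)$, apply the triangle inequality, invoke Propositions~\ref{prop:frozen_step_weak_error} and~\ref{lem:implicit_correction_weak}, and merge the two polynomial weights into a single $C(1+|u|^q)$. Your extra consistency checks, namely that $\widetilde u$ coincides with $PY(t,u+\widehat V(t,u);t+h)$ for $u\in\operatorname{Im}(P)$ and that $\psi\in C^4_{\mathrm{pol}}$, are not part of the paper's proof of this proposition, but they are sound and match what the paper relies on implicitly in Theorem~\ref{thm:weak_order_one_theta}.
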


\begin{proof}
      From the triangle inequality, Propositions \ref{prop:frozen_step_weak_error} and  \ref{lem:implicit_correction_weak}, it follows that
      \begin{align*}
            &~\left|\E\big[w(t+h,U(t,u;t+h))\big]
            -\E\big[w(t+h,\widetilde{u}(t,u;t+h))\big]\right| 
            \\\leq&~
            \left|\E\big[w(t+h,U(t,u;t+h))\big]
            - \E\bigl[w(t+h,\bar{u}(t,u;t+h))\bigr]\right| 
            \\&~
            + \left|\E\big[w(t+h,\bar{u}(t,u;t+h))\bigr]
            - \E\bigl[w(t+h,\widetilde{u}(t,u;t+h))\bigr]\right|
            \\\leq&~
            C(1+|u|^{q})h^{2},
      \end{align*}
      which proves \eqref{eq:verify-one-step-w-theta} and finishes the proof.
\end{proof}

Thus, together with the moment estimates in Lemma \ref{lem:moment_Y}, the one-step weak local error condition in Theorem~\ref{thm:weak_convergence_sdae} is verified with $p = 1$ for the stochastic theta method.

\subsubsection{Weak order one}
We now conclude the weak convergence order of the stochastic theta method. The result follows from the abstract weak convergence theorem with $p = 1$, together with the constraint preservation, moment bounds, and one-step weak local error estimate established above.

\begin{theorem}\label{thm:weak_order_one_theta}
      Suppose that Assumptions \ref{ass:At-structure}, \ref{ass:sdae-structure}, \ref{ass:coeff}, \ref{ass:FG-in-H} and \ref{ass:time_reg_merged} hold, and let $\theta \in (0,1]$, $h \in (0,h_{0}]$. Then there exists a constant $C > 0$, independent of $h$ and $N$, such that for all $\varphi \in C_{\mathrm{pol}}^{4}(\R^{d};\R)$,
      \begin{align*}
            \big|\E\big[\varphi(X_{T})\big] - \E\big[\varphi(Y_{N})\big]\big|
            \leq
            Ch.         
      \end{align*}
\end{theorem}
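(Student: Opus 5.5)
The plan is to apply Theorem~\ref{thm:weak_convergence_sdae} with $p=1$, so that Assumption~\ref{ass:FG-in-H} (with $2p+2=4$) and $\varphi\in C^{4}_{\mathrm{pol}}(\R^{d};\R)$ match the regularity that was used throughout Section~\ref{sec:mainbody}. I then verify conditions (i)--(iv) in turn.

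\textbf{Condition (i).} Since $h\le h_0\le h_{\mathrm{wp}}$, Proposition~\ref{thm:wp_invariance} shows that the stochastic theta method \eqref{eq:theta-scheme} has a unique adapted solution with $\P(Y_n\in\mathcal M_{t_n})=1$ for all $n$. This is exactly condition (i).

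\textbf{Condition (ii).} Proposition~\ref{lem:moment_Y} gives $\E[|Y_n|^q]\le C_q(1+|Y_0|^q)$ for $q\ge2$, uniformly in $n$, $N$ and $h\le h_0$. For $1\le q<2$ the bound follows from the case $q=2$ by Jensen's inequality, so condition (ii) holds.

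\textbf{Condition (iii).} We put the scheme in the one-step form \eqref{eq:one_step_map}--\eqref{eq:numerical_recursion}. Take $\xi_n=\Delta W_n$, which is independent of $\F_{t_n}$ and hence of $Y_0,\dots,Y_n,\xi_0,\dots,\xi_{n-1}$. Let $Y(t,x;t+h)$ be the unique solution $\widetilde Y_{t+h}$ of one theta step started from $x\in\mathcal M_t$. The proof of Proposition~\ref{thm:wp_invariance} shows that this solution is measurable in $(x,\Delta W)$, being the fixed point of a contraction depending measurably on the data. For $Y_n=u_n+\widehat V(t_n,u_n)$, applying $A_{t_n}^-$ to the scheme, as in \eqref{eq:Deltaun}, shows that $u_{n+1}=PY_{n+1}$ solves the local theta step defining $\widetilde u(t_n,u_n;t_{n+1})$. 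That step has a unique fixed point by the contraction argument, so $u_{n+1}=\widetilde u(t_n,u_n;t_{n+1})$, which is condition (iii).

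\textbf{Condition (iv).} With $p=1$ the required bound is $C(1+|u|^q)h^{2}$, and this is precisely Proposition~\ref{prop:verify_one_step_w_theta}. One technical point must be checked here. In Theorem~\ref{thm:weak_convergence_sdae} the function $w$ is built from $\psi(u)=\varphi(u+\widehat V(T,u))$, and Lemma~\ref{lem:w_regularity} needs $\psi\in C^4_{\mathrm{pol}}$. By the chain rule (Fa\`a di Bruno), Lemma~\ref{lem:Vhat-spatial-regularity} (bounded $D_u^j\widehat V$ for $j=1,\dots,4$) and the linear growth of $\widehat V$ from Lemma~\ref{lem:Vhat_basic}, every derivative of $\psi$ up to order four has polynomial growth. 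So $\psi\in C^4_{\mathrm{pol}}$ and Lemma~\ref{lem:w_regularity} applies.

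\textbf{Conclusion.} Theorem~\ref{thm:weak_convergence_sdae} now gives $|\E[\varphi(X_T)]-\E[\varphi(Y_N)]|\le C(1+|Y_0|^q)h$. Since $Y_0=X_0$ is deterministic and fixed, the factor $1+|X_0|^q$ can be absorbed into $C$.

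\textbf{Main obstacle.} The hard work, the local weak error estimate, has already been done. The delicate point is condition (iii): the abstract theorem's one-step map must be well defined for arbitrary $u$ and must coincide with the scheme's differential update. For arbitrary $u$, $x=u+\widehat V(t,u)$ lies on $\mathcal M_t$, so Proposition~\ref{thm:wp_invariance} applies verbatim to one step started from $x$. The constants obtained depend on $\varphi$ and $X_0$ but not on $h$ or $N$.
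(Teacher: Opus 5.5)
Your proposal is correct and matches the paper's proof: both apply Theorem~\ref{thm:weak_convergence_sdae} with $p=1$, taking (i) from Proposition~\ref{thm:wp_invariance}, (ii) from Proposition~\ref{lem:moment_Y}, (iii) from the identification $u_{n+1}=PY_{n+1}=\widetilde u(t_n,u_n;t_{n+1})$, and (iv) from Proposition~\ref{prop:verify_one_step_w_theta}. Your extra checks fill in details the paper leaves implicit: Jensen's inequality for $1\le q<2$, the fact that $\psi\in C^4_{\mathrm{pol}}$ via Lemma~\ref{lem:Vhat-spatial-regularity}, and absorbing $1+|X_0|^q$ into a constant that depends on $\varphi$.
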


\begin{proof}
      Conditions \textup{(i)} and \textup{(ii)} in Theorem \ref{thm:weak_convergence_sdae} are ensured by Propositions \ref{thm:wp_invariance} and \ref{lem:moment_Y}, respectlvely. Besides, Proposition \ref{thm:wp_invariance} implies that $Y_{n} = u_{n} + \widehat{V}(t_n,u_n)$ for all $n = 0,1,\cdots,N$, which together with the one-step recursion $Y_{n+1} = Y(t_{n},Y_{n};t_{n+1})$ yields
      \begin{align*}
            Y_{n+1} = Y(t_{n},u_{n} + \widehat{V}(t_n,u_n);t_{n+1}), \quad n = 0,1,\cdots,N-1.
      \end{align*}
      Applying $P$ to both sides yields
      \begin{align*}
            u_{n+1} = PY_{n+1} = PY(t_{n},u_{n} + \widehat{V}(t_n,u_n);t_{n+1})
            = \widetilde{u}(t_n,u_n;t_{n+1}), \quad n = 0,1,\cdots,N-1,
      \end{align*} 
      which verifies Condition \textup{(iii)} in Theorem \ref{thm:weak_convergence_sdae}. Moreover, Proposition~\ref{prop:verify_one_step_w_theta} validates the one-step weak local error condition, namely Condition \textup{(iv)}, with $p = 1$. Therefore all assumptions of Theorem \ref{thm:weak_convergence_sdae} are satisfied. Applying that theorem with $p = 1$ finally completes the proof.
\end{proof}

\section{Numerical experiments}\label{sec:experiments}
This section presents a series of numerical experiments for two concrete examples to illustrate the previous theoretical results, including the structure-preserving property and the weak convergence order of the stochastic theta method. Before proceeding, we explain how \(Y_{n+1}\in\mathcal M_{t_{n+1}}\) is obtained from a given value \(Y_n\in\mathcal M_{t_n}\) through the stochastic theta method \eqref{eq:theta-scheme}. Since $A_{t_n}$ is singular, one cannot solve the method by inverting $A_{t_n}$. Instead, $Y_{n+1}$ is obtained as the solution of the nonlinear system
\begin{equation*}
      \mathcal{R}_n(y)
      =
      A_{t_n}y - A_{t_n}Y_{n}
      -
      h\big((1-\theta)F(t_n,Y_n) + \theta F(t_{n+1},y)\big) 
      -
      G(t_n,Y_n)\Delta W_n, \quad y \in \R^{d}.
\end{equation*}
Then $Y_{n+1}$ is obtained by solving $\mathcal{R}_{n}(Y_{n+1}) = 0$ via the Newton iteration with precision $10^{-5}$. Moreover, the solution $Y_{n+1}$ obtained in this way automatically satisfies the algebraic constraint at the new time level, i.e., $Y_{n+1} \in \mathcal{M}_{t_{n+1}}$. Indeed, applying $R$ to $\mathcal{R}_{n}(Y_{n+1}) = 0$ and using $RA_{t_n} = 0$, $RG(t_n,Y_n) = 0$, and $RF(t_n,Y_n) = 0$, we obtain $h\theta RF(t_{n+1},Y_{n+1}) = 0$. It follows from $\theta > 0$ that $RF(t_{n+1},Y_{n+1}) = 0$, i.e., $Y_{n+1} \in \mathcal M_{t_{n+1}}$.

In what follows, all expectation-type quantities in the numerical experiments are approximated
by the Monte Carlo method with $M = 10^{4}$ different Brownian sample paths. Given a stepsize $h > 0$ and  $T = Nh$ for some $N \in \N$, let $Y_{n}^{(h,m)} \approx X(t_{n}), n = 0, 1, \cdots, N$ denote the numerical solutions generated by the stochastic theta method along the $m$-th Brownian sample path. To illustrate the constraint-preserving property, we report the Monte Carlo root-mean-square constraint residual at each time level as follows
\begin{equation*}
      \operatorname{Res}(t_n)
      :=
      \left(\frac{1}{M}\sum_{m=1}^{M}
      \big|RF(t_n,Y_n^{(h,m)})\big|^2\right)^{1/2},
      n = 0, 1, \cdots, N.
\end{equation*}
We next describe how the weak errors are computed at the final time $T$. Since the exact solution is generally unavailable, we use a reference approximation  computed with a sufficiently small stepsize $h_{\mathrm{ref}}$ along the $m$-th Brownian sample path, denoted by $Y_{T}^{(h_{\mathrm{ref}},m)}$. For any test function $\varphi \in C_{\mathrm{pol}}^{4}(\R^{d};\R)$, the weak error at time $T$ is approximated by the following Monte Carlo estimator
\begin{equation*}
      \operatorname{Err}(T)
      :=
      \left|\frac{1}{M}\sum_{m=1}^{M}
      \left(\varphi\big(Y_{T}^{(h_{\mathrm{ref}},m)}\big)
      -
      \varphi\big(Y_T^{(h,m)}\big)\right)\right|,
\end{equation*}
where the observable $\varphi$ is chosen from the following four scalar test functionals
\begin{equation*}
      \varphi_{1}(x) := \sum_{i=1}^{d}x_{i},\quad
      \varphi_{2}(x) := \sum_{i=1}^{d}x_{i}^{2},\quad
      \varphi_{3}(x) := \cos\left(\sum_{i=1}^{d}x_{i}\right),\quad
      \varphi_{4}(x) := \frac{1}{1+\sum_{i=1}^{d}x_{i}^{2}}
\end{equation*}
for any $x = (x_{1},\cdots,x_{d})^{\top} \in \R^{d}$.

\begin{example}\label{ex:smib}
{\rm{
      Consider a reduced single-machine infinite-bus (SMIB) model with stochastic load 
      \begin{equation}\label{eq:SMIB}
      \left\{\begin{aligned}
            d\delta_t
            &= (\omega_t-\omega_s)\,dt, 
            \\
            d\omega_t
            &= \frac{\omega_s}{2H}
            \bigl(T_m-T_{e,t}-D(\omega_t-\omega_s)\bigr)\,dt, 
            \\
            d\eta_t
            &= -\alpha \eta_t\,dt+\beta\,dW_t, 
            \\
            0
            &= \kappa\sin\delta_t+P_L(1+\rho\eta_t)-T_{e,t} 
      \end{aligned}\right.
      \end{equation}
      with the initial value $\eta_{0} = 0$, $\omega_{0} = \omega_{s}$, $T_{e,0} = T_{m}$ and $\delta_{0} = \arcsin((0.8-0.3)/1.5825)$, where $\{W_t\}_{t\in[0,T]}$ is a standard real-valued Brownian motion; see \cite{milano2013systematic} for the precise physical interpretation of $\delta_{t}$, $\omega_{t}$, $\eta_{t}$ and $T_{e,t}$. Here we take $T_m=0.8$, $P_L=0.3$, $H=3.0$, $\omega_s=120\pi\,\mathrm{rad/s}$, $\kappa=1.5825$, $D=1.0610\times10^{-3}$, $\alpha=1.0$, $\beta=0.2$ and $\rho=0.3$; see \cite[Problem 8.4]{SauerPaiChow2017}. Letting $X_{t} := (\delta_t, \omega_t, \eta_t, T_{e,t})^{\top}$ enables us to rewrite \eqref{eq:SMIB} as an SDAE $A\,dX_t = F(X_t)\,dt + G\,dW_t$ with
      \begin{equation*}
            A=
            \begin{pmatrix}
                  1&0&0&0\\
                  0&2H/\omega_s&0&0\\
                  0&0&1&0\\
                  0&0&0&0
            \end{pmatrix},
            \quad
            F(x)=
            \begin{pmatrix}
                  x_2-\omega_s\\
                  T_m-x_4-D(x_2-\omega_s)\\
                  -\alpha x_3\\
                  \kappa\sin x_1+P_L(1+\rho x_3)-x_4
            \end{pmatrix},
            \quad
            G=
            \begin{pmatrix}
                  0\\0\\ \beta\\0
            \end{pmatrix}.
      \end{equation*}
      This example is a constant-matrix special case of the SDAEs framework considered above. The diffusion is compatible with the algebraic constraint, the algebraic equation $\kappa\sin x_1 + P_L(1 + \rho x_3) - x_4 = 0$ is uniquely solvable with respect to $x_4$, and the coefficients are smooth with bounded derivatives of all required orders. Therefore, Assumptions \ref{ass:At-structure}, \ref{ass:sdae-structure}, \ref{ass:coeff}, \ref{ass:FG-in-H} and \ref{ass:time_reg_merged} are satisfied, and the weak convergence result in Theorem \ref{thm:weak_order_one_theta} applies.

      \begin{figure}[!htbp]
      \begin{center}
            \subfigure[$T=50$]
            {\includegraphics[width = 4cm, height = 3cm]
            {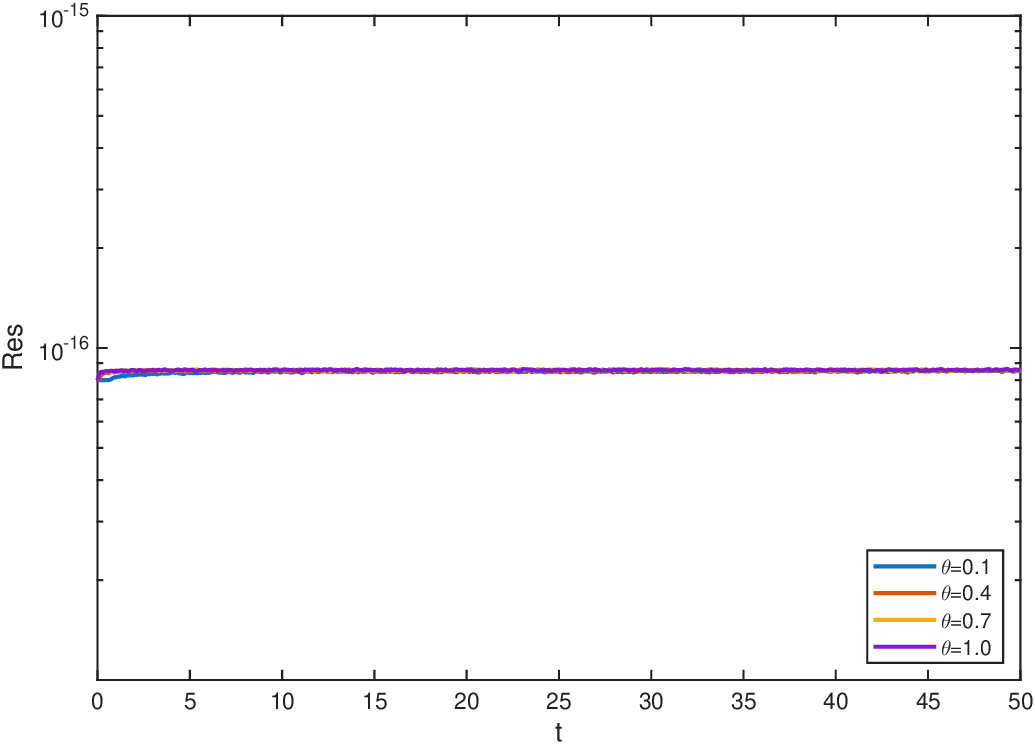}}
            \quad
            \subfigure[$T=100$]
            {\includegraphics[width = 4cm, height = 3cm]
            {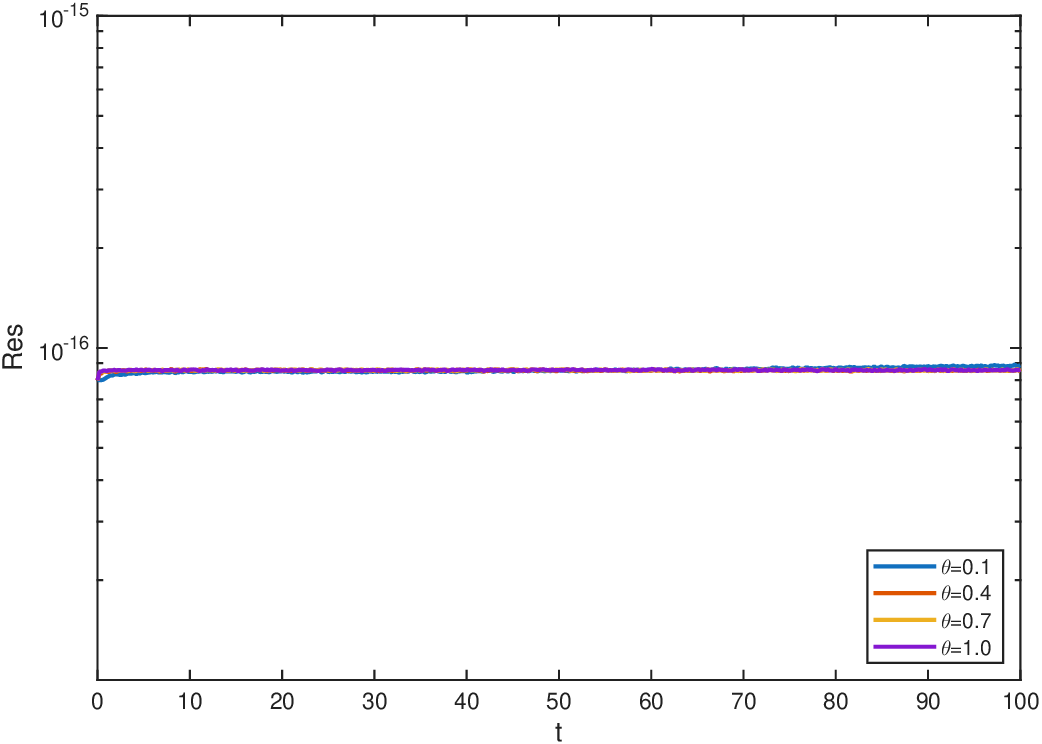}}
            \quad
            \subfigure[$\theta=0.1$]
            {\includegraphics[width = 4cm, height = 3cm]
            {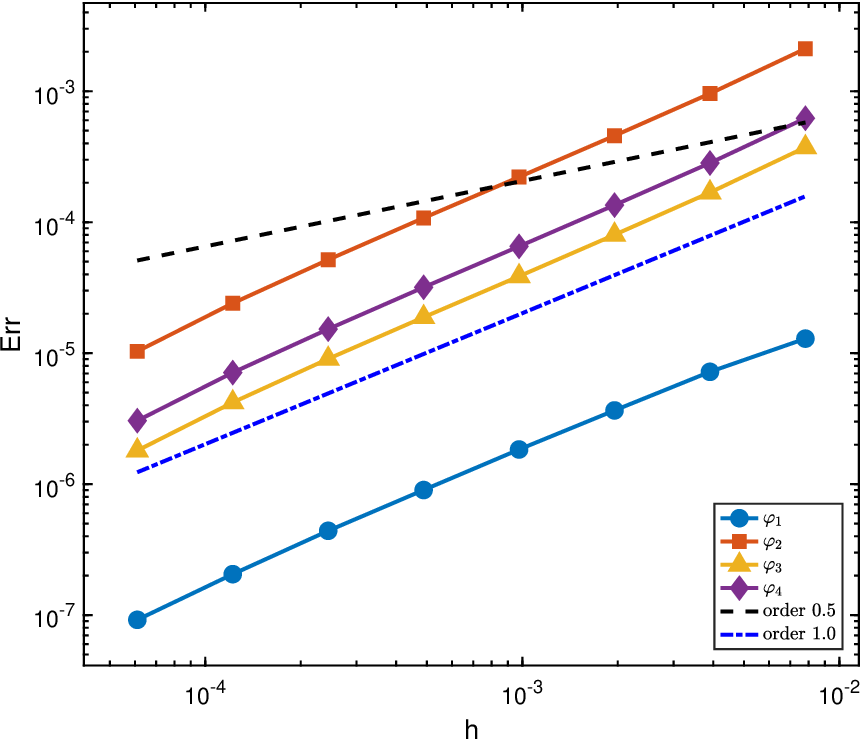}}
            \\
            \subfigure[$\theta=0.4$]
            {\includegraphics[width = 4cm, height = 3cm]
            {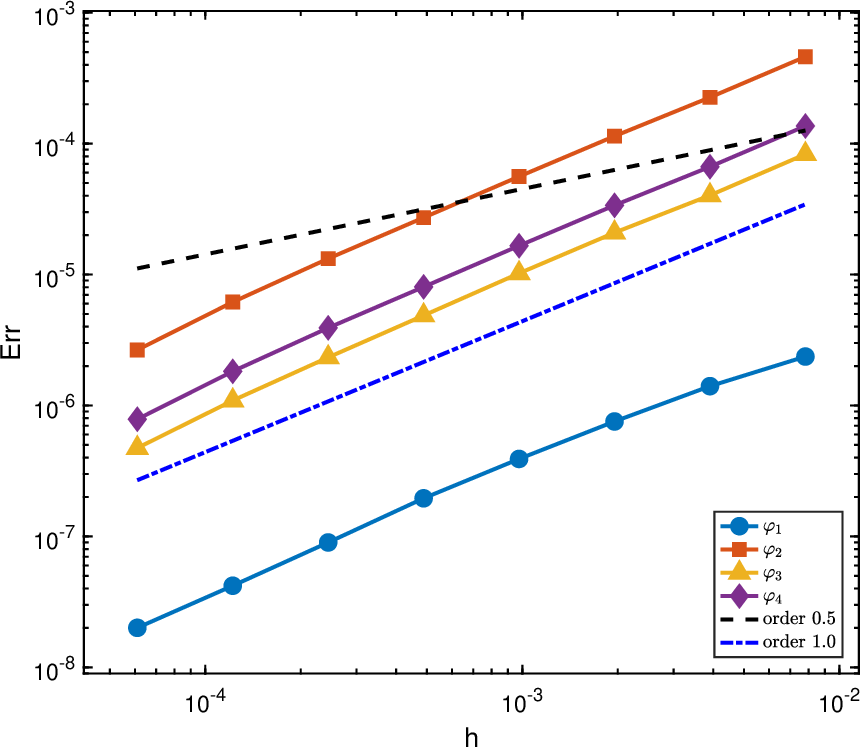}}
            \quad
            \subfigure[$\theta=0.7$]
            {\includegraphics[width = 4cm, height = 3cm]
            {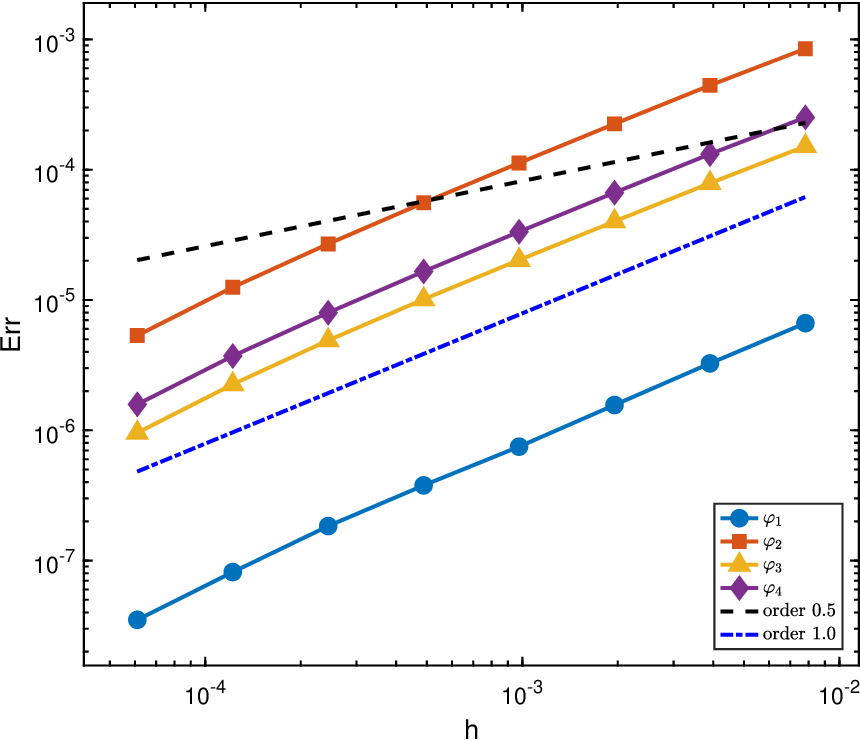}}
            \quad
            \subfigure[$\theta=1.0$]
            {\includegraphics[width = 4cm, height = 3cm]
            {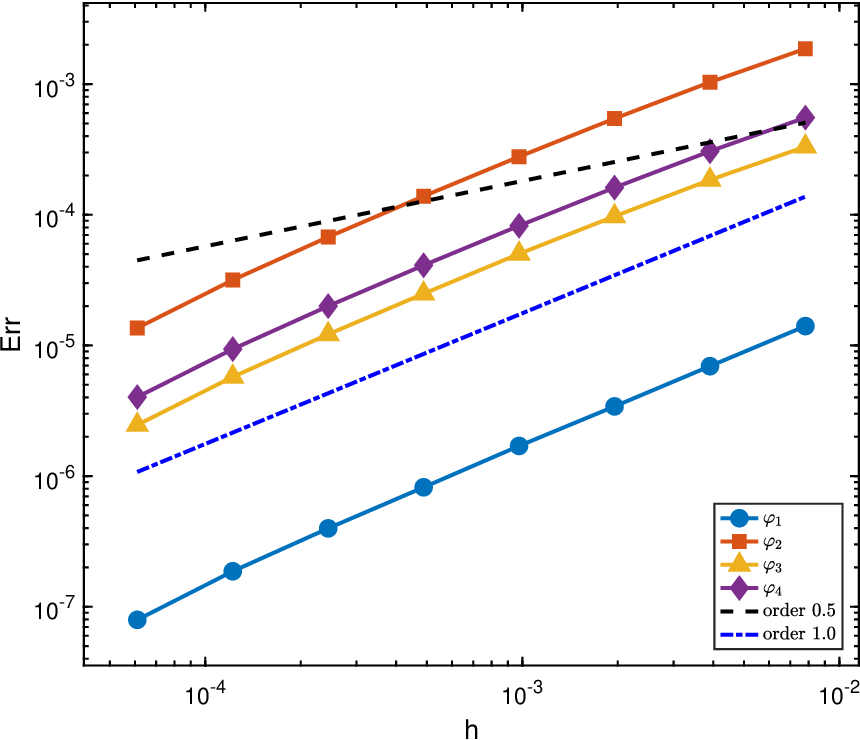}}
            \caption{structure-preserving simulations in (a)--(b) and weak convergence orders in (c)--(f)}
            \label{fig:example1}
      \end{center}
      \end{figure}

      Figure \ref{fig:example1} illustrates the constraint-preserving property and the weak convergence behaviour of the stochastic theta method. Panels (a) and (b) show the Monte Carlo root-mean-square residuals of the algebraic constraint for $T = 50$ and $T = 100$, respectively, with $h = 2^{-10}$. For all tested values of $\theta$, the residuals remain close to the round-off level throughout the simulations, confirming that the proposed method preserves the algebraic constraint over both short and relatively long time intervals. Panels (c)--(f) of Figure~\ref{fig:example1} report the weak errors for $T = 1$, $h_{\mathrm{ref}} = 2^{-16}$ and $h = 2^{-14},2^{-13},\ldots,2^{-7}$. For $\theta = 0.1, 0.4, 0.7, 1.0$, the weak errors corresponding to the four test functionals decrease approximately linearly with respect to $h$ on the log-log scale. The observed slopes agree with the reference line of order one, in accordance with Theorem \ref{thm:weak_order_one_theta}. These results indicate that the stochastic theta method achieves weak order one while preserving the algebraic constraint structure of the SDAE.
}}
\end{example}

\begin{example}\label{ex:td_singular}
{\rm{
      Let $\{W_t\}_{t \in [0,T]}$ be a standard real-valued Brownian motion and consider       \begin{equation}\label{eq:ex3}
            A_t\,dX_t = F(t,X_t)\,dt + G(t,X_t)\, dW_t,
            \qquad t \in [0,T]
      \end{equation}
      with the initial value $X_{0} = (\gamma, 1 + \eta\tanh(\gamma) + \chi)^{\top}$, where $A_{t} = 
      \big(1 + \frac12\sin t\big)\begin{pmatrix} 1&0 \\ 0&0 \end{pmatrix}$ and
      \begin{equation*}
            F(t,x)=
            \binom{\lambda x_1+\beta\tanh(x_1)+\mu\sin t}
            {x_2-1-\eta\tanh(x_1)-\chi\cos t},
            \quad
            G(t,x)=
            r(1+\nu\cos t)\frac{x_2}{1+x_2^2}
            \binom{1}{0}.
      \end{equation*}
      In the simulation, we take $\gamma = 1$, $\eta = 0.8$, $\chi = 0.3$, $\lambda = 0.4$, $\beta = 1.2$, $\mu = 0.2$, $r = 0.9$ and $\nu = 0.2$. For this example, the singular matrix $A_{t}$ varies in time while its image and kernel remain fixed. Moreover, the diffusion is compatible with the algebraic constraint, and the algebraic equation $x_2-1-\eta\tanh(x_1)-\chi\cos t = 0$ is explicitly solvable with respect to the algebraic variable $x_2$. The coefficients are smooth and satisfy the required growth and regularity conditions. Hence Assumptions \ref{ass:At-structure}, \ref{ass:sdae-structure}, \ref{ass:coeff}, \ref{ass:FG-in-H} and \ref{ass:time_reg_merged} are fulfilled, and Theorem \ref{thm:weak_order_one_theta} thus holds. Under the same numerical setting as in Example \ref{ex:smib}, Figure \ref{fig:example2} again shows that the stochastic theta method preserves the algebraic constraint and achieves weak convergence of order one.

      \begin{figure}[!htbp]
      \begin{center}
            \subfigure[$T=50$]
            {\includegraphics[width = 4cm, height = 3cm]
            {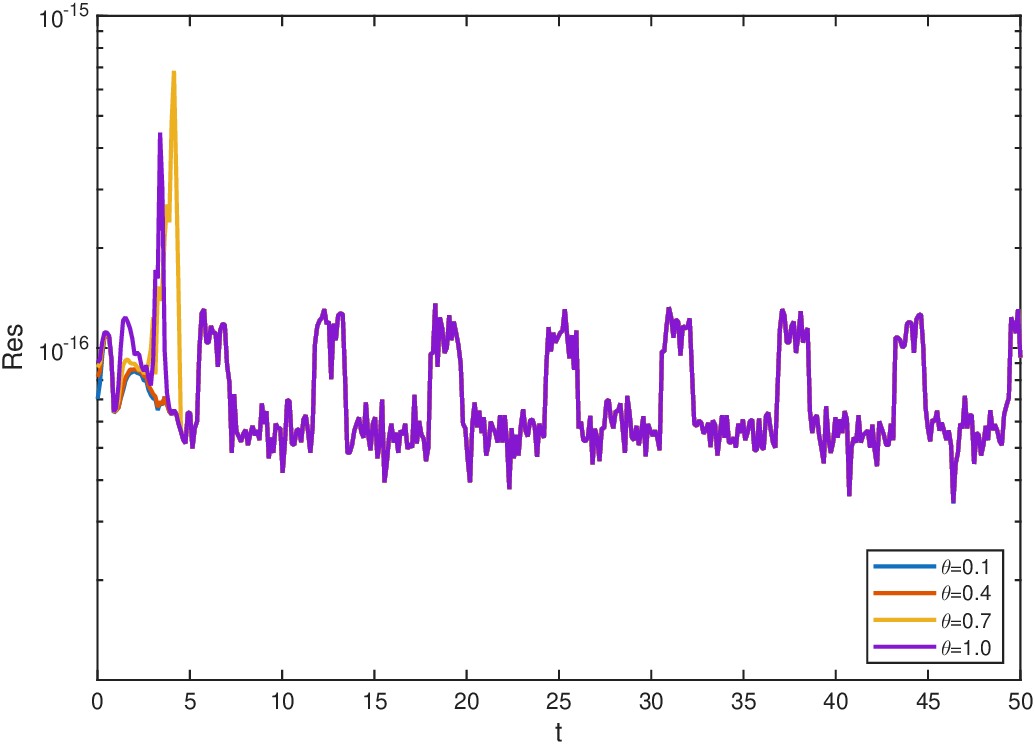}}
            \quad
            \subfigure[$T=100$]
            {\includegraphics[width = 4cm, height = 3cm]
            {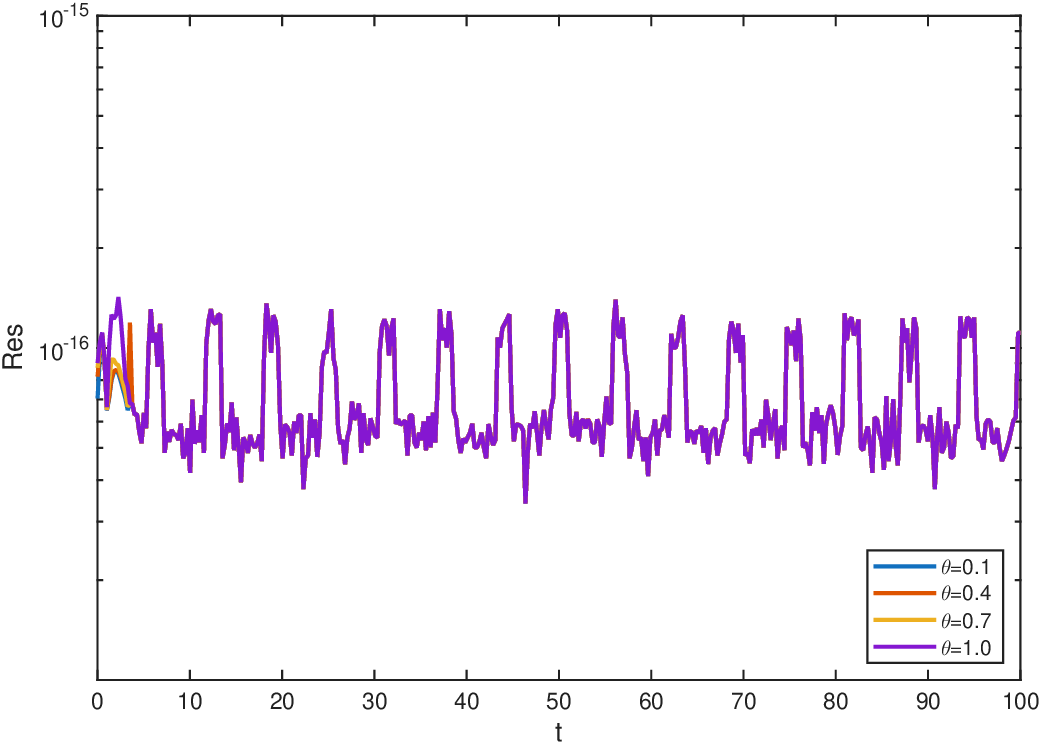}}
            \quad
            \subfigure[$\theta=0.1$]
            {\includegraphics[width = 4cm, height = 3cm]
            {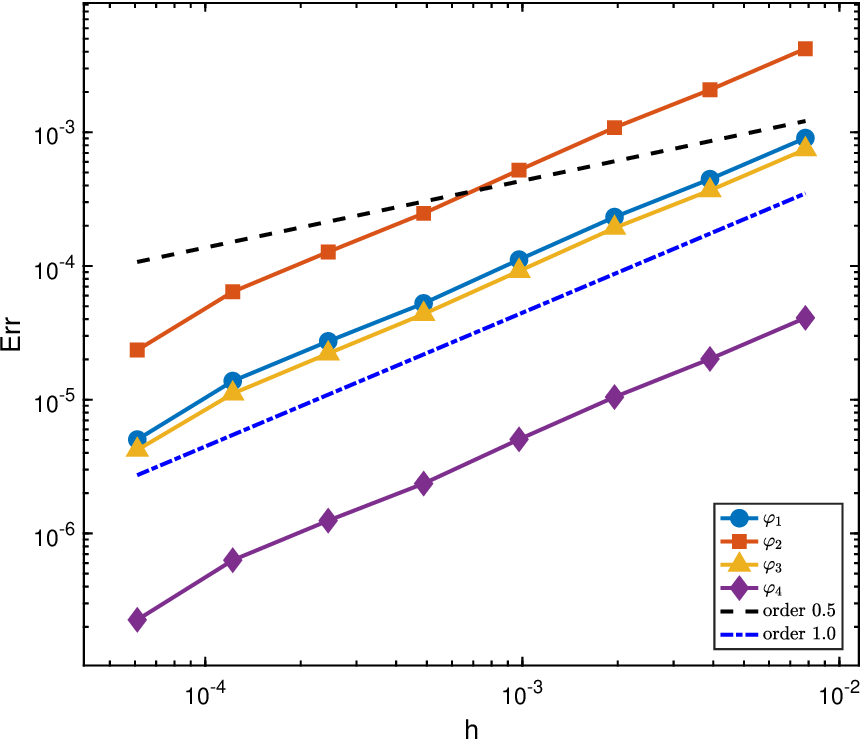}}
            \\
            \subfigure[$\theta=0.4$]
            {\includegraphics[width = 4cm, height = 3cm]
            {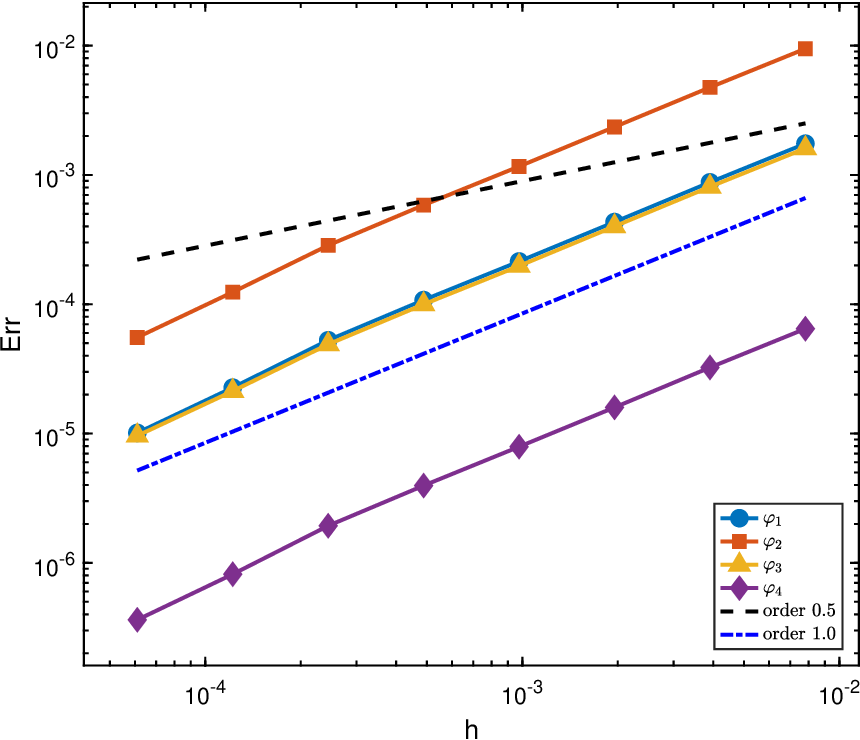}}
            \quad
            \subfigure[$\theta=0.7$]
            {\includegraphics[width = 4cm, height = 3cm]
            {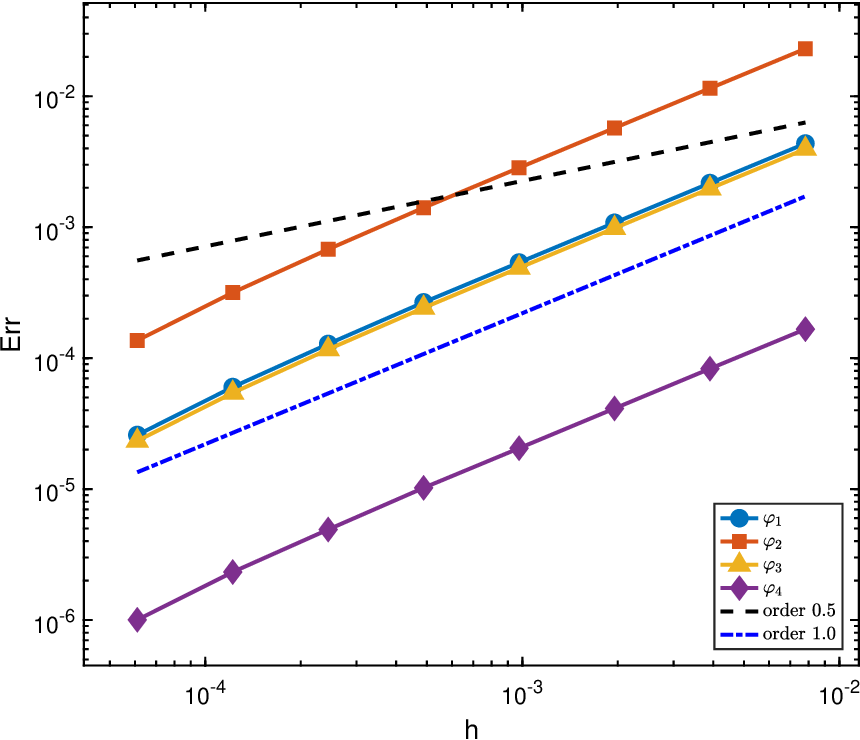}}
            \quad
            \subfigure[$\theta=1.0$]
            {\includegraphics[width = 4cm, height = 3cm]
            {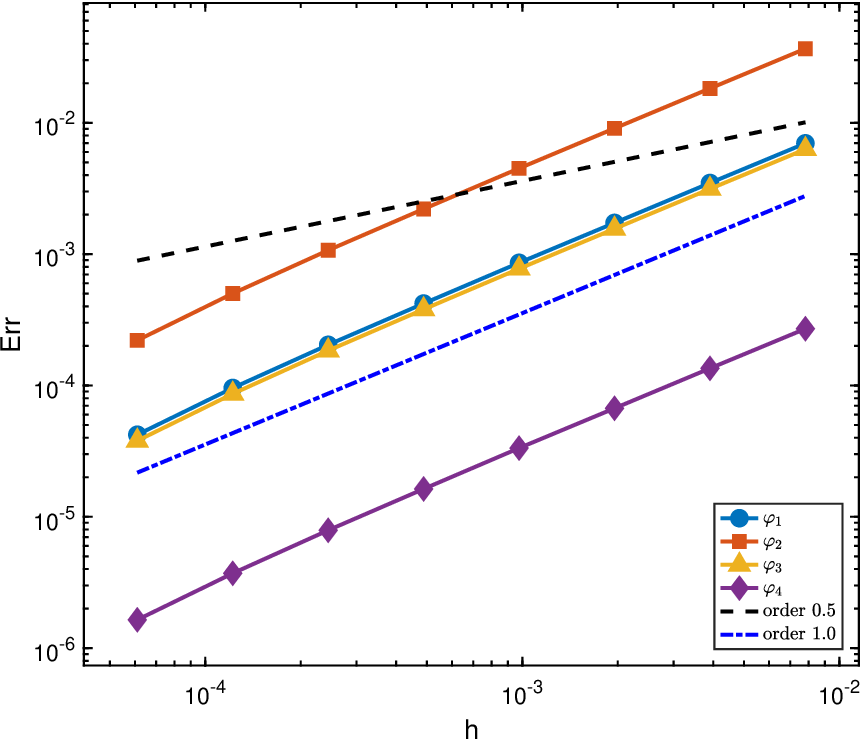}}
            \caption{structure-preserving simulations in (a)--(b) and weak convergence orders in (c)--(f)}
            \label{fig:example2}
      \end{center}
      \end{figure}
}}
\end{example}

\section{Conclusion and future work}
In this paper, we investigated weak convergence of structure-preserving stochastic theta methods for index-$1$ SDAEs with time-dependent singular matrices and a fixed differential-algebraic splitting. Based on the algebraic-differential decomposition, we established an abstract weak convergence theorem for constraint-preserving one-step approximations and applied it to the stochastic theta method with $\theta \in (0,1]$. Under global Lipschitz, linear growth, and suitable smoothness assumptions, we proved
that the method is well posed, preserves the algebraic constraints at each time level, and converges with weak order one. Numerical experiments confirmed the constraint-preserving property and the theoretical convergence order. These results provide a weak convergence framework that links the SDAE-level constraint-preserving discretization with the weak error analysis of the induced differential component.

Several directions deserve further study. One natural problem is to extend the present analysis, still under the fixed differential-algebraic splitting, to SDAEs with non-globally Lipschitz coefficients. This would require new moment estimates and local weak error arguments, and may lead to tamed, truncated, or
fully implicit structure-preserving methods. Another important direction is to treat more general time-dependent singular matrices for which the differential and algebraic subspaces are themselves time-dependent. In that case, the projectors vary with time and additional terms may appear in the reduced
dynamics, making both constraint preservation and weak convergence analysis more delicate.

\bibliographystyle{plain}	
\bibliography{refs}
\end{document}